\tikzstyle{box} = [rectangle,text centered, draw=black]
\tikzstyle{arrow} = [thick, ->, >=stealth]
\newtheorem{theorem}{Theorem}[section]
\newtheorem{lemma}[theorem]{Lemma}
\newtheorem{proposition}[theorem]{Proposition}
\newtheorem{corollary}[theorem]{Corollary}
\theoremstyle{definition}
\numberwithin{equation}{section}
\begin{document}
\UseRawInputEncoding
\arraycolsep=1pt

\title{\bf On odd-normal numbers   
\footnotetext{\hspace{-0.35cm} 2020 {\it
Mathematics Subject Classification}.
Primary 42A63, 28A78, 11K16; Secondary 42A38, 28A80, 11K36.
\endgraf {\it Key words and phrases}.
Fourier analysis, Fourier series, trigonometric series,  sets of uniqueness and of multiplicity, normal and non-normal numbers, Rajchman measures, metric number theory 
\endgraf
}}
\author{Malabika Pramanik and Junqiang Zhang}  
\date{ }
\maketitle
\newcommand{\Addresses}{{
  \bigskip
  \footnotesize

  J.~Zhang, \textsc{School of Science, China University of Mining and Technology-Beijing, Beijing 100083, P.~R.~China}\par\nopagebreak
  \textit{E-mail address}: \texttt{jqzhang@cumtb.edu.cn}

  \medskip

  M.~Pramanik, \textsc{Department of Mathematics, 1984 Mathematics Road, University of British Columbia, Vancouver, Canada V6T 1Z2} \par\nopagebreak
  \textit{E-mail address}: \texttt{malabika@math.ubc.ca}

%

}}

\vspace{-0.8cm}

\begin{center}
\begin{minipage}{13cm}
{\small {\bf Abstract.} A real number $x$ is considered normal in an integer base $b \geq 2$ if its digit expansion in this base is ``equitable'', ensuring that for each $k \geq 1$, every ordered sequence of $k$ digits from $\{0, 1, \ldots, b-1\}$ occurs in the digit expansion of $x$ with the same limiting frequency. Borel's classical result \cite{b09} asserts that Lebesgue-almost every $x \in \mathbb R$ is normal in every base $b \geq 2$. 
\vskip0.1in 
\noindent This paper serves as a case study of the measure-theoretic properties of Lebesgue-null sets containing numbers that are normal only in certain bases.  We consider the set $\mathscr N(\mathscr{O}, \mathscr{E})$ of reals that are normal in odd bases but not in even ones. This set has full Hausdorff dimension \cite{p81} but zero Fourier dimension. The latter condition means that $\mathscr N(\mathscr{O}, \mathscr{E})$ cannot support a probability measure whose Fourier transform has power decay at infinity. Our main result is that $\mathscr N(\mathscr{O}, \mathscr{E})$ supports a Rajchman measure $\mu$, whose Fourier transform $\widehat{\mu}(\xi)$ approaches 0 as $|\xi| \rightarrow \infty$ by definiton, albeit slower than any negative power of $|\xi|$. Moreover, the decay rate of $\widehat{\mu}$ is essentially optimal, subject to the constraints of its support. The methods draw inspiration from the number-theoretic results of Schmidt \cite{s60} and a construction of Lyons \cite{l86}.
\vskip0.1in
\noindent  As a consequence, $\mathscr N(\mathscr{O}, \mathscr{E})$ emerges as a set of multiplicity, in the sense of Fourier analysis. This addresses a question posed by Kahane and Salem \cite{Kahane-Salem-64} in the special case of $\mathscr N(\mathscr{O}, \mathscr{E})$.
}    
%
\end{minipage}
\end{center}

\tableofcontents
\section{Introduction}
Given a number $x \in [0,1)$ and an integer $b \in \{2, 3, \ldots \} = \mathbb N \setminus \{1\}$, let $x = 0.x_1x_2 \cdots$ denote the digit expansion of $x$ with respect to base $b$. We say that $x$ is {\em{normal}} in this base (or {\em{$b$-normal}} for short) if for any $k \geq 1$, every ordered sequence of $k$ digits from $\{0, 1, \ldots, b-1\}$ occurs equally often in its digit expansion; specifically, for every block of digits $(d_1, \ldots, d_k) \in \{0, 1, \ldots, b-1\}^k$,  
  \[ \lim_{N \rightarrow \infty} \frac{1}{N} \#\Bigl\{1 \leq n \leq N : (x_n, x_{n+1}, \ldots, x_{n+k-1}) = (d_1, \ldots, d_k) \Bigr\} = \frac{1}{b^k}. \] 
The research area surrounding normal numbers is vast and diverse. Different perspectives are reflected in Koksma \cite{Koksma-book}, Niven \cite{Niven-book}, Salem \cite{Salem-book}, Kuipers and Niederreiter \cite{KN-book},  Bugeaud \cite{b12}, and in the bibliography therein. This article focuses on a set of limited normality, in the sense described below. 
\vskip0.1in 
\noindent A classical theorem of \'E.~Borel \cite{b09} says that Lebesgue-almost every $x \in [0,1)$ is normal in every base $b \geq 2$. In view of the Fourier decay property of Lebesgue absolutely continuous measures on $[0,1)$, Kahane and Salem \cite{Kahane-Salem-64} posed the following question: does a similar statement hold for {\em{any}} Borel measure on $[0, 1)$ whose Fourier coefficients vanish at infinity? Such measures are known in the literature as {\em{Rajchman measures}}. So their question can be reformulated as: 
\begin{equation} \label{KS-q} 
{\text{\em{Can a set of non-normal numbers support a Rajchman measure?}}}
\end{equation} 
Question \eqref{KS-q} was answered in the affirmative by Lyons \cite{l86} for the set of numbers that are not 2-normal. This is equivalent to a negative answer for the original question of Kahane and Salem \cite{Kahane-Salem-64} for this set. But the problem remains open in full generality for sets of numbers that are normal in certain bases, and not normal in others. The sharper version of this question appears in \eqref{KS-q2} below. The main objective of this article is to establish an affirmative answer to the question \eqref{KS-q} for the set of numbers that are odd-normal but not even-normal. 
\vskip0.1in
\noindent Apart from its obvious connection with the theory of measures and numbers, question \eqref{KS-q} is important in Fourier analysis. A set $E \subseteq [0,1)$ is a {\em{set of uniqueness}} if any complex-valued trigonometric series of the form 
\begin{equation*} 
\sum_{n \in \mathbb Z} a_n e(nx), \qquad e(y) := e^{2 \pi i y}
\end{equation*} 
that converges to zero on $[0,1) \setminus E$ must be identically zero, i.e, $a_n = 0$ for all $n \in \mathbb Z$. If $E$ is not a set of uniqueness, it is called a {\em{set of multiplicity}}. With a rich history rooted in the early works of Riemann \cite{Riemann} and Cantor \cite{Cantor} and spanning more than a century, these sets have been studied in a variety of settings \cite{{Young}, {Menshov}, {Salem}, {Zygmund}, {Salem-Zygmund}, {Bary}}. A comprehensive account may be found in the book of Kechris and Louveau  \cite{KL-book}. More recently, spurred by advances in ergodic theory, fractal geometry and metrical number theory, properties of uniqueness have been examined for self-similar fractal sets \cite{{Li-Sahlsten}, {Varju-Yu}, {Bremont}, {Algom-Hertz-Wang}, {Gao-Ma-Song-Zhang}, {Rapaport}} and for number-theoretic sets occurring in Diophantine approximation \cite{{Lyons-thesis}, {Lyons-85}, {l86}, {b12}, {Bluhm}}.  The connection between sets of uniqueness and the Rajchman property is described in \cite[Chapter 2, \S4-5]{KL-book}. The following theorem \cite[Chapter 2, Theorem 4.1]{KL-book}, attributed to the combined work of Piatetski-Shapiro \cite{{P-S1}, {P-S2}} and Kahane and Salem \cite{Kahane-Salem-63}, is particularly relevant for this article: {\em{if $E$ supports a Rajchman measure, then $E$ is a set of multiplicity}}. Thus, an affirmative answer to question \eqref{KS-q} for a set of non-normal numbers would automatically identify it as a set of multiplicity.            

\subsection{Definitions on normality} 
Let us introduce the following notation. For $\mathscr{B}, \mathscr{B}' \subseteq \mathbb N \setminus \{1\}$, we define 
\begin{align}  
\mathscr N(\mathscr B, \cdot) &:=  \Bigl\{x \in \mathbb R: x \text{ is $b$-normal for all } b \in \mathscr B \Bigr\}, \label{N1} \\  
\mathscr N(\cdot, \mathscr B') &:=  \Bigl\{x \in \mathbb R: x \text{ is not $b'$- normal for any } b' \in \mathscr B' \Bigr\}, \label{N2}  \\
\mathscr N(\mathscr B, \mathscr B') &:= \left\{x \in \mathbb R \; \Biggl| \;  \begin{aligned} &x \text{ is normal in base } b \text{ for all } b \in \mathscr B, \\  &x \text{ is non-normal in base } b' \text{ for all } b' \in \mathscr B' \end{aligned}  \right\}. \label{N3} 
\end{align}  
Clearly $\mathscr N(\mathscr B, \mathscr B') = \mathscr N(\mathscr B, \cdot) \cap \mathscr N(\cdot, \mathscr B')$. 
\vskip0.1in 
\noindent The notion of multiplicative independence is important in determining normality with respect to different bases. We say that $r, s \in \mathbb N \setminus \{1\}$ are {\em{multiplicatively dependent}}, and write 
\begin{equation} \label{def-mult-dep} r \sim s \quad \text{ if } \quad \frac{\log r}{\log s} \in \mathbb Q, \; \; \text{that is,} \; \; \text{if there exist $m, n \in \mathbb N$ such that } r^m = s^n.  \end{equation}    
Otherwise, we put $r \not\sim s$ and call $r, s$ multiplicatively independent. Schmidt \cite[Theorem 1]{s60} has shown that if $r \sim s$, then any number that is $r$-normal must also be $s$-normal, i.e. the property of normality, or lack thereof, remains invariant for multiplicatively dependent bases. As a result,  
\begin{equation*} 
\mathscr N(\mathscr{B}, \mathscr{B}') = \mathscr N(\overline{\mathscr{B}}, \overline{\mathscr{B}'}), \text{ where } 
\overline{\mathscr{B}} := \left\{b \in \mathbb N \setminus \{1\} \; \Bigl| \; b \sim b_0 \text{ for some } b_0 \in \mathscr{B} \right\}. 
\end{equation*}
Schmidt's result also implies that 
\[ \text{if there exist } b \in \overline{\mathscr{B}}, b' \in \overline{\mathscr{B}'} \text{ such that } b \sim b', \text{ then } \mathscr N(\mathscr{B}, \mathscr{B}') = \emptyset. \] 
Let us agree to call the pair $(\mathscr{B}, \mathscr{B}')$ {\em{compatible}} if such a pair $(b, b')$ does not exist, i.e. 
\begin{equation}  b \not\sim b' \;\; \text{ for all pairs } \; \; (b,b') \in \overline{\mathscr{B}} \times \overline{\mathscr{B}'}. \label{compatibility-definition} \end{equation} For compatible pairs $(\mathscr{B}, \mathscr{B}')$, points in $\mathscr N(\mathscr{B}, \mathscr{B}')$ are plentiful. Pollington \cite{p81} has shown that $\mathscr N(\mathscr{B}, \mathscr{B}')$ has full Hausdorff dimension for any compatible choice of $(\mathscr{B}, \mathscr{B}')$ with $\mathscr{B} \sqcup \mathscr{B}' = \mathbb N \setminus \{1\}$. In this notation, a stronger version of the question \eqref{KS-q} can be stated as: \begin{equation} \label{KS-q2}
{\text{\em{Does $\mathscr N(\mathscr{B}, \mathscr{B}')$ support a Rajchman measure for every compatible pair $(\mathscr{B}, \mathscr{B}')$?}}} 
\end{equation} 
Lyons' result \cite{l86} proves that $\mathscr{N}(\cdot, \{2\})$ supports such a measure. But even in the special case where $2 \in \mathscr{B}'$, this does not address the question posed in \eqref{KS-q2}. Indeed, a main objective of this paper is to use the Rajchman measure constructed in \cite{l86} as a case study, and explore its special features beyond the non-normality of its support in base 2. We intend to return to the question \eqref{KS-q2} in full generality in \cite{PZ-2}. 
\subsection{Statement of the main result} 
Multiplicative dependence of two integers $r$ and $s$, as defined in \eqref{def-mult-dep}, can be verified by a simple criterion: suppose that 
\[ r= \prod_{j=1}^{a} p_j^{u_j} \quad \text{ and } \quad s = \prod_{j=1}^{b} q_j^{v_j}\]
are the prime factorizations of $r$ and $s$. We will assume that  $u_j, v_j \in \mathbb N$ and that the primes $p_j, q_j$ obey the relations $p_1 < p_2 < \ldots < p_{a}$ and $q_1 < \ldots < q_b$. Then $r \sim s$ if and only if all the following three conditions hold: 
\begin{equation} \label{mult-dep-criterion} 
a = b, \quad p_j = q_j \text{ for all $j \in \{1, 2, \ldots, a\}$}, \quad \frac{u_1}{v_1} =  \frac{u_2}{v_2} = \cdots = \frac{u_a}{v_a}. 
\end{equation}   
Let $\mathscr{E}$ and $\mathscr{O}$ denote respectively the set of all even and all odd integers in $\mathbb N \setminus \{1\}$. It follows from \eqref{mult-dep-criterion} that  
\begin{itemize} 
\item $\mathtt e \not\sim \mathtt o$ for every $\mathtt e \in \mathscr{E}$, $\mathtt o \in \mathscr{O}$, 
\item if $\mathtt e \in \mathscr{E}$ and $\mathtt e \sim \mathtt e'$, then $\mathtt e' \in \mathscr{E}$, 
\item  if $\mathtt o \in \mathscr{O}$ and $\mathtt o \sim \mathtt o'$, then $\mathtt o' \in \mathscr{O}$.  
\end{itemize} 
In other words, $\overline{\mathscr{E}} = \mathscr{E}$, $\overline{\mathscr{O}} = \mathscr{O}$, $\mathscr{E} \sqcup \mathscr{O} = \mathbb N \setminus \{1\}$; i.e., $\mathscr{E}$ and $\mathscr{O}$ constitute a compatible partition of integer bases $\mathbb N \setminus \{1\}$, in the sense of \eqref{compatibility-definition}. The main result of this article is the following. 
\begin{theorem} \label{mainthm}
The set $\mathscr N(\mathscr{O}, \mathscr{E})$ of real numbers that are odd-normal but not even-normal supports a Rajchman measure
$\mu$ obeying the following property: for every $\kappa > 0$, there exists $C_{\kappa} > 0$ such that 
\begin{equation}  \bigl|\widehat{\mu}(n) \bigr| \leq \bigl(\log \log |n|  \bigr)^{-1 + \kappa} \quad \text{ for all $n \in \mathbb Z$ with } |n| \geq C_{\kappa}. \label{loglogn-bound}    
\end{equation}  
Consequently, $\mathscr N(\mathscr{O}, \mathscr{E})$ is a set of multiplicity.  
\end{theorem}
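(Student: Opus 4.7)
\vskip0.1in
\noindent\textbf{Proof plan.}
The plan is to construct $\mu$ as a generalized Riesz product that refines the base-$2$ construction of Lyons \cite{l86} so as to handle every even base simultaneously. Enumerate representatives $\mathtt e_1 < \mathtt e_2 < \cdots$ of the $\sim$-equivalence classes within $\mathscr{E}$, and choose rapidly lacunary scales $N_{k,j}$ together with amplitudes $a_{k,j} \in [-1,1]$. Define
\[
\mu := \lim_{K \to \infty} \prod_{k, j \le K} \Bigl(1 + a_{k,j}\cos\bigl(2\pi\, \mathtt e_k^{N_{k,j}} x\bigr)\Bigr)\, dx,
\]
interpreted in the weak-$*$ sense on probability measures on $[0,1)$, with the parameters calibrated so that the Fourier mass of $\mu$ is spread thinly enough to yield $|\widehat{\mu}(n)| \leq (\log\log |n|)^{-1+\kappa}$ yet not so thinly that it fails to obstruct equidistribution in each even base.

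The decay \eqref{loglogn-bound} is then established essentially as in \cite{l86}: because the frequencies $\mathtt e_k^{N_{k,j}}$ form a dissociated set (multiplicative independence of distinct $\mathtt e_k$'s combined with lacunarity of $N_{k,j}$ in $j$), each nonzero Fourier coefficient $\widehat{\mu}(n)$ factors as a product over the components of $n$ with respect to the spectrum, each factor bounded in absolute value by $|a_{k,j}|/2$; optimizing the amplitudes delivers the double-logarithmic rate.

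Non-normality in every even base reduces via Schmidt \cite{s60} to non-normality in each representative $\mathtt e_k$. By construction, $\widehat{\mu}(\mathtt e_k^{N_{k,j}})$ retains mass of order $|a_{k,j}|/2$ as $j \to \infty$, preventing $\mu$-a.e.\ equidistribution of $\{\mathtt e_k^n x\}$ modulo $1$ via Weyl's criterion, and hence yielding $\mathtt e_k$-non-normality on the support of $\mu$. Odd-normality reduces via Schmidt to a single representative $o$ per $\sim$-class in $\mathscr{O}$. Multiplicative independence of $o$ from every $\mathtt e_k$ forces the integers $h(o^n - o^m)$ to evade the exceptional spectrum of $\mu$, so the full decay \eqref{loglogn-bound} applies to $\widehat{\mu}(h(o^n - o^m))$. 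A sparse-subsequence variant of the Davenport--Erd\H{o}s--LeVeque criterion, taken along $N_j$ for which $\sum_j (\log N_j)^{-1+\kappa}$ converges and interpolated monotonically over the intermediate blocks, then yields $\mu$-a.e.\ equidistribution of $\{o^n x\}$, i.e.\ $o$-normality. The conclusion that $\mathscr N(\mathscr{O}, \mathscr{E})$ is a set of multiplicity follows at once from the Piatetski-Shapiro / Kahane-Salem theorem recalled in the introduction.

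The principal technical difficulty is a delicate balancing: the amplitudes $\{a_{k,j}\}$ must be large enough on average to prevent Weyl equidistribution in each of the countably many even $\sim$-classes, yet decay rapidly enough to produce \eqref{loglogn-bound} and to power the Davenport--Erd\H{o}s--LeVeque argument for odd-normality. Compounding this, the rate $(\log\log|n|)^{-1+\kappa}$ is exactly at the boundary where Davenport--Erd\H{o}s--LeVeque fails to apply directly, since $\sum_N \tfrac{1}{N(\log N)^{1-\kappa}}$ diverges for every $\kappa > 0$; recovering $\mu$-a.e.\ odd-normality therefore requires the careful sparse-subsequence passage with monotone interpolation noted above, which I expect to be the main technical effort of the proof.
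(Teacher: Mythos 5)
Your proposal diverges from the paper at every structural level, and the central step — $\mu$-a.e.\ normality in odd bases — contains a gap that the proposed mechanism cannot fix.

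\textbf{The odd-normality step.} You correctly observe that the uniform decay $|\widehat{\mu}(n)| \le (\log\log|n|)^{-1+\kappa}$, fed directly into the Davenport--Erd\H{o}s--LeVeque sum
$\sum_N N^{-3}\sum_{u,v\le N} |\widehat{\mu}(h(o^v-o^u))|$, gives a divergent series of size $\sum_N N^{-1}(\log N)^{-1+\kappa}$. But the remedy you propose — a ``sparse-subsequence variant of DEL with monotone interpolation'' — is not available here. Monotone interpolation of the partial Weyl sums $S_N(x)=\tfrac1N\sum_{n\le N}e(ho^n x)$ requires that the interpolation nodes $N_j$ have ratio $N_{j+1}/N_j \to 1$; otherwise the error $|S_N - (N_j/N)S_{N_j}|$ is not $o(1)$ in the intermediate range. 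Simultaneously, choosing $N_j$ so that $\sum_j (\log N_j)^{-1+\kappa}$ converges forces $\log N_j$ to grow at least polynomially, hence $N_{j+1}/N_j\to\infty$. These two demands are incompatible, so no subsequence choice can close the gap if all you know is the uniform bound. The paper's resolution is entirely different: it shows that for the specific integers $\eta = hr^u(r^v-1)$ with $r$ odd, $|\widehat{\mu}(\eta)|$ is far smaller than the uniform rate for almost all $(u,v)$. The mechanism is arithmetic — Schmidt's lemmas \cite[Lemmas~3--5]{s60} show that (i) $2^{R_0}\mid r^v-1$ occurs for few $v$, and (ii) for most $u$, the binary digits of $hr^u(r^v-1)$ change in a constant proportion of positions, which forces the product $\prod_k \cos(\pi \xi 2^{-k})$ in $\widehat{\mu}$ to contract exponentially. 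This yields bounds like $N^2 2^{-c\sqrt{\log N}}$ and $N^2(\log N)^{-\gamma}$ with $\gamma>1$ for the DEL sum, which converge. Without some version of this ``extra decay along the relevant arithmetic set'' idea, the DEL approach cannot succeed, and your plan does not contain it.

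\textbf{The construction.} You propose a generalized Riesz product $\prod_{k,j}(1+a_{k,j}\cos(2\pi \mathtt e_k^{N_{k,j}}x))\,dx$ involving frequencies from every even $\sim$-class. The paper instead uses Lyons' infinite convolution $\mu = \Asterisk_\ell \mu_\ell$ built entirely from base-$2$ data — a bias toward runs of zeros in the binary expansion — and derives non-normality in \emph{every} even base $b=2^{\mathtt M}\mathtt B$ from the base-$2$ zero-blocks via an elementary pointwise estimate on $\{xb^n\}$ (Proposition~\ref{prop2}). This is considerably leaner: one does not need to juggle countably many incommensurable frequencies nor verify dissociation across bases. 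Moreover, your claimed route to non-normality — that $\widehat{\mu}(\mathtt e_k^{N_{k,j}})$ staying of order $a_{k,j}/2$ ``prevents $\mu$-a.e.\ equidistribution'' — is not by itself a proof; non-vanishing of a subsequence of Fourier coefficients does not directly give pointwise failure of Weyl's criterion for $\mu$-a.e.\ $x$ without an additional Borel--Cantelli--type argument on the support, which is exactly what the paper supplies via the zero-block set $\mathfrak A$.

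\textbf{Summary.} The Riesz-product framing and the reduction to one representative per $\sim$-class are reasonable high-level ideas, but the plan as written has two genuine gaps: the DEL convergence cannot be rescued by subsequence tricks (the quantitative number theory of Schmidt-type digit lemmas is indispensable), and the non-normality claim needs a pointwise argument on $\operatorname{supp}\mu$, not just Fourier non-vanishing.
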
 
\noindent Here $\widehat{\mu}(\cdot)$ represents the Fourier coefficient of $\mu$:
\[ \widehat{\mu}(n) := \int e(xn) d \mu(x) \; \text{ for } n \in \mathbb Z, \qquad e(y) := e^{2 \pi iy}.  \]  
The decay rate \eqref{loglogn-bound} is sharp, up to $\kappa$-loss, in the following sense.  It is known \cite[Remark 1, page 8573]{PVZZ} that if a set $E$ supports a measure $\nu$ such that for some $\kappa > 0$, 
\[ \bigl| \widehat{\nu}(n) \bigr| \leq C_{\kappa}\bigl(\log \log |n| \bigr)^{-1 - \kappa},  \]
then $\nu$-almost all numbers in $E$ are normal in every base. 
\subsection{Proof overview}
The remainder of the paper is devoted to proving Theorem \ref{mainthm}. There are three distinct steps.
\begin{itemize} 
\item The first step involves defining a class of measures $\mu = \mu[\mathcal K, \pmb{\varepsilon}]$, parametrized by a sequence of large numbers $\mathcal K$ and a sequence of small numbers $\pmb{\varepsilon}$.  This step is carried out in Section \ref{mu-construction-section}. The construction used here is the same as the one given by Lyons \cite{l86}, where a bias towards the digit 0 is built into the definition of $\mu$. A geometric description of the measure $\mu$ is given in Section \ref{mu-geometric-section}. The Rajchman property of $\mu$ follows directly from \cite{l86}.
\item The second step shows that under a mild growth condition on $\mathcal K$ and for $\mu$ as above,  $\mu$-almost every $x$ is non-normal in every even integer base $b$. This step appears in Section \ref{non-normality-section}, where Weyl's equidistribution criterion is used to disprove normality. The argument here generalizes a similar line of reasoning from \cite{l86}, which dealt with the special case $b = 2$.  
\item The third step consists of showing that $\mu$-almost every $x$ is normal in every odd base, provided the parameters $\mathcal K$ and $\pmb{\varepsilon}$ employed in the construction of $\mu = \mu[\mathcal K, \pmb{\varepsilon}]$ are appropriately chosen. The summability criterion of Davenport, Erd\H{o}s and LeVeque (Lemma \ref{DEL-lemma}) is invoked for this purpose. Verification of this criterion for the measure $\mu$ is the main new contribution of this article, and constitutes the bulk of its technical complexity. Sections \ref{normality-section-1}--\ref{special-choices-section}) contain this part of the argument, which invokes certain number-theoretic results of Schmidt \cite{s60} to establish estimates of $\widehat{\mu}$ that are finer than the ones provided by the Rajchman property. 
\end{itemize} 
A more detailed roadmap of the proof of Theorem \ref{mainthm} along with an explicit choice of $\mu$ can be found in Section \ref{proof-sketch-section}.

\subsection{Acknowledgements} 
The authors thank Dr.~Xiang Gao for introducing them to the area of metrical number theory, for valuable discussions and extensive references on the subject. This work was initiated in 2022, when JZ was visiting University of British Columbia on a study leave from China University of Mining and Technology-Beijing, funded by a scholarship from China Scholarship Council. He would like to thank all three organizations for their support that enabled his visit. JZ was also supported by National Natural Science Foundation of China  (Grant nos.~11801555, 11971058 and 12071431). MP was partially supported by a Discovery grant from Natural Sciences and Engineering Research Council of Canada (NSERC). 

\section{Construction of the measure $\mu$} \label{mu-construction-section} 
As mentioned in the introduction, we describe the construction of the measure $\mu$ in this section, in a few different ways. The parameters of the construction are the following:
\begin{itemize} 
\item A strictly increasing sequence of integers $\mathcal K = \{K_{\ell} : \ell \geq 0\}$,  with $K_0 =0$ and a corresponding partition of $\mathbb N$ into blocks 
\begin{equation} \label{block-def}  
\mathcal B_{\ell} := \{ K_{\ell-1}+1, K_{\ell-1}+2, \ldots, K_{\ell}\} \subseteq \mathbb N,
\end{equation} 
\item A sequence of real numbers $\pmb{\varepsilon} = \{\varepsilon_{\ell} : \ell \geq 1\}$, such that 
\begin{equation}  \varepsilon_{\ell} \in [0, 1] \text{ for all } \ell \geq 1, \qquad \sum_{\ell=1}^{\infty} \varepsilon_{\ell} = \infty.
\label{epsilon-assumption} \end{equation} 
\end{itemize} 
Given $\mathcal K$ and $\pmb{\varepsilon}$ as above, Lyons \cite{l86} constructed a measure $\mu$ of the following form: 
\begin{equation} \label{def-mu} 
\begin{aligned} 
\mu &= \mu[\mathcal K, \pmb{\varepsilon}] := \overset{\infty}{\underset{\ell=1}{\Asterisk}} \; \mu_{\ell}, \quad \text{ where } \\   
\mu_{\ell} := \varepsilon_{\ell} \delta(0) &+ (1 - \varepsilon_{\ell}) {\underset{k \in \mathcal B_{\ell}}{\Asterisk}} \Bigl[\frac{1}{2} \delta(0) + \frac{1}{2} \delta(2^{-k}) \Bigr] \\ 
= \varepsilon_{\ell} \delta(0) &+ (1 - \varepsilon_{\ell}) 
 {\underset{k=K_{\ell-1}+1}{\overset{K_{\ell}}{\Asterisk}}} \Bigl[\frac{1}{2} \delta(0) + \frac{1}{2} \delta(2^{-k}) \Bigr].
\end{aligned} 
\end{equation}
The infinite convolution structure \eqref{def-mu} of $\mu$ leads to the following product representation of $\widehat{\mu}$: 
\begin{align} \label{mu-hat-infinite-product} 
\widehat{\mu}(\eta) &= \prod_{\ell=1}^{\infty} \widehat{\mu}_{\ell}(\eta) = \prod_{\ell=1}^{\infty} \Bigl[ \varepsilon_{\ell} + (1 - \varepsilon_{\ell}) \mathfrak E_{\ell}(\eta) \Bigr], \; \eta \in \mathbb Z, \text{ with } \\ 
\mathfrak E_{\ell}(\eta) &:= \prod_{k = K_{\ell-1}+1}^{K_{\ell}} \Bigl[ \frac{1}{2} \bigl( 1 + e(2^{-k} \eta) \bigr) \Bigr], \quad e(y) := e^{2 \pi i y}. \label{def-El}
\end{align} 
The Rajchman property of $\mu$ was established in \cite{l86}. We recall it here:
\begin{lemma}[{\cite[Lemma 9]{l86}}] \label{Lyons-lemma}
For any choice of $\mathcal K$ and $\pmb{\varepsilon}$ as in \eqref{block-def} and \eqref{epsilon-assumption} above, let $\mu = \mu[\mathcal K, \pmb{\varepsilon}]$ be the probability measure given by \eqref{def-mu}. Then for every $n \in \mathbb Z$ with $|n| \in [2^{K_{\ell-1}-1}, 2^{K_{\ell}-1})$, the following estimate holds: 
\begin{equation} \bigl| \widehat{\mu}(n) \bigr| \leq \varepsilon_{\ell} \varepsilon_{\ell-1} + \varepsilon_{\ell} + \varepsilon_{\ell-1} + 2^{-(K_{\ell-1} - K_{\ell-2})}.  \label{mu-Rajchman} 
\end{equation} 
If $\mathcal K$ and $\pmb{\varepsilon}$ are chosen so that 
\[ K_{\ell} - K_{\ell-1} \rightarrow \infty \; \; \text{ and } \; \; \varepsilon_{\ell} \rightarrow 0, \; \; \text{ then } \; \; \widehat{\mu}(n) \rightarrow 0 \text{ as $|n| \rightarrow \infty$, i.e., $\mu$ is Rajchman.}  \]  
\end{lemma}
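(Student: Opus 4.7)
My plan is to exploit the infinite product representation \eqref{mu-hat-infinite-product} and retain only the two factors indexed by $j = \ell - 1$ and $j = \ell$, whose scales interact most directly with the given range of $|n|$. Since $|\widehat{\mu}_j(n)| \leq 1$ for every $j$, discarding the remaining factors yields $|\widehat{\mu}(n)| \leq |\widehat{\mu}_{\ell-1}(n) \widehat{\mu}_\ell(n)|$. Writing $|\widehat{\mu}_j(n)| \leq \varepsilon_j + (1 - \varepsilon_j) |\mathfrak E_j(n)| \leq \varepsilon_j + |\mathfrak E_j(n)|$, expanding the resulting product, and bounding $|\mathfrak E_j(n)| \leq 1$ on the cross terms, I arrive at
\[ |\widehat{\mu}(n)| \leq \varepsilon_\ell \varepsilon_{\ell-1} + \varepsilon_\ell + \varepsilon_{\ell-1} + |\mathfrak E_{\ell-1}(n) \mathfrak E_\ell(n)|. \]
Thus \eqref{mu-Rajchman} reduces to the single inequality $|\mathfrak E_{\ell-1}(n) \mathfrak E_\ell(n)| \leq 2^{-(K_{\ell-1} - K_{\ell-2})}$.

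For that I would merge the two blocks of cosines into $\prod_{k=K_{\ell-2}+1}^{K_\ell} \cos(\pi 2^{-k} n)$ and collapse it by iterating the double-angle identity $\cos\theta = \sin(2\theta) / (2 \sin\theta)$. The telescoping produces
\[ \mathfrak E_{\ell-1}(n) \mathfrak E_\ell(n) = \frac{\sin(\pi 2^{-K_{\ell-2}} n)}{2^{K_\ell - K_{\ell-2}} \sin(\pi 2^{-K_\ell} n)}. \]
If the $2$-adic valuation $a$ of $n$ lies in $[K_{\ell-2}, K_\ell - 2]$, the cosine at $k = a + 1$ already vanishes and both sides are $0$; otherwise $a < K_{\ell-2}$, no intermediate sine vanishes, and the identity is legitimate. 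The numerator is trivially $\leq 1$. The key point is the denominator: from $|n| < 2^{K_\ell - 1}$ the number $2^{-K_\ell} n$ lies in $[-1/2, 1/2]$, so the elementary bound $|\sin(\pi x)| \geq 2|x|$ on that interval combined with $|n| \geq 2^{K_{\ell-1} - 1}$ gives $|\sin(\pi 2^{-K_\ell} n)| \geq |n|/2^{K_\ell - 1} \geq 2^{K_{\ell-1} - K_\ell}$. Substituting these estimates yields $|\mathfrak E_{\ell-1}(n) \mathfrak E_\ell(n)| \leq 2^{-(K_\ell - K_{\ell-2})} \cdot 2^{K_\ell - K_{\ell-1}} = 2^{-(K_{\ell-1} - K_{\ell-2})}$, completing \eqref{mu-Rajchman}.

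The main subtlety I anticipate is the recognition that \emph{both} factors $\mathfrak E_{\ell-1}$ and $\mathfrak E_\ell$ must be retained: using only $\mathfrak E_{\ell-1}$ or only $\mathfrak E_\ell$ fails to produce the exponent $K_{\ell-1} - K_{\ell-2}$, since that exponent arises from cancelling the length $K_\ell - K_{\ell-2}$ of the merged telescoping product against the length $K_\ell - K_{\ell-1}$ appearing in the lower bound for $|\sin(\pi 2^{-K_\ell} n)|$. Once \eqref{mu-Rajchman} is established, the Rajchman conclusion is automatic: as $|n| \to \infty$, the index $\ell$ with $|n| \in [2^{K_{\ell-1} - 1}, 2^{K_\ell - 1})$ also tends to infinity, and under the hypotheses $\varepsilon_\ell \to 0$ and $K_\ell - K_{\ell-1} \to \infty$ each of the four terms in the bound vanishes in the limit.
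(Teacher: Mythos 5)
Your argument is correct, and since the paper itself simply cites Lyons' Lemma~9 without reproducing its proof, there is no in-text proof to compare against; your reconstruction (retaining only the two factors $\widehat{\mu}_{\ell-1}$ and $\widehat{\mu}_{\ell}$, expanding, and collapsing the merged cosine product via the double-angle telescoping identity $\cos\theta = \sin 2\theta/(2\sin\theta)$ together with $\sin(\pi x)\geq 2|x|$ on $[0,\tfrac12]$) is the standard route and is essentially what Lyons does in \cite{l86}.
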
 
\noindent The measure $\mu$ given by \eqref{def-mu} admits a few equivalent descriptions. We provide two of them in this section. 
\subsection{An alternative characterization of $\mu$} \label{mu-X-section}
Let $\{X_k : k \geq 1\}$ be a sequence of random variables in a probability space $(\Omega, \mathbb P)$  obeying the following independence and distributional properties:
\begin{itemize} 
\item Each random variable $X_k$ takes values in $\{0,1\}$. 
\item The random variables $X_k$ are block independent; specifically,
\begin{equation} \label{independence-assumption}
\bigl\{ \mathbb X_{\ell} : \ell \geq 1 \bigr\} \text{ are independent random vectors, where } \mathbb X_{\ell} := \{X_k : k \in \mathcal B_{\ell} \}. 
\end{equation} 
\item For every binary string $\pmb{\kappa} \in \{0, 1\}^{K_{\ell}-K_{\ell-1}}$, 
\begin{equation}  \label{X-dist} 
\mathbb P(\mathbb X_{\ell} = \pmb{\kappa}) = \begin{cases} (1 - \varepsilon_{\ell}) 2^{K_{\ell-1} - K_{\ell}} &\text{ if } \pmb{\kappa} \ne 0, \\ \varepsilon_{\ell} + (1 - \varepsilon_{\ell}) 2^{K_{\ell-1} - K_{\ell}}  &\text{ if } \pmb{\kappa} = 0. \end{cases} 
\end{equation}  
\end{itemize} 
Then $\mu$ given by \eqref{def-mu} is the probability distribution of the random variable \[X = \sum_{k=1}^{\infty} X_k 2^{-k}. \] More precisely, suppose that 
 \begin{equation}  x = \sum_{k=1}^{\infty} d_k(x) 2^{-k}  \in [0,1) \label{dyadic-expansion-x} \end{equation}  is a realization of $X$, and $d_k(x) \in \{0, 1\}$ denotes the $k^{\text{th}}$ digit in the binary expansion of $x$. Then $d_k(x)$ is a realization of $X_k$; further for any choice of $n \in \mathbb N$, and $(a_1, \ldots, a_n) \in \{0, 1\}^n$, 
\begin{equation} \label{mu-and-P}
\mu \bigl\{x : d_k(x) = a_k, \; 1 \leq k \leq n \bigr\} = \mathbb P(X_k = a_k, \; 1 \leq k \leq n).
\end{equation}    
\subsection{A geometric description of $\mu$ by mass distribution} \label{mu-geometric-section} 
The measure $\mu$ defined in \eqref{def-mu} can also be described through a deliberately skewed mass distribution principle. The interested reader may refer to the textbook of Falconer \cite[Chapter 1, \S 3]{Falconer-book} for a general introduction to
this principle, which involves creating a measure through repeated subdivision of a mass between parts of a bounded Borel set.  
\vskip0.1in 
\noindent Let $\mathfrak C_0$ consist of the single set $[0,1)$. In the first step, $[0,1)$ is decomposed into $2^{K_1}$ equal sub-intervals.  
\[ [0,1)  = \bigsqcup_{i_1 = 0}^{2^{K_1}-1} \mathscr{I}_{i_1}, \; \text{ where } \; \mathscr{I}_{i_1} = \bigl[ i_1 2^{-K_1}, (i_1+1) 2^{-K_1}  \bigr), \; i_1 \in \mathbb I_1 := \{0, 1, \ldots, 2^{K_1}-1\}.    \] 
We define $\mathfrak C_1 := \{\mathscr{I}_{i_1}: i_1 \in \mathbb I_1 \}$. In general, at the $\ell^{\text{th}}$ step, we have a collection $\mathfrak C_{\ell} := \{ \mathscr{I}_{\mathbf i} : \mathbf i \in \mathbb I_{\ell} \}$ of disjoint intervals of equal length $2^{-K_{\ell}}$ that partition $[0,1)$: 
\begin{align*} 
\mathscr{I}_{\mathbf i} &:= \frac{i_1}{2^{K_1}} + \frac{i_2}{2^{K_2}} + \cdots + \frac{i_{\ell}}{2^{K_{\ell}}} + \bigl[0, 2^{-K_{\ell}} \bigr), \\ 
\mathbf i &= (i_1, \ldots, i_{\ell}) \in \mathbb I_{\ell} := \prod_{k=1}^{\ell} \mathbb J_{\ell}, \quad \mathbb J_{\ell} := \bigl\{0, 1, \ldots, 2^{K_{\ell} - K_{\ell-1}}-1\bigr\}. 
\end{align*}  
At the $(\ell+1)^{\text{th}}$ step, each interval $\mathscr{I}_{\mathbf i}$ is decomposed into disjoint sub-intervals of length $2^{-K_{\ell + 1}}$. The collection $\mathfrak C_{\ell+1}$ comprises all these sub-intervals. Thus each interval $\mathscr{I}_{\mathbf i}$ in $\mathfrak C_{\ell}$ is contained in a unique interval $\mathscr{I}_{\mathbf i'} \in \mathfrak C_{\ell-1}$, called the parent of $\mathscr{I}_{\mathbf i}$. In turn, $\mathscr{I}_{\mathbf i'}$ is the parent of $2^{K_{\ell} - K_{\ell-1}}$ children of the $\ell^{\text{th}}$ generation, which are intervals of the form $\mathscr{I}_{\mathbf i} \in \mathfrak C_{\ell}$, $\mathbf i = (\mathbf i', i_{\ell})$, $i_{\ell} \in \mathbb J_{\ell}$.  Unlike a traditional Cantor-type construction that eliminates certain intervals at every stage, the union of the intervals in $\mathfrak C_{\ell}$ remains $[0,1)$ for every $\ell \geq 1$. 
\vskip0.1in
\noindent In contrast, the mass distribution scheme $\mu$ treats the different basic intervals differently at any given stage. We endow unit mass to the interval $[0,1)$ in $\mathfrak C_0$. This mass is split between the intervals in $\mathfrak C_1$ in the folowing way: first, a mass of $(1 - \varepsilon_1)$ is distributed equally among all the intervals $\mathscr{I}_{i_1}$; the remaining mass $\varepsilon_1$ is then assigned entirely to $\mathscr{I}_0$, giving it preferential weightage. Thus, for $i_1 \in \mathbb I_1$, 
\[ \mathtt m_{i_1} = \mu(\mathscr{I}_{i_1}) = \begin{cases} (1 - \varepsilon_1) 2^{-K_1} &\text{ if } i_1 \in \mathbb I_1 \setminus \{0\}, \\ \varepsilon_1 +
 (1 - \varepsilon_1) 2^{-K_1} &\text{ if } i_1 = 0. \end{cases} \] 
This skewed mass distribution mechanism favouring blocks of zeroes persists at each step. For every $\mathbf i' \in \mathbb I_{\ell-1}$, 
\begin{align*}  
\mathtt m_{\mathbf i'} = \mu(\mathscr{I}_{\mathbf i'}) &= \sum_{i_{\ell} \in \mathbb J_{\ell}} \mu(\mathscr{I}_{\mathbf i}) = \sum_{i_{\ell} \in \mathbb J_{\ell}} \mathtt m_{\mathbf i} \quad \text{ with } \mathbf i = (\mathbf i', i_{\ell}), \\ \frac{\mathtt m_{\mathbf i}}{\mathtt m_{\mathbf i'}} = \frac{\mu(\mathscr{I}_{\mathbf i})}{\mu(\mathscr{I}_{\mathbf i'})} &= 
\begin{cases} (1 - \varepsilon_\ell) 2^{-(K_\ell - K_{\ell-1})} &\text{ if } i_\ell \in \mathbb J_\ell \setminus \{0\}, \\ \varepsilon_\ell +
 (1 - \varepsilon_\ell) 2^{-K_\ell - K_{\ell-1}} &\text{ if } i_\ell = 0. \end{cases} 
  \end{align*} 
The above description of $\mu$ extends to a unique Borel probability measure  \cite[Proposition 1.7]{Falconer-book}, which coincides with the alternative definitions of $\mu$ given in \eqref{def-mu} and in Section \ref{mu-X-section}.  Let us also note in passing that $\mu$ is the weak-$\ast$ limit of the absolute continuous probability measures $\nu_{\ell}$ on $[0,1)$ given by $\nu_{\ell} := 2^{K_{\ell}} \sum_{\mathbf i \in \mathbb I_{\ell}} \mathtt m_{\mathbf i} \mathbf 1_{\mathscr{I}_{\mathbf i}}$.  
\subsection{Layout of proof of Theorem \ref{mainthm}} \label{proof-sketch-section} 
We will present the main components of the proof here, and point the reader to later sections of the paper where these aspects are treated in detail.  Let us choose the following parameters $\mathcal K$ and $\pmb{\varepsilon}$ for constructing $\mu = \mu[\mathcal K, \pmb{\varepsilon}]$ as in \eqref{def-mu}: for a large integer $K \geq 10$, 
\begin{equation} \label{choice-of-K-epsilon} 
K_{\ell} = K^{\ell \omega(\ell)} \text{ with } \omega(\ell) := \lfloor \sqrt{\log \ell} \rfloor,  \qquad \varepsilon_{\ell} = \ell^{-1}.   
\end{equation}  
Then Lemma \ref{Lyons-lemma} implies that $\mu$ is Rajchman. Indeed for the choice \eqref{choice-of-K-epsilon} of $\mathcal K$ and $\pmb{\varepsilon}$, the inequality \eqref{mu-Rajchman} yields an explicit pointwise decay estimate of $\mu$:  
\begin{align*} 
\bigl| \widehat{\mu}(n) \bigr| &\leq \frac{4}{\ell-1} \quad \text{ for } 2^{K_{\ell-1} -1} \leq |n| < 2^{K_{\ell}-1} \\ 
&\leq (\log \log |n|)^{-1 + \kappa} \text{ for all } \kappa > 0 \text{ and } |n| \geq C_{K, \kappa}. 
\end{align*}
The last step follows from the assumption $n < 2^{K_{\ell}-1}$, which is equivalent to the estimate $\ell \omega(\ell) \log_2 K \geq \log_2(\log_2 |n| +1)$. This establishes \eqref{loglogn-bound}. 
\vskip0.1in 
\noindent It remains to show that $\mu$-almost every point is odd-normal but not even-normal. This is executed in the subsequent sections. The statement concerning failure of normality in even bases appears in Proposition \ref{prop2} and is proved in Section \ref{non-normality-section}. The proof of normality in odd bases is substantially more complex. A well-known criterion for checking normality, due to Davenport, Erd\H{o}s and LeVeque \cite{del63}, is recalled in Section \ref{normality-section-1}. Proposition \ref{mu-odd-normal} posits that this criterion is satisfied by $\mu$. Verification of this proposition takes up the remainder of the paper. 
\qed

\section{Non-normality in even bases}    \label{non-normality-section}
\subsection{Normality and distribution modulo 1} \label{unif-dist-section}
Several equivalent formulations of normality are ubiquitous in the literature. One of the earliest characterizations involves the notion of uniformly distributed sequences. We briefly recall the definition:  a sequence $(x_n)_{n \geq 1}$ is said to be {\em{uniformly distributed modulo 1}} if for every $u, v \in [0, 1]$, $u < v$, the following condition holds:
\[ \lim_{N \rightarrow \infty} \frac{1}{N} \# \Bigl\{1 \leq n \leq N: \{x_n \} \in [u, v) \Bigr\} = v - u. \]
A classical theorem known as Weyl's criterion \cite[Theorem 1.2]{b12} states that $(x_n)_{n \geq 1}$ is uniformly distributed modulo 1 if and only if 
\begin{equation} \label{Weyl}
\lim_{N \rightarrow \infty} \frac{1}{N} \sum_{n=1}^{N} e(h x_n)  = 0 \quad \text{ for all non-zero integers } h. 
\end{equation}   
The following result, proved by Wall \cite{w49} in his Ph.D. thesis, connects normality with uniform distribution mod one; a proof can also be found in \cite[Theorem 4.14]{b12}. 
\begin{lemma}[\cite{w49}, {\cite[Theorem 4.14]{b12}}] \label{Weyl-lemma} 
Let $b \geq 2$. Then a real number $\xi$ is normal to base $b$ if and only if $(\xi b^n)_{n \geq 1}$ is uniformly distributed modulo 1. In view of \eqref{Weyl}, this is equivalent to the condition:
\begin{equation} \label{Weyl-2} 
\lim_{N \rightarrow \infty} \frac{1}{N} \sum_{n=1}^{N} e(h \xi b^n)  = 0 \quad \text{ for all non-zero integers } h. 
\end{equation}  
\end{lemma}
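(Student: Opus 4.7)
The plan is to translate the digit-block description of normality into a statement about where the fractional parts of $(\xi b^n)_{n \geq 1}$ fall in subintervals of $[0,1)$, thereby identifying $b$-normality with uniform distribution modulo one on a distinguished family of $b$-adic intervals. The equivalence between uniform distribution modulo one and \eqref{Weyl-2} is then the classical Weyl equidistribution criterion \cite[Theorem 1.2]{b12}, which I would invoke as a black box, so the real content of the lemma lies in the first equivalence.

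The key observation is as follows. If $\xi$ has base-$b$ expansion $\xi = \lfloor \xi \rfloor + \sum_{j \geq 1} x_j b^{-j}$ with $x_j \in \{0, 1, \ldots, b-1\}$, then $\{b^n \xi\} = \sum_{j \geq 1} x_{n+j} b^{-j}$. Consequently, for every block $(d_1, \ldots, d_k) \in \{0, \ldots, b-1\}^k$, the event $(x_{n+1}, \ldots, x_{n+k}) = (d_1, \ldots, d_k)$ coincides (up to a countable set of ambiguous terminating expansions, which is negligible for the limiting frequency) with
\[
\{b^n \xi\} \in J_{(d_1, \ldots, d_k)} := \Bigl[\sum_{j=1}^{k} d_j b^{-j}, \; \sum_{j=1}^{k} d_j b^{-j} + b^{-k}\Bigr),
\]
and $|J_{(d_1, \ldots, d_k)}| = b^{-k}$. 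Hence, if $(\xi b^n)_{n \geq 1}$ is uniformly distributed modulo one, applying the defining property to each such $J_{(d_1, \ldots, d_k)}$ yields that the limiting block frequency equals $b^{-k}$, which is exactly $b$-normality.

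For the converse, I would first establish uniform distribution on any interval $[u, v) \subseteq [0,1)$ whose endpoints are $b$-adic rationals $p/b^k$ by decomposing it as a disjoint finite union of basic blocks $J_{(d_1, \ldots, d_k)}$ and summing. A general $[u,v)$ is then sandwiched between two $b$-adic intervals $I^- \subseteq [u,v) \subseteq I^+$ with $|I^+| - |I^-| < \eta$ for any prescribed $\eta > 0$; a routine $\varepsilon$-argument combined with the fact that $\sum_{n \leq N} \mathbf 1_{[u,v)}(\{\xi b^n\})$ is sandwiched between the corresponding sums for $I^\pm$ then yields uniform distribution on $[u,v)$. The main (and only) obstacle is this approximation step: it is essential that $b$-adic intervals of level $k$ partition $[0,1)$ with mesh $b^{-k} \to 0$, so that the dyadic-type family is both fine enough to approximate arbitrary intervals and coarse enough to be directly controlled by normality. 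Once uniform distribution is established, the equivalence \eqref{Weyl-2} follows from Weyl's criterion.
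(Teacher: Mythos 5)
The paper does not supply a proof of this lemma: it is stated with citations to Wall \cite{w49} and Bugeaud \cite[Theorem 4.14]{b12}, so there is no internal argument to compare against. Your proposal is the standard proof and is essentially correct; it also has the small advantage of working directly with the paper's sliding-window definition of normality, so no detour through the Niven--Zuckerman equivalence of block-counting conventions \cite{nz51} is needed.

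The one phrasing I would tighten is the parenthetical ``up to a countable set of ambiguous terminating expansions, which is negligible for the limiting frequency.'' As written it suggests that, for a fixed $\xi$, countably many indices $n$ might violate the identification $(x_{n+1},\ldots,x_{n+k})=(d_1,\ldots,d_k)\Leftrightarrow\{\xi b^n\}\in J_{(d_1,\ldots,d_k)}$, and that these bad $n$ are discarded as asymptotically negligible. That is not the right picture: once the canonical (non-terminating) base-$b$ expansion of $\xi$ is fixed, the identification holds for \emph{every} $n$ unless $\{\xi b^n\}$ coincides with some left endpoint $p/b^k$, which forces $\xi b^{n+k}\in\mathbb{Z}$, i.e.\ $\xi$ is a $b$-adic rational. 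For a $b$-adic rational, $\xi b^n$ is eventually an integer, so neither side of the lemma holds and the statement is vacuous. The clean way to dispose of the edge case is therefore a one-line split on whether $\xi$ is a $b$-adic rational, not a negligibility claim about indices. With that adjustment, your forward implication and the inner/outer $b$-adic sandwich argument for the converse are both sound, and the final equivalence with \eqref{Weyl-2} is, as you say, just the classical Weyl criterion cited as a black box.
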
 
\subsection{Simultaneous non-normality}
The main result in this section is the following. 
\begin{proposition} \label{prop2}
Let $\mathcal K = \{K_{\ell}\} \subseteq \mathbb N$ be any strictly increasing sequence such that 
\begin{equation} \label{K-ratio}
\limsup_{\ell \rightarrow \infty} \frac{K_{\ell}}{K_{\ell+1}} = 0.   
\end{equation} 
For any sequence $\pmb{\varepsilon}$ obeying \eqref{epsilon-assumption}, let $\mu$ be the probability measure defined in \eqref{def-mu}. Then the condition \eqref{Weyl-2} fails with $h = 1$ for every even integer $b \in \mathbb N$ and for $\mu$-almost every $x \in [0,1)$. Specifically, there exists a Borel set $\mathfrak A$ with $\mu(\mathfrak A) = 1$ such that 
\begin{equation} \label{non-normal-condition}
 \limsup_{N \rightarrow \infty} \frac{1}{N} \sum_{k=1}^{N} \text{Re}(e(xb^n)) = 1  \quad \text{  for all $x \in \mathfrak A$ and } b \in 2\mathbb N.   
 \end{equation} 
Consequently, $\mu$ is supported on $\mathscr N(\cdot, \mathscr{E}) \cap [0,1]$. Here $\mathscr{E}$ denotes the collection of all even integers and $\mathscr N(\cdot, \mathscr E)$ defined as in \eqref{N2} is the set of real numbers that are not normal with respect to any even base.  
\end{proposition}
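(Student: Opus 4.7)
The plan is to produce a single Borel set $\mathfrak A$ of full $\mu$-measure whose elements display arbitrarily long runs of zero binary digits at well-controlled locations, and then to exploit the factor $2^{sn}$ built into every even base $b = 2^s m$ ($m$ odd, $s \geq 1$) to push those runs to the front of $\{x b^n\}$. For the preliminary Borel--Cantelli step, I let $A_\ell := \{\mathbb X_\ell = 0\}$ denote the event that $d_k(x) = 0$ for every $k \in \mathcal B_\ell$. Formula \eqref{X-dist} yields $\mu(A_\ell) \geq \varepsilon_\ell$, and the block independence in \eqref{independence-assumption} together with $\sum_\ell \varepsilon_\ell = \infty$ implies, via the second Borel--Cantelli lemma, that there is a Borel set $\mathfrak A$ with $\mu(\mathfrak A) = 1$ on which $A_\ell$ holds for infinitely many $\ell$.

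Now fix $x \in \mathfrak A$ and an even base $b = 2^s m$. For each $\ell$ at which $A_\ell$ is realised, I split
\[ x = X_\ell + Z_\ell, \quad X_\ell := \sum_{k=1}^{K_{\ell-1}} d_k(x) 2^{-k} = p_\ell \, 2^{-K_{\ell-1}}, \quad Z_\ell := \sum_{k > K_\ell} d_k(x) 2^{-k} \in [0, 2^{-K_\ell}], \]
where $p_\ell \in \mathbb Z$ and the middle contribution $\sum_{k \in \mathcal B_\ell} d_k(x) 2^{-k}$ vanishes because $A_\ell$ holds. For every $n \geq K_{\ell-1}/s$, the quantity $X_\ell b^n = p_\ell \, 2^{sn - K_{\ell-1}} m^n$ is an integer, so $\{x b^n\} = \{Z_\ell b^n\} \leq 2^{-K_\ell} b^n$. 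Given $\eta \in (0, 1/2)$, set $N_\ell(\eta) := \lfloor (K_\ell + \log_2 \eta)/\log_2 b \rfloor$. For $\lceil K_{\ell-1}/s \rceil \leq n \leq N_\ell(\eta)$ one has $\{x b^n\} \leq \eta$ and hence $\text{Re}(e(x b^n)) \geq \cos(2\pi \eta) \geq 1 - 2 \pi^2 \eta^2$; combined with the trivial bound $\text{Re}(e(\cdot)) \geq -1$ on the $O(K_{\ell-1})$ excluded values of $n$, this gives
\[ \frac{1}{N_\ell(\eta)} \sum_{n=1}^{N_\ell(\eta)} \text{Re}(e(x b^n)) \geq (1 - 2 \pi^2 \eta^2) - \frac{C_{b, s} K_{\ell-1}}{N_\ell(\eta)}. \]

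Since $N_\ell(\eta) \asymp_{b, \eta} K_\ell$ as $\ell \to \infty$, all that remains is to drive the correction $K_{\ell-1}/N_\ell(\eta)$ to zero. Here the decisive observation is that \eqref{K-ratio} concerns a nonnegative sequence, whose $\limsup$ coincides with its $\lim$; consequently $K_{\ell-1}/K_\ell \to 0$ holds along \emph{every} sequence $\ell \to \infty$, in particular along the (possibly sparse) Borel--Cantelli subsequence produced above. Passing $\ell \to \infty$ through that subsequence and then $\eta \downarrow 0$ delivers $\limsup_N \frac{1}{N} \sum_{n=1}^N \text{Re}(e(x b^n)) \geq 1$, and the trivial upper bound of $1$ upgrades this to the equality \eqref{non-normal-condition}. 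Failure of Weyl's criterion at $h = 1$ for every $b \in \mathscr E$ and every $x \in \mathfrak A$ then identifies $\mathfrak A$ as a subset of $\mathscr N(\cdot, \mathscr E) \cap [0, 1]$ via Lemma \ref{Weyl-lemma}. The principal obstacle I anticipate is keeping the three scales $K_{\ell-1}$, $K_\ell$, and $\log_2 b$ straight in the sum estimate; this is the sole place where the precise choice of $N_\ell(\eta)$ and the hypothesis \eqref{K-ratio} both genuinely come into play, and it is where decoupling the limsup in \eqref{K-ratio} from the Borel--Cantelli subsequence (via the nonnegativity remark) earns its keep.
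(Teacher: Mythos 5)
Your proof is correct and follows essentially the same route as the paper: build $\mathfrak A$ from the events ``block $\mathcal B_\ell$ is all zeros'' via the second Borel--Cantelli lemma, write $b = 2^s m$, use the power of $2$ in $b^n$ to absorb the first $K_{\ell-1}$ digits into the integer part once $sn \geq K_{\ell-1}$, and observe that the zero block then pushes $\{x b^n\}$ below any threshold for $n$ up to (roughly) $K_\ell/\log_2 b$, with the $K_{\ell-1}$ ``bad'' initial terms controlled by the hypothesis $K_{\ell-1}/K_\ell \to 0$. The only cosmetic difference is that the paper parametrizes the truncation by a fixed fraction $\alpha < s/\log_2 b$ and lets $c^{K_\ell} \to 0$, whereas you parametrize by a threshold $\eta$ and take $\eta \downarrow 0$ at the end; these are equivalent.
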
 
\noindent It is straightforward to verify that \eqref{K-ratio} holds for the choice of integers $\mathcal K = \{K_{\ell}: \ell \geq 1\}$ given by \eqref{choice-of-K-epsilon}. Thus the conclusion of Proposition \ref{prop2} holds for the measure $\mu$ defined in the proof of Theorem \ref{mainthm} in Section \ref{proof-sketch-section}.   
\subsection{Proof of Proposition \ref{prop2}}
\subsubsection{Definition of the set $\mathfrak A$} 
The statement of Proposition \ref{prop2} posits a set $\mathfrak A$ of full $\mu$-measure, every point of which is non-normal in even bases. We describe the set $\mathfrak A$ in this section.
\vskip0.1in 
\noindent For $x \in [0,1)$, let us write $x$ as \eqref{dyadic-expansion-x}, 
and set 
\begin{equation} 
\mathfrak A_{\ell} := \bigl\{x \in [0,1) : d_k(x) = 0 \text{ for all } k \in \mathcal B_{\ell}\bigr\},
\end{equation} 
where $\mathcal B_{\ell}$ is the $\ell^{\text{th}}$ block of integers defined in \eqref{block-def}. 
In other words for every $x \in \mathfrak A_{\ell}$, the binary string of digits corresponding to the base 2 expansion of $x$ contains a block of zeros of length at least $K_{\ell} - K_{\ell-1}$ starting from the $(K_{\ell-1}+1)^{\text{th}}$ entry. 
It follows from \eqref{mu-and-P} and \eqref{X-dist} that 
\begin{equation}
\mu(\mathfrak A_{\ell}) = \mathbb P(\mathbb X_{\ell} = 0) = \varepsilon_{\ell} + (1 - \varepsilon_{\ell}) 2^{K_{\ell-1}-K_{\ell}} \geq \varepsilon_{\ell}.
\end{equation}  
Our choice \eqref{epsilon-assumption} of $\pmb{\varepsilon}$ leads to the observation 
\begin{equation} \label{divergent-sum} 
\sum_{\ell} \mu(\mathfrak A_{\ell}) = \infty.
\end{equation} 
The set $\mathfrak A$ whose existence has been claimed in Proposition \ref{prop2} is now defined as follows: 
\begin{align} 
\mathfrak A & := \limsup_{\ell \rightarrow \infty} \mathfrak A_{\ell} = \bigcap_{k=1}^{\infty} \bigcup_{\ell = k}^{\infty} \mathfrak A_{\ell} = \bigl\{x \in [0,1) : x \in \mathfrak A_{\ell} \text{ for infinitely many indices } \ell \bigr\} \nonumber \\ 
&= \bigl\{x \in [0,1] : d_k(x) = 0 \text{ for all $k \in \mathcal B_{\ell}$ for infinitely many indices } \ell \bigr\}.  \label{def-A}
\end{align}  
The discussion in Section \ref{mu-X-section} establishes that the event $\mathfrak A_{\ell}$ depends only on the random vector $\mathbb X_{\ell}$. The collection of events $\{\mathfrak A_{\ell} : \ell \geq 1 \}$ is therefore independent, by the assumption \eqref{independence-assumption} of mutual independence of the random vectors $\{\mathbb X_{\ell} : \ell \geq 1\}$. Combined with the divergence condition \eqref{divergent-sum}, this permits the application of the Borel-Cantelli lemma \cite[Theorem 2.3.6]{d10}, which yields $\mu(\mathfrak A) = 1$. 

\subsubsection{Proof of \eqref{non-normal-condition}} 
Choose any $b \in 2\mathbb N$, and any $x \in \mathfrak A$. In order to prove \eqref{non-normal-condition}, it suffices to find a subsequence of integers $N_j \rightarrow \infty$ (possibly depending on $b$ and $x$) such that
\begin{equation} \label{limsup-1} 
 \lim_{j \rightarrow \infty}\frac{1}{N_j} \sum_{k=1}^{N_j} \text{Re}(e(xb^n)) \geq 1. 
\end{equation} 
Since each summand is at most 1, the above relation would establish that
\[ 1 \leq \limsup_{N \rightarrow \infty}\frac{1}{N} \sum_{k=1}^{N} \text{Re}(e(xb^n)) \leq 1, \text{ which is \eqref{non-normal-condition}}.\] 
For every $x \in \mathfrak A$, the defining condition \eqref{def-A} ensures the existence of a sequence of integers $\ell_j \nearrow \infty$ depending on $x$ such that 
\begin{equation}  \label{zero-blocks}
d_k(x) = 0 \quad \text{ for all } k \in \mathcal B_{\ell}, \; \ell \in \{\ell_j : j \geq 1 \}. 
\end{equation}  
The sequence $N_j$ realizing \eqref{limsup-1} will be chosen depending on $\ell_j$. 
\vskip0.15in
\noindent We set about proving \eqref{limsup-1}. For any base $b \in 2 \mathbb N$, let us write 
\begin{equation} \label{even-b} 
b = 2^{\mathtt M} \mathtt B \quad \text{ where } \quad \mathtt M, \mathtt B \in \mathbb N, \; 2 \nmid \mathtt B. 
\end{equation} 
This means that $2^{\mathtt M} \leq b$, and hence $\mathtt M/\log_2 b \in (0,1]$. Let us fix a constant $\alpha$:
\begin{equation} \label{def-alpha} 
0 < \alpha < \frac{\mathtt M}{\log_2 b} \leq 1. 
\end{equation}    
For any sufficiently large index $j$ depending on $b$, let us set $\ell = \ell_j$, and define 
\begin{align} 
&N = N_j  := 1 + \Bigl\lfloor \frac{\alpha}{\mathtt M} K_{\ell} \Bigr\rfloor, \quad N' = N'_j := 1 + \Bigl\lfloor \frac{1}{\mathtt M} K_{\ell-1} \Bigr\rfloor ,  \text{ so that } \label{N-N'} \\  &\frac{N'}{N} = \frac{N'_j}{N_j}\leq 2{\Bigl\lfloor \frac{1}{\mathtt M} K_{\ell-1} \Bigr \rfloor}/{\Bigl\lfloor \frac{\alpha}{\mathtt M} K_{\ell} \Bigr\rfloor} \rightarrow 0 \text{ as } \ell = \ell_j \rightarrow \infty. \label{N'/N}
\end{align} 
The floor function $\lfloor a \rfloor$ in the display above denotes the integral part of $a$. The limit in \eqref{N'/N} follows from the hypothesis \eqref{K-ratio}. We also set 
\begin{equation}   
\mathbb N_1 = \mathbb N_1(N) := \bigl\{1, 2, \ldots, N' \bigr\}, \quad \mathbb N_2 = \mathbb N_2(N) := \{N' + 1, \ldots, N\}.   
\end{equation} 
The above splitting of $\{1, 2, \ldots, N\}$ results in a corresponding 
decomposition of the exponential sum $\mathcal S$: 
\begin{align} \label{S-formula}
\mathcal S &= \mathcal S(x) := \sum_{n=1}^{N} e(xb^n) = \mathcal S_1(x) + \mathcal S_2(x), \quad \text{ where } \\ \mathcal S_i 
&= \mathcal S_i(x) = \sum_{n \in \mathbb N_i} e(xb^n), \; i=1,2. 
\end{align}   
The main claim concerning $\mathcal S_2$ is the following: for $x \in \mathfrak A$, 
\begin{equation} \label{S2-claim}
\text{Re}(\mathcal S_2(x)) \geq (N-N') \cos \bigl(2 \pi b c^{K_{\ell}} \bigr) \text{ where } c = \frac{1}{2} b^{\frac{\alpha}{\mathtt M}}.
\end{equation} 
We will prove \eqref{S2-claim} momentarily in Section \ref{S2-claim-section} below. Assuming this, the proof of \eqref{non-normal-condition} is completed as follows. The choice  \eqref{def-alpha} of $\alpha$ implies that $c \in (0,1)$. As a result, 
\begin{equation}  \label{cosine-lower-bound}   
c^{K_{\ell}} \rightarrow 0 \text{ as } \ell \rightarrow \infty, \quad \text{ and hence } \quad \cos(2 \pi b c^{K_{\ell}}) \rightarrow 1. 
\end{equation}  
Inserting \eqref{S2-claim} into the expression \eqref{S-formula} for $\mathcal S$, and using the trivial estimate $|\mathcal S_1| \leq N'$, we arrive at the relation
\begin{align*}
\text{Re}(\mathcal S(x)) = \text{Re}(\mathcal S_2(x)) + \text{Re}(\mathcal S_1(x)) \geq (N-N') \cos \bigl(2 \pi b c^{K_{\ell}} \bigr)  -  N'
\end{align*} 
for every $x \in \mathfrak A$. Consequently, 
\begin{align*} 
1 &\geq \frac{1}{N} \sum_{k=1}^{N} \text{Re}(e(xb^n)) = \frac{\text{Re}(\mathcal S(x))}{N} \\
&\geq \left(1 - \frac{N'}{N}\right) \cos \bigl(2 \pi b c^{K_{\ell}} \bigr)  - \frac{N'}{N} \rightarrow 1 \text{ as } N = N_j \rightarrow \infty.
\end{align*} 
The evaluation of the limit in the last step follows from \eqref{N'/N} and \eqref{cosine-lower-bound}. This establishes \eqref{limsup-1}.  
\qed
\subsubsection{Proof of \eqref{S2-claim}} \label{S2-claim-section}  
It remains to prove the lower bound on \eqref{S2-claim} on the real part of the exponential sum $\mathcal S_2$. Let us observe that
\[ \text{Re}(\mathcal S_2) = \sum_{n=N'+1}^{N} \cos(2 \pi xb^n) = \sum_{n=N'+1}^{N} \cos(2 \pi \{ xb^n \}) \]
involves $(N-N')$ summands, and $\cos( \cdot)$ is a decreasing function in the domain $[0, \frac{\pi}{2}]$. Thus the inequality \eqref{S2-claim} will follow if we show that for $x \in \mathfrak A$, 
\begin{equation} 
\{xb^n\} \leq b c^{K_{\ell}} < \frac{1}{4}, \quad \text{ for all } n \in \mathbb N_2 = \{N'+1, \ldots, N\}.  
\end{equation} 
Here $\{a\} = a - \lfloor a \rfloor$ denotes the fractional part of $a$. 
\vskip0.1in 
\noindent We estimate $\{xb^n\}$ using \eqref{dyadic-expansion-x} and \eqref{even-b}:
\[ xb^n= \sum_{k=1}^{\infty} (2^{\mathtt M} \mathtt B)^n d_k(x) 2^{-k}, \quad \text{ which implies } \quad  \{xb^n\}= \Bigl\{ \mathtt B^n \sum_{k=\mathtt Mn+1}^{\infty} d_k(x) 2^{-k + \mathtt Mn} \Bigr\}.  \] 
Even though the first index in the sum representing $\{xb^n\}$ above appears to be $k = \mathtt Mn+1$, we will now show that the first non-zero entry occurs much later. The definition of $N, N'$ in \eqref{N-N'} and the range of $n \in \mathbb N_2$ gives 
\begin{align*}  
&\frac{1}{\mathtt M} K_{\ell-1} < N'  < n \leq N \leq 1 + \frac{\alpha}{\mathtt M} K_{\ell}, \text{ which in turn shows that } \\ 
&\mathtt Mn + 1 \in [K_{\ell-1} + 1, \alpha K_{\ell} + 2\mathtt M] \cap \mathbb N \subseteq \mathcal B_{\ell} = [K_{\ell-1}+1, K_{\ell}] \cap \mathbb N.  
\end{align*} 
The last containment holds for $\ell$ sufficiently large, so that $\alpha K_{\ell} + 2 \mathtt M < K_{\ell}$. The defining condition \eqref{zero-blocks} for $x \in \mathfrak A$ says that $d_k(x) = 0$ for all $k \in \mathcal B_{\ell}$; this results in the following reduction: for $n \in \mathbb N_2 = \{N' + 1, \ldots, N\}$, 
\begin{align*} 
\{xb^n\} &= \mathtt B^n \sum_{k = K_{\ell} + 1}^{\infty} d_k(x) 2^{-k + \mathtt Mn} = b^n \sum_{k = K_{\ell} + 1}^{\infty} d_k(x) 2^{-k} \\ 
&\leq b^{N} \sum_{k = K_{\ell} + 1}^{\infty} 2^{-k} = b^N 2^{-K_{\ell}} \\ 
&\leq b^{1 + \frac{\alpha K_{\ell}}{\mathtt M}} 2^{-K_{\ell}}  = b c^{K_{\ell}}, 
\end{align*}  
with $c$ as in \eqref{S2-claim}. The penultimate step uses the fact that $N \leq 1 + \alpha K_{\ell}/\mathtt M$, which is a consequence of the definition \eqref{N-N'}. This completes the proof of Proposition \ref{prop2}.

\section{Normality in odd bases} \label{normality-section-1}
Next we turn our attention to the following question: are points in supp$(\mu)$ normal in certain bases? Clearly even integer bases are eliminated from this consideration, by virtue of the analysis in Section \ref{non-normality-section}. Our goal is to show that the answer is affirmative for all other integer bases. Precisely, $\mu$-almost every point is normal in every odd base, provided the parameters $\mathcal K$ and $\pmb{\varepsilon}$ in the construction of $\mu = \mu[\mathcal K, \pmb{\varepsilon}]$ are suitably chosen. This section outlines the broad strokes of this argument. Details of the proof are relegated to later sections. 
\subsection{A sufficient condition for proving $b$-normality}
Let us recall the discussion in Section \ref{unif-dist-section}, and specifically Lemma \ref{Weyl-lemma}, which relates normality with uniform distribution modulo 1.  A fundamental tool in the theory of uniform distribution is the theorem of Davenport, Erd\H{o}s and LeVeque \cite{del63}. It relates generic distribution properties of a sequence of the form $\{s_n(x) : n \geq 1\}$ with $x \in \text{supp}(\mu) \subseteq [0,1]$ with Fourier decay properties of $\widehat{\mu}$. In view of Lemma \ref{Weyl-lemma} and specializing to $s_n(x) = b^n x$, this result also provides a sufficient condition for establishing generic $b$-normality of numbers on supp$(\mu)$. While the criterion was originally stated in \cite{del63} for the Lebesgue measure on $[0,1]$, the same proof generalizes verbatim for any probability measure $\mu$, and is now ubiquitous in that generalized form in the literature. We state the latter version with appropriate references.       
\begin{lemma}[\cite{del63}, {\cite[Theorem 1.3]{Gao-Ma-Song-Zhang}}, {\cite[Theorem DEL]{PVZZ}}] \label{DEL-lemma}
Let $\mu$ be a probability measure on $[0,1]$ and $\{s_n(\cdot) : n \geq 1\}$ a sequence of measurable functions, also on $[0,1]$. If 
\begin{equation} \label{DEL-v1} 
\sum_{N=1}^{\infty} N^{-1} \int_{0}^{1} \Bigl| \frac{1}{N} \sum_{n=1}^{N} e \bigl(h s_n(x) \bigr)\Bigr|^2 \, d\mu(x) < \infty \quad \text{ for every } h \in \mathbb Z \setminus \{0\},  
\end{equation}  
then $\{s_n(x) : n \geq 1\}$ is uniformly distributed modulo 1 for $\mu$-almost every $x$. 
\vskip0.1in 
\noindent Suppose $b \in \mathbb N \setminus \{1\}$. Specializing \eqref{DEL-v1} to the case $s_n(x) = b^n x$ and applying Lemma \ref{Weyl} leads to the following statement: if  for every $h \in \mathbb Z \setminus \{0\}$, 
\begin{align} 
&\sum_{N=1}^{\infty} N^{-3} \sum_{u = 1}^{N} \sum_{v = 1}^{N} \widehat{\mu}(h(b^v-b^u)) < \infty, \; \text{ or equivalently } \nonumber \\ 
&\sum_{N=1}^{\infty} N^{-3} \sum_{u = 1}^{N} \sum_{v = 1}^{N-u} \widehat{\mu}(hb^u (b^v-1)) < \infty, 
\end{align}  
then $\mu$-almost every $x$ is $b$-normal.  
\end{lemma}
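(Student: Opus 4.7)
The plan is to establish the general uniform-distribution statement first and then specialize to $s_n(x) = b^n x$. Since the integrand in \eqref{DEL-v1} is non-negative, Tonelli's theorem allows an interchange of the sum and the integral, so that the hypothesis is equivalent to
\[ \int_0^1 \sum_{N=1}^{\infty} \frac{1}{N} \bigl| A_N(x;h) \bigr|^2 \, d \mu(x) < \infty, \qquad A_N(x;h) := \frac{1}{N} \sum_{n=1}^{N} e\bigl( h s_n(x) \bigr). \]
For each fixed $h \in \mathbb Z \setminus \{0\}$, this forces $\sum_{N} N^{-1} |A_N(x;h)|^2 < \infty$ on a set $\Omega_h$ of full $\mu$-measure.

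The central step is to deduce, for every $x \in \Omega_h$, that $A_N(x;h) \to 0$ as $N \to \infty$. The mechanism, due to Davenport, Erd\H{o}s and LeVeque, exploits the Lipschitz-type regularity $|A_N - A_{N-1}| \leq 2/N$, which is immediate from $A_N$ being a Ces\`aro average of unimodular terms. If $A_N(x;h)$ did not tend to zero, there would exist $\delta > 0$ and indices $N_k \nearrow \infty$ with $|A_{N_k}(x;h)| \geq \delta$. Passing to a sparse subsequence so that the windows below are disjoint, the Lipschitz bound propagates $|A_N(x;h)| \geq \delta/2$ throughout $\bigl[ N_k, (1 + c\delta) N_k \bigr]$, each such window contributing at least $c' \delta^3$ to $\sum_{N} N^{-1} |A_N|^2$ and thereby forcing divergence of the series, a contradiction. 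Intersecting the countable family $\{\Omega_h : h \in \mathbb Z \setminus \{0\}\}$ produces a single set of full $\mu$-measure on which $A_N(x;h) \to 0$ for every non-zero $h$, so that Weyl's criterion \eqref{Weyl} yields uniform distribution modulo $1$ of $(s_n(x))_{n \geq 1}$ for $\mu$-a.e. $x$.

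The specialization to $s_n(x) = b^n x$ is a direct expansion:
\[ \Bigl| \frac{1}{N} \sum_{n=1}^{N} e(h b^n x) \Bigr|^2 = \frac{1}{N^2} \sum_{u,v=1}^{N} e\bigl( h(b^v - b^u) x \bigr). \]
Integrating against $\mu$ and applying Fubini converts each exponential into the Fourier coefficient $\widehat{\mu}(h(b^v - b^u))$, so the condition \eqref{DEL-v1} becomes $\sum_N N^{-3} \sum_{u,v=1}^{N} \widehat{\mu}(h(b^v - b^u)) < \infty$. The diagonal contribution $u = v$ produces $\widehat{\mu}(0) = 1$, summing to $O(N^{-2})$ and hence trivially summable; using conjugate symmetry to restrict to $v \geq u$ and writing $b^v - b^u = b^u(b^{v-u} - 1)$ yields the equivalent reformulation stated in the lemma. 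Lemma \ref{Weyl-lemma} then concludes $b$-normality $\mu$-almost everywhere.

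The principal obstacle is the passage from $\sum_N N^{-1}|A_N(x;h)|^2 < \infty$ to $A_N(x;h) \to 0$: raw weighted $\ell^2$-summability does not by itself force the terms to vanish (for instance, $|A_N|^2 \sim 1/\log N$ is summable against $1/N$), and one must genuinely invoke the Lipschitz regularity of the Ces\`aro averages to rule out such slowly decaying obstructions. The remaining assertions are either bookkeeping or routine consequences of Weyl's criterion.
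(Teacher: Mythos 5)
The paper does not actually prove this lemma; it explicitly defers to \cite{del63}, \cite{Gao-Ma-Song-Zhang} and \cite{PVZZ}, remarking only that the Davenport--Erd\H{o}s--LeVeque argument for Lebesgue measure carries over verbatim to arbitrary probability measures. So there is no in-paper proof to compare against, and your task was to reconstruct the classical argument.

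Your reconstruction is correct and is essentially the standard DEL proof. You correctly identify the two non-trivial points and handle both. First, the Tonelli reduction from \eqref{DEL-v1} to pointwise summability of $\sum_N N^{-1}|A_N(x;h)|^2$ for $\mu$-a.e.~$x$, followed by intersecting the countably many full-measure sets $\Omega_h$, is exactly right. Second, and more importantly, you correctly flag that weighted $\ell^2$-summability alone does not force $A_N\to 0$, and that the missing ingredient is the oscillation bound $|A_N - A_{N-1}| \leq 2/N$, which you derive correctly from the telescoping identity $N A_N - (N-1)A_{N-1} = e(h s_N(x))$ together with $|A_{N-1}|\le 1$. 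The ``gap'' argument you then run — assuming $|A_{N_k}|\ge\delta$, propagating $|A_N|\ge\delta/2$ across a window of proportional length $[N_k,(1+c\delta)N_k]$ via the variation bound, extracting disjoint windows along a sparse subsequence, and showing each contributes a fixed positive amount $\gtrsim\delta^3$ to the series — is a valid and clean way to close this, and is one of the standard implementations of the DEL lemma. The specialization to $s_n(x)=b^n x$ (expand $|\cdot|^2$, Fubini to produce $\widehat{\mu}(h(b^v-b^u))$, discard the summable diagonal contribution $\sum_N N^{-2}$, restrict to $v>u$ by conjugate symmetry, and substitute $b^v-b^u=b^u(b^{v-u}-1)$) is routine and carried out correctly, with Lemma \ref{Weyl-lemma} delivering $b$-normality. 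I see no gaps.
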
 
\noindent We will use Lemma \ref{DEL-v1} to prove that $\mu$-almost every $x$ is $r$-normal where $r \geq 3$ is odd and $\mu$ is as in \eqref{def-mu}.   
The main result of this section is the following. 
\begin{proposition} \label{mu-odd-normal}
There exists a choice of $\mathcal K$ and $\pmb{\varepsilon}$ as in \eqref{block-def} and \eqref{epsilon-assumption}, for instance \eqref{choice-of-K-epsilon}, with the following property. For $\mu = \mu[\mathcal K, \pmb{\varepsilon}]$ as in \eqref{def-mu} and any odd integer $r \in \mathbb N \setminus \{1\}$, the following relation holds:  
\begin{equation} \label{I-est}
\sum_{N=1}^{\infty} N^{-3} \mathtt I(h; r, N) < \infty \quad \text{  where } \quad 
\mathtt I(h;r, N) := \sum_{v = 1}^{N} \sum_{u = 1}^{N} \bigl| \widehat{\mu}(hr^u(r^v-1)) \bigr|. 
\end{equation} 
As a result, by the Davenport-Erd\H{o}s-LeVeque criterion (Lemma \ref{DEL-lemma}) and for this choice of $\mathcal K, \pmb{\varepsilon}$,  
\[\text{$\mu = \mu[\mathcal K, \pmb{\varepsilon}]$-almost every $x$ is $r$-normal for every odd integer $r \geq 3$.} \] More precisely, $\mu$ is supported on $\mathscr N(\mathcal O, \cdot) \cap [0,1)$, the set of real numbers in $[0, 1)$ that are normal to every odd base. 
\end{proposition}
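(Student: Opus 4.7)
The approach is to verify the Davenport--Erd\H{o}s--LeVeque summability criterion of Lemma \ref{DEL-lemma}. Switching the order of summation, \eqref{I-est} is equivalent to
\begin{equation*}
\sum_{u, v \geq 1} \frac{|\widehat{\mu}(h r^u(r^v-1))|}{\max(u,v)^2} < \infty,
\end{equation*}
so the whole task reduces to controlling $|\widehat{\mu}|$ at the arithmetic family of frequencies $\eta_{u,v} := h r^u(r^v-1)$, using the infinite product representation \eqref{mu-hat-infinite-product}.

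The first reduction will exploit the parity of $r$. Writing $\eta_{u,v} = 2^t M$ with $M$ odd and $t = \nu_2(h) + \nu_2(r^v-1)$, a lifting-the-exponent argument gives $t = O_{r,h}(\log v)$, which is dwarfed by $\log_2|\eta_{u,v}| \asymp (u+v)\log_2 r$. The cosine factors in \eqref{def-El} thereby reduce to $\pm\cos(\pi M/2^{k-t})$ for $k > t$, and one identifies the critical block $\ell^* = \ell^*(u,v)$ with $|\eta_{u,v}| \in [2^{K_{\ell^*-1}-1}, 2^{K_{\ell^*}-1})$; under the scaling \eqref{choice-of-K-epsilon} this satisfies $\ell^*(u,v) \asymp \log(u+v)/(\sqrt{\log\log(u+v)}\log K)$. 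Lemma \ref{Lyons-lemma} alone then delivers $|\widehat{\mu}(\eta_{u,v})| \lesssim 1/\ell^*(u,v)$. However, substituting this Rajchman-level bound into the double sum above yields the series $\sum_v \sqrt{\log\log v}/(v\log v)$ after summing over $u \leq v$, which diverges. So the Rajchman estimate by itself is insufficient, and sharper control at the specific frequencies $\eta_{u,v}$ is indispensable.

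The remedy is to pierce the Rajchman barrier at the arithmetic frequencies $\eta_{u,v}$ by invoking the number-theoretic work of Schmidt \cite{s60}. Because $r$ and $2$ are multiplicatively independent (as $r$ is odd), Schmidt's theorem supplies quantitative equidistribution of the residues $r^u \bmod 2^m$ as $u$ varies over the relevant moduli $m$. The plan is to translate this into genuine cancellation in the cosine products $|\mathfrak E_\ell(\eta_{u,v})|$ for blocks $\ell$ near $\ell^*(u,v)$, where each factor $\cos(\pi M/2^{k-t})$ is sensitive to specific binary digits of $r^u(r^v-1)$; compounding this cancellation across multiple blocks should improve the Rajchman bound by a polylogarithmic factor in $\log|\eta_{u,v}|$, producing an estimate of the form
\begin{equation*}
|\widehat{\mu}(\eta_{u,v})| \leq C_{h,r} \, [\log(u+v)]^{-1} \, [\log\log(u+v)]^{-3/2-\delta}
\end{equation*}
for some $\delta > 0$, valid for all $(u,v)$ outside a sparse exceptional set; the exceptional contribution can then be handled separately via the trivial bound $|\widehat{\mu}| \leq 1$ combined with a counting argument on the density of exceptional pairs. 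With such a refined estimate, the double sum converges. The main obstacle is precisely this step: extracting from Schmidt's qualitative equidistribution framework the uniform quantitative estimates in $(u,v,h)$ needed to beat the Rajchman decay by a definite $\log\log$ margin. It is also the point at which the growth rates $K_\ell = K^{\ell\omega(\ell)}$ and $\varepsilon_\ell = 1/\ell$ from \eqref{choice-of-K-epsilon} play their essential role: the block lengths $|\mathcal B_\ell|$ must be long enough for Schmidt's estimates to activate effectively on each block, yet short enough that the block-by-block accumulation of error terms remains summable in $\ell$.
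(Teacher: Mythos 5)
Your high-level plan is sound --- split $(u,v)$ into an exceptional set controlled by counting and a generic set controlled by a better-than-Rajchman pointwise estimate, invoking Schmidt's number theory for the splitting --- and your back-of-envelope calculation that the bare Rajchman bound diverges is correct and a good sanity check. But two crucial pieces of the mechanism are misidentified, and without them the generic-set estimate you want cannot be derived.

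The first gap is where the polylogarithmic bound on the generic set actually comes from. You attribute it to ``compounding cancellation in the cosine products $|\mathfrak E_\ell(\eta_{u,v})|$ across multiple blocks.'' But when the binary digits of $\eta_{u,v}$ change frequently in a block $\mathcal B_\ell$ --- which is precisely the generic situation Schmidt's lemmas deliver --- each $|\mathfrak E_\ell|$ is already \emph{exponentially} small in the block length $K_\ell - K_{\ell-1}$ (each cosine factor drops to $\le \tfrac{\sqrt 2}{2}$). Cancellation is not the bottleneck. What limits the decay is the structure of $\widehat\mu_\ell = \varepsilon_\ell + (1-\varepsilon_\ell)\mathfrak E_\ell$: when you expand $\prod_\ell\bigl[\varepsilon_\ell + (1-\varepsilon_\ell)\mathfrak E_\ell(\xi)\bigr]$, the term $\prod_\ell\varepsilon_\ell$ does not see the cosines at all and cannot be beaten by any cancellation argument. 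It must be made small by a direct hypothesis relating $\mathcal K$ to $\pmb\varepsilon$ --- in the paper, the ``admissibility'' requirement $\prod_{\ell=\mathtt t+1}^{\mathtt T-1}\varepsilon_\ell < K_{\mathtt T}^{-\gamma}$ --- and it is this product, not cancellation, that yields the $(\log N)^{-\gamma}$ generic pointwise bound. Your plan never engages with this term, so the estimate it aims to produce is unsupported.

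The second gap is that Schmidt's results are used combinatorially, not as quantitative equidistribution driving cancellation. Schmidt's lower bound on $\mathrm{ord}_{2^k}(r)$ shows the set of $v$ with $2^{R_0}\mid r^v-1$ has cardinality $\lesssim N2^{-\sqrt R}$; Schmidt's Lemma 3 counts binary strings with few consecutive digit changes; and his Lemma 5A bounds $\#\{m:\varrho r^m\equiv\sigma\ (\mathrm{mod}\ 2^k)\}$ by the $2$-part of $\varrho$. Chained together, these show that the set of $u$ for which some block of binary digits of $hr^u(r^v-1)$ has few digit changes also has cardinality $\lesssim N2^{-c\sqrt R}$. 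That is the precise quantitative form of ``sparse'' you need: with $R\asymp\log_2N$, the exceptional density is $\lesssim 2^{-c\sqrt{\log N}}$, and the trivial bound $|\widehat\mu|\le1$ on the exceptional set then sums. Left as a vague ``counting argument on the density of exceptional pairs,'' this step is where the proposal stalls; the digit-change/residue-class counting framework of \cite{s60} is the specific tool that closes it, and it operates by controlling how many indices land in the bad set, not by extracting cancellation at those indices.
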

\noindent The proof structure of Proposition \ref{mu-odd-normal} is outlined in Figure \ref{layout-fig}. In Section \ref{I-est-section}, we will decompose the sum in \eqref{I-est} into two sub-sums, which will be treated separately in Propositions \ref{Ii-lemma-parta} and \ref{Ii-lemma-partb}. The first of these (Proposition \ref{Ii-lemma-parta}) is quite general, in the sense that it holds for any choice of parameters $\mathcal K, \pmb{\varepsilon}$ for which $\mu$ as in \eqref{def-mu} can be defined. In contrast, Proposition \ref{Ii-lemma-partb} places stronger restrictions on 
$\mathcal K, \pmb{\varepsilon}$. The necessary criterion is termed ``admissibility with exponent $\gamma > 1$''. In Lemma \ref{special-choices-lemma}, we verify that the choice \eqref{choice-of-K-epsilon} of parameters is admissible with a valid exponent $\gamma$, so that both propositions apply.      
\begin{center}
\begin{figure}
\begin{tikzpicture}[node distance=2cm]
\node (thm) [box, align=center] {Normality in odd bases \\ Proposition \ref{mu-odd-normal}};
\node (S3) [box, below of=thm, xshift=-5cm, yshift=-0.5cm, align=center]
{Proposition \ref{Ii-lemma-parta}; \\  Holds for any $\mathcal K, \pmb{\varepsilon}$ \\ obeying \eqref{block-def} and \eqref{epsilon-assumption}};
\node (S4) [box, below of=thm, xshift=5cm, yshift=-0.5cm, align=center] {Proposition \ref{Ii-lemma-partb}; needs $\mathcal K, \pmb{\varepsilon}$ with \\ \eqref{block-def} and \eqref{epsilon-assumption}, also admissible \\ with $\gamma > 1$, see page \pageref{tT-condition}};
\node (S2) [box, below of=thm, xshift=0cm, yshift=-3cm,align=center] {$\mathcal K, \pmb{\varepsilon}$ as in \eqref{choice-of-K-epsilon}};.

\draw [arrow] (S3) -- (thm) ;
\draw [arrow] (S4) -- (thm) ;
\draw [arrow] (S2) -- (S3) ;
\draw [arrow] (S2) -- (S4) ;
\end{tikzpicture}
\caption{Proof scheme of Proposition \ref{mu-odd-normal}} \label{layout-fig}
\end{figure}
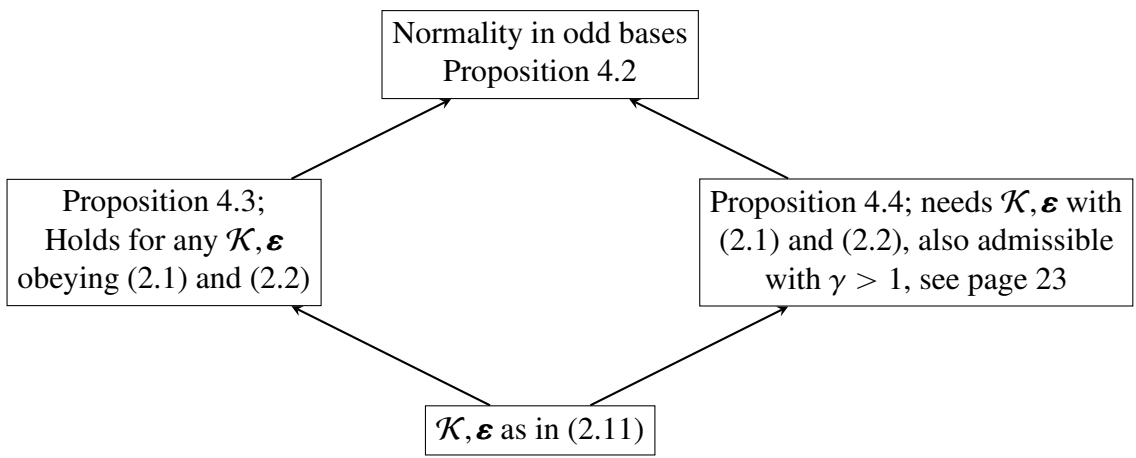 
\end{center} 
\subsection{Estimation of $\mathtt I$} \label{I-est-section}
The pointwise estimate \eqref{mu-Rajchman} on $\widehat{\mu}$, while sufficient for ensuring the Rajchman property of $\mu$, is too weak to ensure summability  of $N^{-3} \mathtt I(h; r, N)$. Estimation of the double sum \eqref{I-est} representing $\mathtt I$, ultimately leading to the convergence of the series $\sum_N \mathtt I(\cdot ; \cdot, N)/N^3$, rests on the following key steps: 
\begin{itemize} 
\item If the integer $n=hr^u(r^v-1)$ possesses certain special arithmetic structures, then the Fourier coefficient $\widehat{\mu}(n)$ is large, close to the upper bound presented in \eqref{mu-Rajchman}. Such a situation could arise, for instance, when $n$ is divisible by a large power of 2, or the binary expansion of $n$ has many long unbroken runs of the digits 0 or 1. Fortunately, the number of indices $(u,v)$ where this happens is small, thanks to the assumption that $r$ is odd. Precise statements in support of this phenomenon appear in Lemmas \ref{I1-lemma}, \ref{digit-lemma-2} and \ref{U1vtau-cardinality-lemma}.
\vskip0.1in
\item For the vast majority of choices of $(u, v) \in \{1, \ldots, N\}^2$, the Fourier coefficient $\widehat{\mu}(hr^u(r^v-1))$ is much smaller than its maximum possible value given by \eqref{mu-Rajchman}, in fact, small enough to meet the summability requirement of the Davenport-Erd\H{o}s-LeVeque criterion. The relevant estimate is captured in Lemma \ref{J1-lemma}.         
\end{itemize} 
Let us make the above statements precise. For the remainder of the article, we will fix $N \in \mathbb N$ and define an integer $R$ that is related to $N$ by the inequality  
\begin{equation} \label{N-and-R}
2^{R-1} < N \leq 2^R \quad \text{ for some } R \in \mathbb N.   
\end{equation} 
Given any $r \geq 3$, we decompose $\{1, 2, \ldots, N\}$ as follows: 
\begin{align}
\mathbb V_1 = \mathbb V_1(r, N) &:= \bigl\{1 \leq v \leq N : 2^{R_0} \, | \, r^{v}-1 \bigr\}, \text{ where } R_0 = \lfloor \sqrt{R} \rfloor, \label{def-W1} \\ 
\mathbb V_2 = \mathbb V_2(r, N) &:= \bigl\{1, \ldots, N\} \setminus \mathbb V_1(r, N).  \label{def-W2}  
\end{align}
This results in a corresponding decomposition of the quantity $\mathtt I(h;r, N)$ defined in \eqref{I-est}: 
\begin{align} 
\mathtt I(h; r, N) &= \mathtt I_1(h; r, N) + \mathtt I_2(h;r, N), \text{ where } \label{I-decomp} \\ 
\mathtt I_i(h;r, N) &:=  \sum_{v \in \mathbb V_i} \sum_{u=1}^{N}\bigl| \widehat{\mu}(hr^u(r^v-1)) \bigr|, \quad i=1,2. \label{def-Ii}
\end{align} 
The crucial estimates concerning $\mathtt I_i(h;r, N)$ are summarized in the two propositions below. 
\begin{proposition} \label{Ii-lemma-parta}
For any choice of parameters $\mathcal K$ and $\pmb{\varepsilon}$ obeying \eqref{block-def} and \eqref{epsilon-assumption}, the probability measure $\mu = \mu[\mathcal K, \pmb{\varepsilon}]$ given by \eqref{def-mu} obeys the following property. 
\vskip0.1in 
\noindent Given any odd integer $r \geq 3$ and $h \in \mathbb Z \setminus \{0\}$, there exist positive constants $c_0, C_0$ depending only on $r$ and $h$ such that for any $N \in \mathbb N$ in the range \eqref{N-and-R}, the sum $\mathtt I_1(h;r, N)$ given by \eqref{def-Ii} obeys the following estimate:  
\begin{equation} 
\mathtt I_1(h;r, N) \leq C_0 N^{2 - \frac{c_0}{\sqrt{\log_2 N}}} \leq C_0 2^{2R - c_0\sqrt{R}}. \label{I1-est}
\end{equation} 
\end{proposition}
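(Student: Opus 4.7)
The plan is to bound $\mathtt I_1(h;r, N)$ using only the trivial Fourier estimate $|\widehat\mu(n)| \leq 1$, paired with a sparsity bound on the index set $\mathbb V_1(r, N)$ that arises purely from the assumption that $r$ is odd. In particular, neither the finer Fourier-decay information of Lemma \ref{Lyons-lemma} nor any specific structure of $\mathcal K$ and $\pmb\varepsilon$ will be used---this is why the proposition holds in such generality, for arbitrary admissible parameters.

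Since $\mu$ is a probability measure, $|\widehat\mu(n)| \leq 1$ for every $n \in \mathbb Z$, whence
\[ \mathtt I_1(h;r, N) \;=\; \sum_{v \in \mathbb V_1} \sum_{u=1}^N \bigl|\widehat\mu(hr^u(r^v-1))\bigr| \;\leq\; N \cdot |\mathbb V_1(r, N)|. \]
The task thus reduces to establishing a sparsity bound of the form $|\mathbb V_1(r, N)| \leq C_r \cdot N \cdot 2^{-R_0}$ with $C_r$ depending only on $r$. By definition, $v \in \mathbb V_1$ iff $v_2(r^v - 1) \geq R_0$, where $v_2(\cdot)$ denotes the $2$-adic valuation. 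The key number-theoretic input I would invoke is the Lifting-the-Exponent Lemma at the prime $2$, which applied to the odd integer $r$ yields a constant $\alpha_r \in \mathbb N$ (depending only on $r$) such that
\[ v_2(r^v - 1) \;\leq\; v_2(v) + \alpha_r \qquad \text{for every } v \in \mathbb N. \]
Explicitly: for $r \equiv 1 \pmod 4$, one has the identity $v_2(r^v - 1) = v_2(r-1) + v_2(v)$; for $r \equiv 3 \pmod 4$ with $v$ even, $v_2(r^v - 1) = v_2(r-1) + v_2(r+1) + v_2(v) - 1$; and for $r \equiv 3 \pmod 4$ with $v$ odd, $v_2(r^v - 1) = 1$. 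Setting $\alpha_r := v_2(r-1) + v_2(r+1)$ handles all three cases uniformly.

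Consequently, $v \in \mathbb V_1$ forces $v_2(v) \geq R_0 - \alpha_r$, so $v$ must be a multiple of $2^{\max(0,\, R_0 - \alpha_r)}$. Absorbing the small-$R$ regime into the constant $C_0$, this yields $|\mathbb V_1(r, N)| \leq 2^{\alpha_r} \cdot N \cdot 2^{-R_0}$. Combined with the preceding display and the inequalities $N \leq 2^R$, $R_0 \geq \sqrt R - 1$ coming from \eqref{N-and-R} and \eqref{def-W1}, I would conclude
\[ \mathtt I_1(h;r, N) \;\leq\; 2^{\alpha_r} \cdot \frac{N^2}{2^{R_0}} \;\leq\; 2^{\alpha_r + 1} \cdot 2^{2R - \sqrt R}, \]
which is the second inequality of \eqref{I1-est} for any $c_0 \in (0,1)$ after subsuming the prefactor into $C_0$. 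The first inequality of \eqref{I1-est} then follows from the elementary identity $2^{-\sqrt R} = N^{-1/\sqrt{\log_2 N}}$ up to constants, using \eqref{N-and-R}.

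The only delicate part of the argument is the casework in the LTE identities depending on the residue of $r$ modulo $4$, and this is standard; everything else is bookkeeping. Observe that the bound is in fact uniform in $h$ and uses $\mu$ only through $\|\widehat\mu\|_\infty \leq 1$, so the stated dependence of $C_0, c_0$ on $h$ is a convenience of the formulation rather than a necessity of the proof. The sharper estimate \eqref{mu-Rajchman} for $\widehat\mu$ will instead play its decisive role in the companion Proposition \ref{Ii-lemma-partb}, where the index set $\mathbb V_2$ is dense and a trivial pointwise bound no longer suffices.
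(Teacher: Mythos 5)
Your proposal is correct and tracks the paper's argument almost line by line: the sum $\mathtt I_1$ is bounded by the trivial estimate $|\widehat\mu| \leq 1$ times a sparsity bound on $\mathbb V_1(r,N)$ of order $N 2^{-R_0}$, with no finer Fourier information and no special properties of $\mathcal K$ or $\pmb{\varepsilon}$ used. The single point of divergence is the input for the sparsity bound. The paper first identifies $\mathbb V_1$ as the set of multiples of $\mathrm{ord}_{2^{R_0}}(r)$ in $\{1,\dots,N\}$ via Lemma \ref{hw-lemma}, then invokes Schmidt's Lemma \ref{Schmidt-lemma} (the lower bound $\mathrm{ord}_{2^k}(r) \geq c_0(r) 2^k$, cited from \cite[Lemma 4]{s60}) to conclude. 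You instead apply the Lifting-the-Exponent identity at $p=2$, obtaining $v_2(r^v-1) \leq v_2(v) + \alpha_r$ directly, which forces every $v \in \mathbb V_1$ to be divisible by $2^{R_0-\alpha_r}$. These two ingredients are equivalent — the LTE computation is precisely what underlies Schmidt's Lemma 4 — so the substitution is essentially cosmetic: your route is more self-contained since it re-derives the arithmetic fact rather than citing it, while the paper's is shorter by deferring to \cite{s60}. Everything else — the $|\widehat\mu| \leq 1$ reduction, absorbing small $R$ into $C_0$, and the bookkeeping converting $2^{2R-\sqrt R}$ into $N^{2-c_0/\sqrt{\log_2 N}}$ — is identical in both treatments.
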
 
\begin{proposition} \label{Ii-lemma-partb}
There exists a choice of parameters $\mathcal K$ and $\pmb{\varepsilon}$ obeying \eqref{block-def} and \eqref{epsilon-assumption}, specifically \eqref{choice-of-K-epsilon}, for which the measure $\mu = \mu[\mathcal K, \pmb{\varepsilon}]$ given by \eqref{def-mu} obeys the following property.  
\vskip0.1in
\noindent For any odd integer $r \geq 3$ and  $h \in \mathbb Z \setminus \{0\}$, one can find constants $c_0, C_0 > 0, \gamma > 1$ depending only on the above quantities such that for any $N \in \mathbb N$ in the range \eqref{N-and-R}, the sum $\mathtt I_2(h;r, N)$ given by \eqref{def-Ii} obeys the following estimate:  
\begin{align}
\mathtt I_2(h;r, N) &\leq C_0 \bigl[  N^{2- \frac{c_0}{\sqrt{\log_2 N}}} + N^{2} (\log_2 N)^{-\gamma} \bigr] \nonumber \\ 
&\leq C_0 2^{2R} \bigl[2^{- c_0 \sqrt{R}} + R^{-\gamma} \bigr]. \label{I2-est} 
\end{align}  
\end{proposition}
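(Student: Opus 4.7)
The plan is to bound $\mathtt I_2(h; r, N) = \sum_{v \in \mathbb V_2} \sum_{u=1}^{N} |\widehat{\mu}(\eta)|$, with $\eta = \eta(u,v) := hr^u(r^v - 1)$, by leveraging the defining feature of $\mathbb V_2$: every $v \in \mathbb V_2$ satisfies $\nu_2(r^v - 1) < R_0 = \lfloor \sqrt R\rfloor$, where $\nu_2$ denotes the $2$-adic valuation. Since $r$ is odd (hence so is $r^u$), one has $\nu_2(\eta) = \nu_2(h) + \nu_2(r^v - 1) < \nu_2(h) + R_0$, so the binary expansion of $\eta$ has at most $\nu_2(h) + R_0$ trailing zeros, placing the arithmetic complexity of $\eta$ well inside the ``active range'' of blocks $\mathcal B_\ell$.

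Next, I apply the product representation \eqref{mu-hat-infinite-product} and examine each factor $\varepsilon_\ell + (1-\varepsilon_\ell) \mathfrak E_\ell(\eta)$. Recalling from \eqref{def-El} that $|\mathfrak E_\ell(\eta)| = \prod_{k \in \mathcal B_\ell} |\cos(\pi 2^{-k} \eta)|$, each factor $|\cos(\pi 2^{-k}\eta)|$ is close to $1$ only when the bits of $\eta$ at positions $0, 1, \ldots, k-1$ form a long constant run (all $0$'s or all $1$'s). In line with the strategy outlined in Section \ref{I-est-section}, I will classify a block $\mathcal B_\ell$ as \emph{good} for $\eta$ if the binary digits of $\eta$ in $\mathcal B_\ell$ are sufficiently varied to force $|\mathfrak E_\ell(\eta)| \leq \tfrac{1}{2}$. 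Combined with the factor $\varepsilon_\ell$ arising from the unique block containing $\nu_2(\eta) + 1$ (where $\mathfrak E_\ell(\eta) = 0$, since $\{2^{-k}\eta\} = 1/2$ precisely at $k = \nu_2(\eta)+1$), the product over sufficiently many good blocks intersecting $[1, \log_2 |\eta|]$ will be bounded by a negative power of $R \asymp \log_2 N$.

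The central number-theoretic input is a quantitative version of Schmidt's theorem \cite{s60}, exploiting $2 \not\sim r$ for $r$ odd, to ensure that the binary digits of $r^u(r^v - 1)$ inside the windows $\mathcal B_\ell$ are well-distributed for all but a small exceptional set of $(u, v)$. Combining this digit equidistribution with the admissibility of $(\mathcal K, \pmb{\varepsilon})$ at exponent $\gamma > 1$ --- a property verified for the choice \eqref{choice-of-K-epsilon} in Lemma \ref{special-choices-lemma} --- should yield the pointwise bound $|\widehat{\mu}(\eta)| \leq C R^{-\gamma}$ for pairs $(u, v)$ outside the exceptional set. Summing over non-exceptional pairs produces at most $N^2 \cdot C R^{-\gamma} = C_0 N^2 (\log_2 N)^{-\gamma}$, matching the second term of \eqref{I2-est}. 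The cardinality of the exceptional set --- those $(u,v)$ for which the binary expansion of $\eta$ exhibits too many long constant runs across too many blocks --- will be bounded by $C N^{2 - c_0/\sqrt R}$ via Schmidt-type counting analogous to the argument underlying Lemma \ref{U1vtau-cardinality-lemma}; applying $|\widehat\mu| \leq 1$ on this set yields the first term of \eqref{I2-est}.

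The principal obstacle is to make the Schmidt equidistribution quantitative enough to obtain a decay exponent $\gamma$ strictly greater than $1$. Each good block contributes a factor $\leq \tfrac{1}{2}$, so only a handful of good blocks would heuristically suffice to force $|\widehat\mu(\eta)|$ below any fixed negative power of $R$; the difficulty lies in ensuring that, for the vast majority of pairs $(u, v) \in \{1, \ldots, N\} \times \mathbb V_2$, a controlled lower bound on the number of good blocks holds simultaneously. Calibrating this block-count against the rapid growth of $K_\ell = K^{\ell \omega(\ell)}$, the decay $\varepsilon_\ell = 1/\ell$, and the Schmidt error terms so as to extract $\gamma > 1$ is the technical heart of the argument and is precisely where the admissibility condition on $(\mathcal K, \pmb{\varepsilon})$ must be invoked.
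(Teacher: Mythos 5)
Your proposal follows the same broad outline as the paper --- split $(u,v)$ into an exceptional set counted via Schmidt, and a main set where $|\widehat\mu(\eta)|$ is small thanks to digit variation plus admissibility --- but there is a quantitative gap in how you plan to extract the $R^{-\gamma}$ decay. You classify a block $\mathcal B_\ell$ as good when the digits of $\eta$ in $\mathcal B_\ell$ vary enough to force $|\mathfrak E_\ell(\eta)| \leq \tfrac{1}{2}$, and then argue that a handful of good blocks would heuristically force $|\widehat\mu(\eta)|$ below any fixed negative power of $R$. This does not work out. The factor from block $\ell$ in the truncated product \eqref{mu-hat-infinite-product-2} is $\varepsilon_\ell + (1-\varepsilon_\ell)|\mathfrak E_\ell|$, which under your bound is at most $\tfrac{1}{2}+\varepsilon_\ell$. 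The number of active blocks $\mathtt T-\mathtt t-1$ satisfies $\mathtt T-1 \leq (\log R)/\log K$ for $K_\ell = K^{\ell\omega(\ell)}$, so even if every block were good, your pointwise bound would be of order $(\tfrac{1}{2})^{(\log R)/\log K} = R^{-(\log 2)/\log K}$, and with $K\geq 10$ the exponent is below $0.31$ --- far from $\gamma>1$. The cross-terms in the expansion of the product can be worse still, because replacing a small $\varepsilon_\ell$ by the factor $\tfrac12$ may \emph{increase} the term.

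The paper avoids this by demanding far more from a good block. The definition \eqref{def-U2} of $\mathbb U_2(v)$ requires a positive \emph{proportion} $\alpha$ of consecutive digit positions in $\overline{\mathcal B}_\ell$ to witness a change; Lemma \ref{cos-lemma} then yields $|\mathfrak E_\ell(\xi)| \leq (\sqrt{2}/2)^{\alpha(K_\ell-K_{\ell-1})} \leq 2^{-cK_{\mathtt t}} \leq 2^{-c\sqrt{R}}$, a super-exponentially small bound rather than a fixed constant. In the binomial expansion \eqref{sum}, every term involving at least one $|\mathfrak E_\ell|$ factor is then $\leq 2^{-c\sqrt{R}}$, and only polynomially many such terms exist, so their total contribution is negligible (Lemma \ref{J1-lemma}). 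All of the $R^{-\gamma}$ decay comes from the single pure-$\varepsilon$ term $\prod_\ell\varepsilon_\ell$, and that is exactly where admissibility \eqref{tT-condition} is used (Lemma \ref{J0-lemma}). In short, the two ingredients you identify --- digit variation and admissibility --- are both essential, but they control disjoint parts of the sum, and the digit-variation bound needs to be much stronger than $|\mathfrak E_\ell|\leq\tfrac{1}{2}$. Finally, your observation that $\mathfrak E_\ell(\eta)=0$ at the block containing position $\nu_2(\eta)+1$ is correct but not usable: since $\nu_2(\eta)+1 \leq \nu_2(h)+\lfloor\sqrt{R}\rfloor+1 \leq K_{\mathtt t}+O(1)$, that index typically falls in $\mathcal B_{\mathtt t}$ or earlier, which is excluded from the truncated product in \eqref{mu-hat-infinite-product-2}.
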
 
\begin{lemma} \label{special-choices-lemma}
The conclusion of Proposition \ref{Ii-lemma-partb} holds for the choice \eqref{choice-of-K-epsilon} of $\mathcal K$ and $\pmb{\varepsilon}$.  
\end{lemma}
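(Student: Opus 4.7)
The plan is to unpack the definition of ``admissibility with exponent $\gamma > 1$'' referenced on page \pageref{tT-condition} of Section \ref{special-choices-section} and verify each constituent condition for the explicit parameters given by \eqref{choice-of-K-epsilon}. Since Lemma \ref{special-choices-lemma} is a verification-type statement, the proof will be primarily computational, exploiting the arithmetic structure of $K_\ell = K^{\ell\omega(\ell)}$ with $\omega(\ell) = \lfloor\sqrt{\log \ell}\rfloor$ and $\varepsilon_\ell = 1/\ell$.

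First I would dispatch the baseline requirements \eqref{block-def} and \eqref{epsilon-assumption}. The sequence $\{K_\ell\}$ is strictly increasing and diverges to infinity, since $\ell\omega(\ell)$ is eventually strictly increasing in $\ell$ and $\omega(\ell) \to \infty$; and $\sum \varepsilon_\ell = \sum 1/\ell = +\infty$ is the harmonic series. Thus $\mu = \mu[\mathcal K, \pmb{\varepsilon}]$ is well-defined and, by Lemma \ref{Lyons-lemma}, Rajchman. As a routine by-product, since $\omega$ is non-decreasing,
\[ \frac{K_{\ell-1}}{K_\ell} \; = \; K^{(\ell-1)\omega(\ell-1) - \ell\omega(\ell)} \; \leq \; K^{-\omega(\ell)}, \]
which decays faster than any fixed negative power of $\ell$. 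In particular \eqref{K-ratio} holds with room to spare, and any finite-order moment requirement of the form $\sum_\ell (K_{\ell-1}/K_\ell)^a < \infty$ is trivially met; this takes care of whatever growth/ratio hypotheses on $\mathcal K$ admissibility imposes.

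The substantive part is verifying the joint condition linking $\mathcal K$ and $\pmb{\varepsilon}$ that yields the exponent $\gamma > 1$ in \eqref{I2-est}. The correspondence between $N$ and the relevant block index $\ell$ comes from the window $2^{K_{\ell-1}-1} \leq N < 2^{K_\ell-1}$ used throughout Section \ref{normality-section-1}, which inverts to $\log_2 N \asymp K_\ell = K^{\ell\omega(\ell)}$, i.e.\ $\log\log_2 N \asymp \ell\omega(\ell)\log K$, and consequently
\[ \omega(\ell) \asymp \sqrt{\log \ell} \approx \sqrt{\log\log\log N}, \qquad \ell \; \asymp \; \frac{\log\log N}{(\log K)\,\sqrt{\log\log\log N}}. \]
Substituting into $\varepsilon_\ell = 1/\ell$ produces an explicit upper bound on $\varepsilon_\ell$ in terms of $\log_2 N = R$, which I would then feed into whichever inequality of the form $\varepsilon_\ell^{a} \lesssim R^{-\gamma}$ or $\sum_\ell \varepsilon_\ell^a f(K_\ell) < \infty$ is encoded by admissibility. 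The extra $\omega(\ell)$-factor in the exponent of $K_\ell$ is precisely what breaks $\gamma = 1$: without it one would have $\ell \asymp \log\log N$ and $\varepsilon_\ell \asymp 1/\log\log N$, giving only $\gamma = 1$ and a borderline-divergent series; the $\sqrt{\log\ell}$ in $\omega(\ell)$ buys enough extra decay to push $\gamma$ strictly above $1$.

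The main obstacle I anticipate is not any single estimate but rather the careful book-keeping. The quantities $\ell$, $\omega(\ell)$, and $K_\ell$ enter the admissibility criteria with different asymptotic weights, and since $\omega(\ell) = \lfloor\sqrt{\log \ell}\rfloor$ is piecewise constant (jumping only at $\ell \approx e^{n^2}$), one has to track constants uniformly across these plateaus to ensure the estimates are monotone in the required sense. The delicate point is guaranteeing that the resulting $\gamma$ is \emph{strictly} larger than $1$ with positive margin, so that $\sum_N N^{-1}(\log_2 N)^{-\gamma} < \infty$; this margin can always be secured by choosing the base integer $K \geq 10$ in \eqref{choice-of-K-epsilon} sufficiently large from the outset, which inflates the exponent $\ell\omega(\ell)\log K$ and tightens every inequality in the admissibility chain.
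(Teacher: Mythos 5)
Your general strategy --- unpack ``admissibility with exponent $\gamma>1$'' and verify each clause for the explicit choice \eqref{choice-of-K-epsilon} --- is the right one, and your dispatch of the baseline requirements \eqref{block-def}, \eqref{epsilon-assumption} is fine. However, the substantive part of your argument has several genuine gaps.

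First, the admissibility condition you need is the concrete inequality \eqref{tT-condition}:
\[
\prod_{\ell=\mathtt t+1}^{\mathtt T-1}\varepsilon_\ell < K_{\mathtt T}^{-\gamma}
\quad\text{for all large }R,
\]
where $\mathtt t,\mathtt T$ are the indices fixed by $\sqrt{R}\in(K_{\mathtt t-1},K_{\mathtt t}]$ and $R\in(K_{\mathtt T-1},K_{\mathtt T}]$, with $2^{R-1}<N\le 2^R$. Your proposal instead works with the Lyons window $2^{K_{\ell-1}-1}\le N<2^{K_\ell-1}$, which is the window used for the \emph{Rajchman decay estimate in Section \ref{proof-sketch-section}}, not for admissibility; consequently you never bring $\mathtt t$ or $\mathtt T$ into play, and the calculation you gesture toward is not the one that needs to be done. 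The actual verification rests on the relation $\mathtt T\asymp 2\mathtt t$, which the paper proves in Lemma \ref{special-tT-lemma} using the slow-growth Lemma \ref{omega-lemma} and Corollary \ref{Omega-cor} for $\omega(x)=\lfloor\sqrt{\log x}\rfloor$. That comparison is what makes $\prod_{\ell=\mathtt t+1}^{\mathtt T-1}\ell^{-1}\le\mathtt t^{-(\mathtt T-\mathtt t-1)}$ beat $K_{\mathtt T}^{-\gamma}=K^{-\gamma\mathtt T\omega(\mathtt T)}$, since both exponents are then of order $\mathtt T$, with a $\log\mathtt t\asymp\log\mathtt T$ on the left against a $\sqrt{\log\mathtt T}$ on the right. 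You do not establish, or even mention, the relation between $\mathtt t$ and $\mathtt T$.

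Second, two of your heuristic claims run in the wrong direction. (i) You assert that the factor $\omega(\ell)$ in the exponent of $K_\ell$ ``is precisely what breaks $\gamma=1$.'' In fact, larger $K_\ell$ makes $K_{\mathtt T}^{-\gamma}$ smaller and, by shrinking $\mathtt t,\mathtt T$ for fixed $R$, makes the product $\prod\varepsilon_\ell$ larger; so $\omega(\ell)$ makes \eqref{tT-condition} \emph{harder}, not easier, and the paper explicitly notes that the simpler choice $K_\ell=K^\ell$, $\varepsilon_\ell=1/\ell$ is \emph{also} admissible for every $\gamma>0$. (ii) You claim that taking $K$ large ``tightens every inequality in the admissibility chain.'' The final inequality the paper needs is
\[
\gamma\,(\log K)\sqrt{\log\mathtt T}\;\le\;\tfrac{c_0}{2}\log\!\bigl((\mathtt T-1)/2\bigr),
\]
and the $\log K$ sits on the side you want small; increasing $K$ makes this worse. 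The margin for $\gamma>1$ comes from letting $\mathtt T$ (equivalently $R$, equivalently $N$) be sufficiently large, not from choosing $K$ large. This misattribution is not cosmetic: it would lead you to believe the choice of $K$ is a degree of freedom that can compensate for an incorrect estimate elsewhere, which it cannot.
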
 
\noindent The proofs of Propositions \ref{Ii-lemma-parta} and \ref{Ii-lemma-partb} will be presented in Sections \ref{I1-lemma-proof-section} and \ref{I2-lemma-proof-section} respectively. Lemma \ref{special-choices-lemma} is verified in Section \ref{special-choices-section}. Assuming the estimates \eqref{I1-est} and \eqref{I2-est} for now, the proof of Proposition \ref{mu-odd-normal} is completed as follows. 
\subsection{Proof of Proposition \ref{mu-odd-normal}} 
Let us choose $\mathcal K$ and $\pmb{\varepsilon}$ as in \eqref{choice-of-K-epsilon}. Lemma \ref{special-choices-lemma} then asserts the validity of the conclusions of both Propositions \ref{Ii-lemma-parta} and \ref{Ii-lemma-partb}. We will establish the convergence of the sum \eqref{def-Ii} for this choice of $\mathcal K$ and $\pmb{\varepsilon}$.  
\vskip0.1in 
\noindent Since the summands of $\mathtt I$ are all positive, $\mathtt I(h;r, N)$ is monotone non-decreasing in $N$. We decompose the sum in \eqref{I-est} in dyadic blocks, using the decomposition \eqref{I-decomp} and the estimates \eqref{I1-est}, \eqref{I2-est} to bound the contribution from each block. This yields  
\begin{align*}
 \sum_{N=1}^{\infty} N^{-3} \mathtt I(h;r, N) &=  \sum_{R=1}^{\infty} \sum_{N = 2^{R-1}}^{2^R - 1} N^{-3}  \mathtt I(h;r, N) \\ 
 &\leq \sum_{R=1}^{\infty} (2^{R-1})^{-3} 2^{R-1} \mathtt I(h; r, 2^R) \\ 
 & = 2^2 \sum_{R=1}^{\infty} 2^{-2R} \bigl[ \mathtt I_1(h; r, 2^R) + \mathtt I_2(h; r, 2^R) \bigr] \\ 
 &\leq C_0 \sum_{R=1}^{\infty} \bigl[ 2^{-c_0\sqrt{R}} + R^{-\gamma} \bigr] < \infty, 
 \end{align*} 
 where the last step follows from the assertions that $c_0 > 0$ and $\gamma > 1$. This completes the proof of Proposition \ref{mu-odd-normal}.
\qed

\section{Estimation of $\mathtt I_1$} \label{I1-lemma-proof-section} 
The goal of this section is to prove Proposition \ref{Ii-lemma-parta}. No special properties of $\mu$ will be employed. The proof follows from the fact that members of $\mathbb V_1$ are relatively sparse in $\{1, 2, \cdots, N\}$. This is made precise in Lemma \ref{I1-lemma}. Some necessary tools are collected en route in Section \ref{integer-order-section}   
\subsection{Order of an integer} \label{integer-order-section} 
For any two coprime integers $a$ and $m$, the {\em{order}} of $a$ (mod $m$), denoted $\text{ord}_m(a)$ is given by
\begin{equation} 
\text{ord}_m(a) := \min\bigl\{n \in \mathbb N: a^n \equiv 1 \; (\text{mod } m) \bigr\}. 
\end{equation}  
Let us record two facts concerning $\text{ord}_m(a)$ which will be useful in the sequel. 
\begin{lemma}[{\cite[Theorem 88]{hw08}}] \label{hw-lemma} 
Suppose that $a, m \in \mathbb N$, with gcd$(a, m) = 1$. Then for any $n \in \mathbb N$, 
\begin{equation*} 
a^n \equiv 1 (\text{mod } m) \quad \text{ if and only if } \quad \text{ord}_m(a) \text{ divides } n.  
\end{equation*} 
\end{lemma}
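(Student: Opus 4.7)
The plan is to prove Lemma \ref{hw-lemma} by the classical division-algorithm argument, which is the standard approach for characterizing when $a^n \equiv 1 \pmod{m}$ in terms of the order. Set $d := \mathrm{ord}_m(a)$, which is well-defined since $\gcd(a, m) = 1$ guarantees that $a$ is a unit in $\mathbb{Z}/m\mathbb{Z}$, hence has finite multiplicative order (e.g.\ by Euler's theorem, the order divides $\varphi(m)$).

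For the easy direction, I would show: if $d \mid n$, write $n = dk$ for some $k \in \mathbb{N}$, and compute
\[
a^n = a^{dk} = (a^d)^k \equiv 1^k = 1 \pmod{m},
\]
using only the definition of $d$ as the order.

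For the converse, suppose $a^n \equiv 1 \pmod{m}$. Apply the division algorithm to write $n = qd + r$ with $q \geq 0$ and $0 \leq r < d$. Then
\[
1 \equiv a^n = (a^d)^q \cdot a^r \equiv a^r \pmod{m}.
\]
Now $r \in \{0, 1, \ldots, d-1\}$ and $a^r \equiv 1 \pmod{m}$. The minimality clause in the definition of $d = \mathrm{ord}_m(a)$ as the smallest positive integer $n$ with $a^n \equiv 1 \pmod{m}$ precludes $r \in \{1, \ldots, d-1\}$, so $r = 0$, i.e.\ $d \mid n$.

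There is essentially no obstacle here; the only subtlety to flag is that the minimality of $d$ is what rules out $1 \leq r < d$, and one must observe that the conclusion $a^r \equiv 1 \pmod m$ with $r \in \{0, \ldots, d-1\}$ is only consistent with $r = 0$ precisely because of the definition of the order. Since this is a textbook fact (cited to \cite{hw08}), the authors likely either omit the proof or restate it in at most a few lines; my proposal matches that expected brevity.
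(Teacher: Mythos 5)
Your proof is correct and is the standard division-algorithm argument for this fact; the paper itself does not reprove the lemma but simply cites \cite[Theorem 88]{hw08}, so there is no authorial proof to compare against. Your reasoning in both directions is sound, and the minimality observation that forces $r=0$ is exactly the right point to emphasize.
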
 
\begin{lemma}[{\cite[Lemma 4]{s60}}] \label{Schmidt-lemma}
For every odd integer $r$, there exists $c_0(r) > 0$ such that 
\begin{equation} 
\text{ord}_{2^k}(r) \geq c_0(r) 2^k \quad \text{ for all } k \in \mathbb N. 
\end{equation} 
\end{lemma}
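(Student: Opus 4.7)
Since $\gcd(r,2)=1$ the order $d_k := \text{ord}_{2^k}(r)$ divides $\varphi(2^k)=2^{k-1}$, so $d_k = 2^{e_k}$ for some $0\le e_k\le k-1$. The goal reduces to showing $e_k \ge k - s(r)$ for some constant $s(r)$ depending only on $r$; this immediately yields $d_k\ge 2^{-s(r)}\cdot 2^k$, giving $c_0(r):=2^{-s(r)}$ (possibly shrunk to absorb the finitely many small values of $k$).

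The engine of the argument is the classical ``lifting-the-exponent'' identity at the prime $2$: for any odd integer $r$ and any $n\in\mathbb N$,
\begin{equation*}
v_2(r^n-1) \;=\; \begin{cases} v_2(r-1) & \text{if $n$ is odd,} \\ v_2(r-1)+v_2(n) & \text{if $n$ is even and } r\equiv 1\pmod 4,\\ v_2(r+1)+v_2(n) & \text{if $n$ is even and } r\equiv 3\pmod 4.\end{cases}
\end{equation*}
One proves this by induction on $v_2(n)$, writing $r^{2m}-1=(r^m-1)(r^m+1)$ and observing that exactly one of $r^m\pm 1$ contributes a single factor of $2$ while the other contributes the rest. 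Set $s(r):=\max\{v_2(r-1),\,v_2(r+1)\}\ge 1$, a fixed integer depending only on $r$.

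Now suppose $k>s(r)$ and $d=d_k$. By Lemma \ref{hw-lemma}, $d\mid n$ whenever $r^n\equiv 1\pmod{2^k}$, and in particular $v_2(r^d-1)\ge k$. If $d$ were odd, the formula would give $v_2(r^d-1)=v_2(r-1)\le s(r)<k$, a contradiction; hence $d$ is even, and the relevant case of LTE yields
\begin{equation*}
k \;\le\; v_2(r^d-1) \;=\; s(r) + v_2(d),
\end{equation*}
so $v_2(d)\ge k-s(r)$. Since $d=2^{e_k}$ is itself a power of $2$, this is the same as $e_k\ge k-s(r)$, which is exactly what we wanted. For the finitely many $k\le s(r)$, we use $d_k\ge 1$ and absorb these into a smaller constant $c_0(r)$.

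The only technical obstacle is the LTE identity; one may either invoke it as a standard lemma or spend a few lines verifying it by hand through the factorization $r^{2m}-1=(r^m-1)(r^m+1)$ and a parity check modulo $4$. Everything else in the argument is a routine manipulation of $2$-adic valuations, and no property of the measure $\mu$ or of the sequences $\mathcal K,\pmb{\varepsilon}$ is needed.
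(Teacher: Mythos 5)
Your proof is correct. The paper does not supply its own proof of this lemma; it simply cites Schmidt \cite[Lemma~4]{s60}, so there is no argument in the text to compare against. What you have written is a clean, self-contained proof of the specialization needed here (modulus a power of $2$, odd base $r$), and every step checks out: the order $d_k$ divides $\varphi(2^k)=2^{k-1}$ and hence is a power of $2$; the lifting-the-exponent identity at $p=2$ is correctly stated (for even $n$, $v_2(r^n-1)=v_2(r-1)+v_2(r+1)+v_2(n)-1$, which collapses to $s(r)+v_2(n)$ since exactly one of $r\pm1$ has $2$-adic valuation $1$); the case $d=1$ is ruled out for $k>s(r)$; and the even case gives $v_2(d)\geq k-s(r)$, yielding $c_0(r)=2^{-s(r)}$. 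The only presentational quibble is that you write $s(r)+v_2(d)$ directly in the final display, silently using the observation that whichever of $v_2(r\pm1)$ appears in the relevant LTE branch is exactly $s(r)$; spelling this out in one sentence would make the step airtight. Schmidt's original Lemma~4 is stated in greater generality (for any pair of multiplicatively independent bases rather than just $r$ odd and $s=2$), so your argument is a deliberately narrower, more elementary route to exactly what the paper needs. No gap.
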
 
\noindent We will use these two facts to control the number of summands in $\mathtt I_1$. 
\subsection{Cardinality of $\mathbb V_1(r, M)$}
\begin{lemma} \label{I1-lemma}
For every odd integer $r \geq 3$, there exists a constant $C_0(r) > 0$ with the following property. For any $N \in \mathbb N$ in the range \eqref{N-and-R}, let $\mathbb V_1(r, N)$ be as in \eqref{def-W1}. Then 
\begin{equation} \label{W1-cardinality}
\#(\mathbb V_1(r, N)) \leq C_0(r) N 2^{-\sqrt{R}}. 
\end{equation} 
In particular, Proposition \ref{Ii-lemma-parta} holds. 
\end{lemma}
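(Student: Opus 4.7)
The plan is to reduce the counting problem to a divisibility condition via Lemma \ref{hw-lemma}, and then invoke Schmidt's bound (Lemma \ref{Schmidt-lemma}) to extract the exponential sparsity of $\mathbb V_1(r, N)$ in $\{1, \ldots, N\}$.

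Since $r$ is odd, $\gcd(r, 2^{R_0}) = 1$, so Lemma \ref{hw-lemma} applies with $m = 2^{R_0}$: the condition $2^{R_0} \mid r^v - 1$ defining $\mathbb V_1(r, N)$ is equivalent to $\mathrm{ord}_{2^{R_0}}(r) \mid v$. Writing $d := \mathrm{ord}_{2^{R_0}}(r)$, this identifies $\mathbb V_1(r, N)$ as the set of positive multiples of $d$ up to $N$, giving $\#\mathbb V_1(r, N) \leq N/d$. Next, Lemma \ref{Schmidt-lemma} yields $d \geq c_0(r) 2^{R_0}$. Since $R_0 = \lfloor \sqrt{R} \rfloor \geq \sqrt{R} - 1$, we obtain $d \geq \tfrac{1}{2} c_0(r)\, 2^{\sqrt{R}}$, and therefore $\#\mathbb V_1(r, N) \leq C_0(r)\, N\, 2^{-\sqrt{R}}$, which is \eqref{W1-cardinality}.

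For the concluding ``in particular'' assertion, I would combine this cardinality estimate with the trivial bound $|\widehat{\mu}(n)| \leq 1$ (valid since $\mu$ is a probability measure):
\begin{equation*}
\mathtt I_1(h;r,N) = \sum_{v \in \mathbb V_1} \sum_{u=1}^{N} \bigl|\widehat{\mu}(hr^u(r^v-1))\bigr| \;\leq\; \#\mathbb V_1(r,N) \cdot N \;\leq\; C_0(r)\, N^2\, 2^{-\sqrt{R}}.
\end{equation*}
To recast $2^{-\sqrt{R}}$ in the form \eqref{I1-est}, observe that \eqref{N-and-R} gives $\log_2 N \leq R$, so $\sqrt{R} \geq \sqrt{\log_2 N}$ and $2^{-\sqrt{R}} \leq 2^{-\sqrt{\log_2 N}} = N^{-1/\sqrt{\log_2 N}}$. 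Hence $\mathtt I_1(h;r,N) \leq C_0(r)\, N^{2 - 1/\sqrt{\log_2 N}}$, and the alternate form $C_0 \, 2^{2R - c_0 \sqrt{R}}$ follows from $N \leq 2^R$ with $c_0 = 1$.

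No obstacle is anticipated: the argument is a direct consequence of the two elementary facts quoted in Section \ref{integer-order-section}. The only substantive input is Schmidt's exponential lower bound on $\mathrm{ord}_{2^k}(r)$ for odd $r$, which is precisely the arithmetic property that isolates odd bases from even ones and drives the sparsity of $\mathbb V_1$. Notably, no property of $\mu$ beyond being a probability measure is needed at this stage; the finer structure of $\widehat{\mu}$ will only enter in the analysis of $\mathtt I_2$.
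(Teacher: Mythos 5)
Your proof is correct and takes essentially the same route as the paper: reduce membership in $\mathbb V_1$ to divisibility by $\text{ord}_{2^{R_0}}(r)$ via Lemma \ref{hw-lemma}, invoke Schmidt's lower bound (Lemma \ref{Schmidt-lemma}), handle the floor in $R_0$, and combine with $|\widehat\mu|\leq 1$. The only addition is the explicit computation converting $2^{-\sqrt{R}}$ to $N^{-1/\sqrt{\log_2 N}}$, which the paper leaves implicit; that step is also correct.
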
  
\begin{proof} 
Let us observe, from the definition \eqref{def-W1} of $\mathbb V_1$ and Lemma \ref{hw-lemma}, that 
\begin{align*} 
\mathbb V_1(r, N) &= \bigl\{v \in \{1, 2, \ldots, N\} : r^v = 1 \; \text{mod } 2^{R_0} \bigr\} \\
&= \bigl\{v \in \{1, 2, \ldots, N\} : v \text{ is a multiple of } \text{ord}_{2^{R_0}}(r) \bigr\}.
\end{align*}
This means that 
\[ \#\bigl(\mathbb V_1(r, N)\bigr) \leq N\bigl[\text{ord}_{2^{R_0}}(r) \bigr]^{-1} \leq [c_0(r)]^{-1} N2^{-R_0},  \]    
where the last step follows from the lower bound on the order of $r$ given by Lemma \ref{Schmidt-lemma}. Since $R_0 = \lfloor \sqrt{R} \rfloor \geq \sqrt{R} - 1$,
we arrive at the desired inequality \eqref{W1-cardinality} with $C_0(r) = 2/c_0(r)$. 
\vskip0.1in 
\noindent Combining \eqref{W1-cardinality} with the trivial bound $|\widehat{\mu}| \leq 1$ and the range \eqref{N-and-R} of $N$ leads to
\[ \mathtt I_1(h;r, N) \leq  \sum_{u=1}^N \#(\mathbb V_1(r, N)) \leq N \#(\mathbb V_1(r, N)) \leq C_0 2^{2R-\sqrt{R}}.  \]
This establishes \eqref{I1-est}, completing the proof of Proposition \ref{Ii-lemma-parta} and hence of Lemma \ref{I1-lemma}.  
\end{proof}

\section{Estimation of $\mathtt I_2$} \label{I2-lemma-proof-section} 
As mentioned in Section \ref{I-est-section}, the estimation of $\mathtt I_2$ will require a finer analysis of the Fourier coefficients $\widehat{\mu}$. 
Before heading out to the actual estimates, let us first recall the formula for $\widehat{\mu}$ from \eqref{mu-hat-infinite-product} and \eqref{def-El}, and pause for a moment to study the nature of the product $\mathfrak E_{\ell}$ in \eqref{def-El}. It is instructive to consider two extreme cases, which motivate the estimation procedure. 
\begin{itemize} 
\item Suppose that the dyadic digits $\pmb{\eta}_{\ell} := \{\mathtt d_j(\eta) :  j \in \mathcal B_{\ell}\}$ of $\eta$ are all zero, where $\mathcal B_{\ell}$ is the $\ell^{\text{th}}$ block of indices given by \eqref{block-def}. Thus for $j \in \mathcal B_{\ell}+1$, we see that 
\begin{align*}
&\bigl\{2^{-j} \eta \bigr\} = 2^{-j} \sum_{i = 0}^{j-1} \mathtt d_i(\eta) 2^i = \sum_{i = 0}^{K_{\ell-1}} \mathtt d_i(\eta) 2^{i-j} < 2^{K_{\ell-1} + 1 - j},  \text{ which implies } \\  
&\sum_{j \in \mathcal B_{\ell}+1} \bigl\{ 2^{-j} \eta \bigr\}^2 \leq \frac{1}{4}, \; \text{ and therefore } \; \sum_{k \in \mathcal B_{\ell}} \sin^{2} (\pi \eta 2^{-k-1}) \leq C_0, 
\end{align*} 
for some absolute constant $C_0 > 0$. This in turn means that each factor in the product
\begin{equation} \label{cos-product} \bigl| \mathfrak E_{\ell}(\eta) \bigr| = \prod_{k \in \mathcal B_{\ell}} \bigl| \cos (\pi \eta 2^{-k} )\bigr| = \prod_{k \in \mathcal B_{\ell}} \left|1 - 2 \sin^{2}(\pi \eta 2^{-k-1})\right|  \end{equation} 
is close enough to 1 to ensure that $\mathfrak E_{\ell}$ is bounded above and below by a constant independent of $\ell$. The same feature holds if the digits in $\pmb{\eta}_{\ell}$ are all 1. In other words, long unbroken runs of single digits in $\pmb{\eta}_{\ell}$ lead to relatively large values of $\mathfrak E_{\ell}(\eta)$. 
\item In contrast, let us now assume that the digits in $\pmb{\eta}_{\ell}$ alternate, with the even entries being 1.  Then for $k \in \mathcal B_{\ell}$, 
\begin{equation}  
\bigl\{\eta 2^{-k} \bigr\} = \Bigl\{ 2^{-k} \sum_{j=0}^{k-1} \mathtt d_j(\eta) 2^j \Bigr\} = \Bigl\{ 2^{-k} \Bigl[ \sum_{j=0}^{K_{\ell-1}} \mathtt d_j(\eta) 2^j + \sum^{\dagger} 2^{j}\Bigr] \Bigr\}. \label{frac-part}   
\end{equation} 
The sum ${\overset{\dagger}{\sum}} 2^j$ ranges over all  even integers $j$ in $\mathcal B_{\ell}$ with $j < k$. If $j_0$ and $j_1$ are, respectively, the smallest and largest indices such that $2j_0 \geq K_{\ell-1}+1$ and $2(j_1-1) \leq k$, then 
\begin{equation} 2^{-k} {\overset{\dagger}{\sum}} 2^j = 2^{-k} \sum_{j=j_0}^{j_1-1} 2^{2j} = 2^{-k} \frac{4^{j_1} - 4^{j_0}}{3} = \frac{2^{2j_1-k}}{3} - \frac{2^{2j_0-k}}{3}. \end{equation}  
As a result, the sum in \eqref{frac-part} reduces to 
\[ 2^{-k} \Bigl[ \sum_{j=0}^{K_{\ell-1}} \mathtt d_j(\eta) 2^j + \sum^{\dagger} 2^{j} \Bigr] = \frac{2^{2j_1-k}}{3} + \mathfrak e_k, \text{ with } \mathfrak e_k :=  2^{-k} \sum_{j=0}^{K_{\ell-1}} \mathtt d_j(\eta) 2^j  - \frac{2^{2j_0-k}}{3}. \]
The sum in \eqref{frac-part} is therefore  of the form $\frac{2^M}{3} + \mathfrak e_k$ for some $M \in \mathbb N$. The error term $\mathfrak e_k$ obeys 
\[ \bigl| \mathfrak e_k \bigr| \leq 2^{-k} \sum_{j=0}^{K_{\ell-1}} 2^j + \frac{2^{2j_0-k}}{3} \leq 2^{K_{\ell-1}+1-k} + \frac{2^{2j_0-k}}{3} \leq    2^{-4} + \frac{1}{4 \cdot 3} < \frac{1}{6}, \] 
provided $k > K_{\ell-1} +4$. For every such $k \in \mathcal B_{\ell}$, we observe that $\{\eta 2^{-k}\} = \{2^{M}/3 + \mathfrak e_k \}$ is either of the form $\frac{1}{3} + \mathfrak e_k$ or $\frac{2}{3} + \mathfrak e_k$. Thus  
\[ |\cos(\pi \eta 2^{-k})| \leq |\cos({\pi}/{6})| = \frac{\sqrt{3}}{2} \text{ for all indices $k \in \mathcal B_{\ell}$ except possibly the first four.} \]  Inserting this into \eqref{cos-product} gives $|\mathfrak E_l(\eta)| \leq (\frac{\sqrt{3}}{2})^{K_{\ell} - K_{\ell-1}-4}$, a far smaller upper bound compared with the one obtained in the previous case.  
\end{itemize} 
The above analysis suggests that for $\mathfrak E_{\ell}$ to attain values that are significantly smaller than its maximum, consecutive digits in $\pmb{\eta}_{\ell}$ must change frequently.  The estimation procedure of $\mathtt I_2$ employed in this section will make this idea precise. 

 \subsection{The parameters $\mathtt t$ and $\mathtt T$}
Given $R \in \mathbb N$, let us first define two $R$-dependent indices $\mathtt t, \mathtt T \in \mathbb N$, which will play an important role in the estimation of $\mathtt I_2$: 
\begin{equation} \label{t&T}
\sqrt{R} \in (K_{\mathtt t-1}, K_{\mathtt t} ], \qquad R \in (K_{\mathtt T-1}, K_{\mathtt T}], 
\end{equation} 
where $\mathcal K = \{K_{\ell} : \ell \geq 1\}$ is the sequence of positive integers used to define $\mu$; see \eqref{block-def}, \eqref{epsilon-assumption} and \eqref{def-mu} in Section \ref{mu-construction-section}. Clearly $\mathtt t \leq \mathtt T$. In what follows, we will assume that $\mathcal K$ satisfies the growth conditions 
\begin{equation} \label{choice-of-K}
K_{\ell} \geq 10 K_{\ell-1} \quad \text{ and } \quad \mathtt t < \mathtt T-2 \text{ for all large $R \in \mathbb N$},  
\end{equation} 
In particular, these conditions hold for the choice of $\mathcal K$ given in \eqref{choice-of-K-epsilon}, as will be verified in Section \ref{special-choices-section}. The condition \eqref{choice-of-K} implies $K_{\ell} \geq 10^{\ell}$, which places a bound on the growth rate of $\mathtt T$: 
\begin{equation} \label{T-growth} 
10^{\mathtt T-1} \leq K_{\mathtt T-1} \leq R, \quad \text{ or } \quad \mathtt T \leq C_0 \log R.    
\end{equation}    
\vskip0.1in 
\noindent Since each factor in the infinite product \eqref{mu-hat-infinite-product} representing $\mu$ is at most one in absolute value,  we arrive at the following estimate: for $h \in \mathbb Z \setminus \{0\}$,  
\begin{align} \label{mu-hat-infinite-product-2} 
\bigl| \widehat{\mu}\bigl(\eta \bigr)\bigr| &\leq \prod_{\ell=\mathtt t+1}^{\mathtt T-1}  \Biggl| \Bigl[ \varepsilon_{\ell} + (1 - \varepsilon_{\ell}) \prod_{k = K_{\ell-1}+1}^{K_{\ell}} \mathfrak E_{\ell}(\xi) \Bigr] \Biggr|, \text{ where }  \\ 
\eta = \eta(u, v; h, r)  &:= hr^u(r^v-1) \in \mathbb Z \setminus \{0\}, \quad \xi = \xi(u, v;h, r) := \eta \text{ mod } 2^{R}. \label{def-xi}
\end{align} 
Here we have used the fact that the function $\mathfrak E_{\ell}$ is periodic with period $2^{K_{\ell}}$, and therefore also periodic with period $2^{R}$ for $R \geq K_{\ell}$. Thus we have the relation $\mathfrak E_{\ell}(\eta) = \mathfrak E_{\ell}(\xi)$ for every $\ell \in \{\mathtt t+1, \ldots, \mathtt T-1 \}$. 
 It follows from these definitions that the binary digit sequence representing $\xi$ is a truncation of the corresponding digit sequence for $\eta$. Specifically, 
\begin{align} 
&\text{if } \eta = \sum_{k \geq 0} \mathtt d_k(\eta) 2^{k} \text{ with } \mathtt d_k(\cdot) \in \{0, 1\}, \quad \text{ then } \quad  \xi = \sum_{k=0}^{2^{R} - 1} \mathtt d_k(\eta) 2^{k}; \label{eta-and-xi}  \\  
&\text{in other words, } \mathtt d_k(\xi) = \begin{cases} \mathtt d_k(\eta) &\text{ if } 0 \leq k < 2^{R}, \\ 0 &\text{ otherwise. }  \end{cases} \nonumber  
\end{align}  

\subsection{A digit-based decomposition of $\mathtt I_2$}
Let $N$ be any integer in the range \eqref{N-and-R}. The choice of $R$ generates the indices $\mathtt t$ and $\mathtt T$ as in \eqref{t&T}. Pick any index $\ell \in \{\mathtt t+1, \ldots, \mathtt T-1 \}$. For $\xi \in \{0, 1, \ldots, 2^R-1\}$ as in \eqref{def-xi} and \eqref{eta-and-xi}, let us define 
\begin{equation}  \label{def-Nl}
\begin{aligned} 
\mathfrak N_{\ell}(\xi) &:= \# \Bigl\{k \in \mathcal B_{\ell} : \bigl(\mathtt d_{k-2}(\xi), \mathtt d_{k-1}(\xi) \bigr) = (1, 0) \text{ or } (0, 1) \Bigr\} \\ 
&:= \# \Bigl\{k \in \{K_{\ell-1}, \ldots, K_{\ell}-1\} : 1 \leq \mathtt d_{k-1} + 2 \mathtt d_{k} \leq 2 \Bigr\} 
\end{aligned} 
\end{equation} 
In other words, $\mathfrak N_{\ell}(\xi)$ is the number of consecutive digit changes that occur in the $\ell^{\text{th}}$ block 
\begin{equation} \label{bar-block}
\overline{\mathcal B}_{\ell} := \{K_{\ell-1} -1 \leq k \leq K_{\ell}-1 \}
\end{equation} 
of the binary string $\{\mathtt d_k(\xi) : k \geq 0\}$ consisting of the dyadic digits of $\xi$. The size of $\mathfrak N_{\ell}(\xi)$  motivates the following decomposition of $\{1, 2, \ldots, N\}$ depending on $v$:  
\begin{align} 
\mathbb U_1 = \mathbb U_1(v) &:= \left\{u \in \{1, 2, \ldots, N\}\; \Biggl| \; \begin{aligned} &\exists \text{ some } \ell \in \{\mathtt t+1, \ldots, \mathtt T-1 \} \label{def-U1} \\  &\text{such that }  \# \bigl[ \mathfrak N_{\ell}(\xi) \bigr] < \alpha \#(\overline{\mathcal B}_{\ell}) \end{aligned} \right\}, \\
\mathbb U_2 = \mathbb U_2(v) &:= \{1, 2, \ldots, N\} \setminus \mathbb U_1(v). \label{def-U2}
\end{align} 
Here $\alpha \in (0, \frac{1}{4})$ is an absolute constant whose value will be specified shortly, in Lemma \ref{digit-lemma-1} below. 
This leads to a corresponding decomposition of the sum $\mathtt I_2$ defined in \eqref{def-Ii}: 
\begin{equation} 
\mathtt I_2 = \mathtt I_{21} + \mathtt I_{22}, \qquad \mathtt I_{2j} = \mathtt I_{2j}(h;r,N) := \sum_{v \in \mathbb V_2} \sum_{u \in \mathbb U_j(v)} \bigl| \widehat{\mu}\bigl(hr^u(r^v-1) \bigr)\bigr|. \label{def-I2j}
\end{equation} 
The main estimates for $\mathtt I_{21}$ and $\mathtt I_{22}$ are listed here.  
\begin{lemma} \label{I21-lemma}
For any odd integer $r \geq 3$ and $h \in \mathbb Z \setminus \{0\}$, there exist constants $c_0, C_0 > 0$ depending only on $r$ and $h$ with the following property. For any $N$ in the range \eqref{N-and-R} and $v \in \mathbb V_2$ as in \eqref{def-W2},  
\begin{equation} \label{U1-cardinality} 
\# \bigl[ \mathbb U_1(v) \bigr] \leq  C_0 N2^{-c_0 \sqrt{R}}. 
\end{equation}
For $\mathtt I_{21}$ as in \eqref{def-I2j}, this implies
\begin{equation} \label{I21-bound} 
\mathtt I_{21} \leq C_0 N^2 2^{-c_0 \sqrt{R}}. 
\end{equation} 
\end{lemma}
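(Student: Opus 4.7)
The plan is to bound $\#\mathbb U_1(v)$ by combining two independent ingredients: a combinatorial count of binary strings having few adjacent-digit changes in a block, and a near-injectivity estimate for the map $u \mapsto \xi(u,v) := hr^u(r^v-1) \bmod 2^R$ coming from Schmidt's lower bound on the order of $r$ modulo $2^k$ (Lemma \ref{Schmidt-lemma}). The bound \eqref{I21-bound} will follow immediately from \eqref{U1-cardinality} since $|\widehat{\mu}| \leq 1$ and $\#\mathbb V_2 \leq N$, so the work is entirely in proving \eqref{U1-cardinality}.

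For the near-injectivity, I would let $s$ denote the $2$-adic valuation of $h(r^v-1)$. The hypothesis $v \in \mathbb V_2$ forces $2^{R_0} \nmid (r^v-1)$ with $R_0 = \lfloor\sqrt R\rfloor$, so $s \leq R_0 + \log_2|h| < K_{\mathtt t}$ for $R$ sufficiently large, using \eqref{t&T}. Writing $h(r^v-1) = 2^s m$ with $m$ odd, one has $\xi \equiv 2^s m r^u \pmod{2^R}$, so the digits of $\xi$ inside any block $\overline{\mathcal B}_\ell$ with $\ell \geq \mathtt t+1$ are determined by $mr^u \bmod 2^{R-s}$. Since $\text{ord}_{2^{R-s}}(r) \geq c_0(r) 2^{R-s}$ and $N \leq 2^R$, at most a constant $C(r)$ indices $u \in \{1,\ldots,N\}$ can produce any given value of $m r^u \bmod 2^{R-s}$, and hence any given $\xi$.

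For the combinatorial count, fix $\ell \in \{\mathtt t+1, \ldots, \mathtt T-1\}$ and set $L_\ell := \#(\overline{\mathcal B}_\ell)$. The number of $\xi \in \{0, 1, \ldots, 2^R-1\}$ with $\mathfrak N_\ell(\xi) < \alpha L_\ell$ is at most (digits outside $\overline{\mathcal B}_\ell$ free, contributing $2^{R - L_\ell}$) times the number of binary strings of length $L_\ell$ with fewer than $\alpha L_\ell$ adjacent changes. The latter count is $2 \sum_{j < \alpha L_\ell} \binom{L_\ell-1}{j} \leq 2^{1+H(\alpha)L_\ell}$, with $H$ the binary entropy: one chooses the initial digit and then the locations of the changes. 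Taking $\alpha < 1/4$ small enough that $c := 1 - H(\alpha) > 0$ (which I would need to verify is consistent with the threshold $\alpha$ fixed globally through the companion Lemma \ref{digit-lemma-1}), this yields at most $2 \cdot 2^{R - cL_\ell}$ bad $\xi$ at level $\ell$, hence at most $2C(r) 2^{R - c L_\ell}$ bad indices $u$. The growth hypothesis \eqref{choice-of-K} gives $L_\ell \geq 9 K_{\ell-1} \geq 9 \cdot 10^{\ell - \mathtt t - 1}\sqrt R$ for $\ell \geq \mathtt t + 1$, so a union bound over $\ell$ produces a geometric series dominated by its first term:
\begin{equation*}
\#\mathbb U_1(v) \leq 2C(r) \sum_{\ell = \mathtt t+1}^{\mathtt T-1} 2^{R - cL_\ell} \leq C_0 \, 2^{R - 9c\sqrt R} \leq C_0 \, N \, 2^{-c_0 \sqrt R},
\end{equation*}
with $c_0 = 9c$, which is \eqref{U1-cardinality}. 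The step I expect to require the most care is matching the precise form of $\mathfrak N_\ell$ used in \eqref{def-Nl} --- which probes lagged pairs $(\mathtt d_{k-2}, \mathtt d_{k-1})$ --- with the standard entropy estimate for adjacent-digit changes, and confirming that the exponent $c > 0$ is compatible with the value of $\alpha$ already fixed outside this lemma.
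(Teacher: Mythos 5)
Your approach parallels the paper's: decompose $\mathbb U_1(v)$ according to the level $\ell \in \{\mathtt t+1,\ldots,\mathtt T-1\}$ at which the condition $\mathfrak N_\ell(\xi)<\alpha\#(\overline{\mathcal B}_\ell)$ triggers, count the ``bad'' digit patterns on $\overline{\mathcal B}_\ell$, bound the multiplicity of the map $u\mapsto\xi$, and then union over $\ell$. You substitute a direct binomial/entropy count for Schmidt's Lemma~\ref{digit-lemma-1} (which the paper cites as a black box to get the cleaner exponent $3/4$), and you reduce to Lemma~\ref{Schmidt-lemma} directly instead of invoking Lemma~\ref{digit-lemma-2}, which packages the same fiber count. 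Both substitutions are legitimate, and you correctly flag that your exponent $1-H(\alpha)$ must be reconciled with the globally fixed $\alpha$.

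However, there is a genuine error in the multiplicity step. You claim that ``at most a constant $C(r)$ indices $u$'' map to a given $\xi$. The actual fiber count is
\[
\Bigl\lceil \tfrac{N}{\operatorname{ord}_{2^{R-s}}(r)} \Bigr\rceil \;\leq\; 1 + \frac{2^R}{c_0(r)\,2^{R-s}} \;=\; 1 + \frac{2^s}{c_0(r)},
\]
and $s$ (the $2$-adic valuation of $h(r^v-1)$) is only bounded above by $R_0+\log_2|h|\approx\sqrt R$ for $v\in\mathbb V_2$; within $\mathbb V_2$ it can be close to that bound. So the multiplicity is of size $O_{h,r}(2^{\sqrt R})$, not $O_r(1)$ --- this is exactly what Lemma~\ref{digit-lemma-2} gives via $c(r)\varrho_2$ with $\varrho_2=h_2(r^v-1)_2\leq|h|2^{R_0}$, and it is why the paper's per-level exponent is $R+\sqrt R-\tfrac14(K_\ell-K_{\ell-1})$ rather than $R-\tfrac14(K_\ell-K_{\ell-1})$. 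Fortunately this error is absorbed: since $L_\ell=\#(\overline{\mathcal B}_\ell)\geq 9\sqrt R$ for $\ell\geq\mathtt t+1$, the extra $2^{\sqrt R}$ merely shifts your $c_0$ from $9c$ to $9c-1$ where $c=1-H(\alpha)$, and this stays positive because the paper's $\alpha$ is small. So the conclusion survives, but the stated bound on the fiber size is false and needs correcting before the argument is sound.
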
 
\noindent In order to estimate $\mathtt I_{22}$, we will need to make some assumptions on the parameters $\mathcal K$ and $\pmb{\varepsilon}$. Let us call $\mathcal K, \pmb{\varepsilon}$ an {\em{admissible choice of parameters with exponent $\gamma$}} if 
\begin{itemize} 
\item $K_{\ell} \nearrow \infty$ with the additional criteria \eqref{choice-of-K}, 
\item $\varepsilon_{\ell} \rightarrow 0$ subject to \eqref{epsilon-assumption}, 
\item $\mathcal K$ and $\pmb{\varepsilon}$ are connected by the following relation: 
\begin{equation} 
\prod_{\ell=\mathtt t+1}^{\mathtt T-1} \varepsilon_{\ell} < K_{\mathtt T}^{-\gamma} \quad \text{ for all sufficiently large } R \in \mathbb N. \label{tT-condition} 
\end{equation} 
\end{itemize} 
Admissibility is a relatively easy requirement to meet. We will see in Section \ref{special-choices-section} that the choice of parameters given in \eqref{choice-of-K-epsilon} is admissible for all $\gamma > 0$. In fact, so is $K_{\ell} = K^{\ell}, \varepsilon_{\ell} = 1/\ell$; the verification is left to the reader. 
\begin{lemma} \label{I22-lemma}
Let $\mathcal K, \pmb{\varepsilon}$ be an admissible choice of parameters with exponent $\gamma > 1$. Then for any odd integer $r \geq 3$ and $h \in \mathbb Z \setminus \{0\}$, there exist constants $c_0, C_0 > 0$ depending only on the aforementioned quanitities  with the following property: for $N$ in the range \eqref{N-and-R}, the sum $\mathtt I_{22}$ given by \eqref{def-I2j} obeys the bound 
\begin{equation} \label{I22-bound}
\mathtt I_{22}(h; r, N) \leq C_0 N^2 \bigl[ R^{-\gamma} + 2^{-c_0 \sqrt{R}} \bigr]. 
\end{equation} 
\end{lemma}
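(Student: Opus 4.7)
The plan is to sharpen the truncated product bound \eqref{mu-hat-infinite-product-2} under the combined constraints $v \in \mathbb V_2$ and $u \in \mathbb U_2(v)$, which impose strong digit-level structure on $\xi = \eta \bmod 2^R$ for $\eta = hr^u(r^v-1)$. Using the triangle inequality on each factor, I begin with
\[ |\widehat{\mu}(\eta)| \leq \prod_{\ell = \mathtt t+1}^{\mathtt T-1} \bigl( \varepsilon_\ell + |\mathfrak E_\ell(\xi)| \bigr). \]
Since $|\mathbb V_2| \cdot \sup_v |\mathbb U_2(v)| \leq N^2$, it is enough to show that this product is bounded by $C(R^{-\gamma} + 2^{-c_0 \sqrt{R}})$ uniformly over such pairs $(u, v)$. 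The argument has two parts: first, making each block factor $|\mathfrak E_\ell(\xi)|$ small; second, exploiting admissibility to convert $\prod \varepsilon_\ell < K_\mathtt{T}^{-\gamma}$ into the target rate without losing too much elsewhere.

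For the block-factor step, I would extend the motivating analysis preceding \eqref{def-Nl}: whenever $(\mathtt d_{k-2}(\xi), \mathtt d_{k-1}(\xi)) \in \{(0,1), (1,0)\}$, the two leading bits of the fractional part place $\{\xi 2^{-k}\}$ in $[1/4, 3/4]$, forcing $|\cos(\pi \{\xi 2^{-k}\})| \leq 1/\sqrt{2}$. Feeding this into the product representation \eqref{cos-product} yields $|\mathfrak E_\ell(\xi)| \leq 2^{-\mathfrak N_\ell(\xi)/2}$. The definition \eqref{def-U2} of $\mathbb U_2(v)$ guarantees $\mathfrak N_\ell(\xi) \geq \alpha(K_\ell - K_{\ell-1}+1)$ for every $\ell \in \{\mathtt t+1, \ldots, \mathtt T-1\}$, and the growth condition \eqref{choice-of-K} combined with \eqref{t&T} gives $K_\ell - K_{\ell-1} \geq 9 K_{\ell-1} \geq 9 K_\mathtt{t} \geq 9\sqrt{R}$ for each such $\ell$. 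Consequently $|\mathfrak E_\ell(\xi)| \leq 2^{-c_1 \sqrt{R}}$ with $c_1 = 9\alpha/2$, uniformly in $\ell$.

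For the admissibility step, I would fix $c_0 = c_1/2$ and distinguish two cases. If $\varepsilon_\ell \geq 2^{-c_0\sqrt{R}}$ for every $\ell \in \{\mathtt t+1, \ldots, \mathtt T-1\}$, the inequality $1+x \leq e^x$ gives
\[ \prod_\ell \bigl(\varepsilon_\ell + 2^{-c_1\sqrt{R}}\bigr) \leq \Bigl(\prod_\ell \varepsilon_\ell\Bigr) \exp\bigl(\mathtt T \cdot 2^{-c_0\sqrt{R}}\bigr), \]
and the exponential factor is $O(1)$ because $\mathtt T = O(\log R)$ by \eqref{T-growth}; combined with the admissibility bound \eqref{tT-condition} and $K_\mathtt{T} \geq R$, this yields $|\widehat{\mu}(\eta)| \leq C R^{-\gamma}$. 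Otherwise, some $\varepsilon_{\ell_0} < 2^{-c_0\sqrt{R}}$, so the $\ell_0$-th factor is at most $2 \cdot 2^{-c_0\sqrt{R}}$; using $|\widehat{\mu}_\ell(\eta)| \leq 1$ for all other $\ell$ yields $|\widehat{\mu}(\eta)| \leq 2 \cdot 2^{-c_0\sqrt{R}}$. Assembling both cases and multiplying by the size $N^2$ of the index set produces \eqref{I22-bound}.

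The main obstacle is the case split in the admissibility step. A naive bound $(\varepsilon_\ell + \delta_\ell) \leq 2 \max(\varepsilon_\ell, \delta_\ell)$ would introduce a factor of $2^{\mathtt T - \mathtt t} = R^{O(1)}$ that would swamp the $R^{-\gamma}$ target. The device $1+x \leq e^x$ sidesteps this by localizing the loss to $\exp(\sum \delta_\ell/\varepsilon_\ell)$, which is $O(1)$ precisely when $\varepsilon_\ell$ is uniformly bounded below by $2^{-c_0 \sqrt{R}}$; the complementary case is handled cleanly because a single anomalously small $\varepsilon_{\ell_0}$ already drives the entire product below $2^{-c_0 \sqrt{R}}$ on its own.
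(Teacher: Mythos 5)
Your proof is correct, but it takes a genuinely different route from the paper's. The paper expands the truncated product
\[
\prod_{\ell=\mathtt t+1}^{\mathtt T-1}\bigl[\varepsilon_\ell + (1-\varepsilon_\ell)|\mathfrak E_\ell(\xi)|\bigr]
\]
multi-index style into $2^{\mathtt T-\mathtt t-1}$ terms indexed by $\mathbf a\in\{0,1\}^{\mathtt T-\mathtt t-1}$, then splits $\mathtt I_{22}\leq\mathtt J_0+\mathtt J_1$, where $\mathtt J_0$ is the single $\mathbf a=0$ term (all $\varepsilon$'s, bounded directly by admissibility \eqref{tT-condition} as $\prod\varepsilon_\ell\leq K_{\mathtt T}^{-\gamma}\leq R^{-\gamma}$) and $\mathtt J_1$ collects every $\mathbf a\ne 0$, which carries at least one factor $|\mathfrak E_\ell|\leq\kappa:= 2^{-cK_{\mathtt t}}$; the sum $\sum_{\mathbf a\ne 0}\kappa^{|\mathbf a|}=(1+\kappa)^{\mathtt T-\mathtt t-1}-1\leq C(\mathtt T-\mathtt t)\kappa$ is then killed by $\kappa\leq 2^{-c\sqrt{R}}$ and $\mathtt T=O(\log R)$. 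You instead keep the product intact, factor out $\prod\varepsilon_\ell$, and case-split on whether every $\varepsilon_\ell$ is bounded below by $2^{-c_0\sqrt{R}}$. The two arguments exploit exactly the same dichotomy --- either all factors are ``$\varepsilon$-like'' and admissibility takes over, or some factor is dominated by a small $|\mathfrak E_\ell|$ --- and they use the same device ($1+x\leq e^x$ versus $(1+\kappa)^n-1\leq Cn\kappa$, which is morally identical) to absorb the $O(\log R)$-fold loss. The paper's version has the minor virtue of avoiding a case split and keeping the $\mathtt J_0$/$\mathtt J_1$ bookkeeping explicit, which makes the roles of the two terms in the final estimate $R^{-\gamma}+2^{-c_0\sqrt{R}}$ transparent; yours is slightly more compact but must introduce the threshold $2^{-c_0\sqrt{R}}$ by hand. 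One small imprecision worth flagging: early on you write $|\widehat\mu(\eta)|\leq\prod_\ell(\varepsilon_\ell+|\mathfrak E_\ell(\xi)|)$, whose individual factors can exceed $1$; this is harmless in Case~1 (the exponential trick absorbs it) but in Case~2 you correctly switch to the clean bound $|\widehat\mu_\ell(\eta)|\leq 1$ for the non-distinguished indices, which is what makes that case close. Both proofs reach \eqref{I22-bound} with the same constants up to bookkeeping.
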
 
\noindent Lemmas \ref{I21-lemma} and \ref{I22-lemma} will be proved in Sections \ref{I21-estimation-section} and \ref{I22-estimation-section} respectively. Assuming these, the proof of Proposition \ref{Ii-lemma-partb} is completed as follows. 

\subsection{Proof of Proposition \ref{Ii-lemma-partb}}
Let $\mathcal K, \pmb{\varepsilon}$ be an admissible choice of parameters with exponent $\gamma > 1$. Let $\mu = \mu[\mathcal K, \pmb{\varepsilon}]$ be the corresponding measure constructed in Section \ref{mu-construction-section}. Proposition \ref{Ii-lemma-partb} is a direct consequence of the decomposition \eqref{def-I2j}, followed by the estimates \eqref{I21-bound} and \eqref{I22-bound} derived in Lemmas \ref{I21-lemma} and \ref{I22-lemma} respectively. Combining all these pieces yields
\[ \mathtt I_2 = \mathtt I_{21} + \mathtt I_{22} \leq C_0 N^2 \bigl[2^{-c_0 \sqrt{R}} + R^{-\gamma} \bigr], \]
which is the desired conclusion.  
\qed 
\section{Estimation of $\mathtt I_{21}$} \label{I21-estimation-section}
The goal of this section is to prove Lemma \ref{I21-lemma}, in particular the bound \eqref{U1-cardinality} on $\#[\mathbb U_1(v)]$. The following subsection recalls two number-theoretic lemmas due to Schmidt \cite{s60} which play an important role in this estimate.  
\subsection{Schmidt's lemmas on digits}
In the digit expansion of most numbers, long blocks comprising a single digit are possible but infrequent; most numbers switch digits often. The first lemma makes this precise. 
\begin{lemma}[{\cite[Lemma 3]{s60}}] \label{digit-lemma-1}
There exists a constant $\alpha \in (0, \frac{1}{4})$ with the following property. For all sufficiently large $k$, the number of binary strings $(\mathtt d_0, \mathtt d_1, \ldots, \mathtt d_{k-1}) \in \{0, 1\}^k$ obeying 
\begin{align} \label{changing-digits} 
\# \Bigl\{1 \leq \ell \leq k-1 &: 1 \leq \mathtt d_{\ell-1} + 2 \mathtt d_{\ell} \leq 2 \Bigr\} \nonumber \\ = &\# \Bigl\{1 \leq \ell \leq k-1 : (\mathtt d_{\ell-1}, \mathtt d_{\ell}) = (1, 0) \text{ or } (0, 1) \Bigr\}  \leq \alpha k 
\end{align} 
is at most $2^{\frac{3k}{4}}$. In other words, there exist at most $2^{\frac{3k}{4}}$ integers 
\[ \mathtt D = \sum_{\ell=0}^{k-1} \mathtt d_{\ell} 2^{\ell} \in \{0, 1, \ldots, 2^{k}-1\} \] whose digits $(\mathtt d_0, \ldots, \mathtt d_{k-1})$ in base 2 obey \eqref{changing-digits}. 
\end{lemma}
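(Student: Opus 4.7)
The plan is a straightforward entropy-based counting argument, where the key observation is that a binary string is essentially equivalent to the choice of its initial digit together with its pattern of "switches." Concretely, I would first note that a string $(\mathtt d_0, \mathtt d_1, \ldots, \mathtt d_{k-1}) \in \{0,1\}^k$ is uniquely specified by the pair $(\mathtt d_0, S)$, where
\[
S := \bigl\{1 \leq \ell \leq k-1 : \mathtt d_{\ell-1} \neq \mathtt d_{\ell} \bigr\} \subseteq \{1, 2, \ldots, k-1\}.
\]
Given $\mathtt d_0$ and $S$, the remaining digits are reconstructed inductively by flipping whenever $\ell \in S$ and copying otherwise. Under this bijection, the condition (6.14) of the lemma translates exactly to $|S| \leq \alpha k$, since $(\mathtt d_{\ell - 1}, \mathtt d_\ell) \in \{(0,1), (1,0)\}$ if and only if $\ell \in S$.

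Consequently, the count of admissible strings is bounded by
\[
2 \sum_{j=0}^{\lfloor \alpha k \rfloor} \binom{k-1}{j}.
\]
At this point I would invoke the classical entropy estimate: for $\alpha < 1/2$,
\[
\sum_{j=0}^{\lfloor \alpha k \rfloor} \binom{k-1}{j} \leq 2^{(k-1) H(\alpha)}, \qquad H(\alpha) := -\alpha \log_2 \alpha - (1-\alpha) \log_2 (1-\alpha).
\]
Since $H$ is continuous on $[0, 1/2]$ with $H(0) = 0$, I can fix $\alpha \in (0, 1/4)$ small enough to ensure $H(\alpha) < 3/4$; a concrete and admissible choice is $\alpha = 1/5$, for which $H(1/5) = \log_2 5 - 8/5 \approx 0.722 < 3/4$. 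For such an $\alpha$, the overall bound becomes $2 \cdot 2^{(k-1) H(\alpha)} \leq 2^{3k/4}$ for all sufficiently large $k$, which is exactly the desired estimate.

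The argument is essentially routine and presents no substantial analytic obstacle; the only point that requires verification is the numerical claim that some $\alpha$ in the prescribed range $(0, 1/4)$ satisfies $H(\alpha) \leq 3/4$, which is settled by the explicit computation at $\alpha = 1/5$ above. If one wished to avoid citing the entropy estimate as a black box, an equivalent route would be to apply Stirling's formula directly to $\binom{k-1}{\lfloor \alpha k\rfloor}$ and sum a geometric series, which gives the same asymptotic exponent $H(\alpha)$ but introduces no further difficulty.
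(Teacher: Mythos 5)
The paper does not supply a proof of this lemma at all; it simply cites Schmidt \cite[Lemma 3]{s60}, and in the remark immediately following records Schmidt's explicit sufficiency criterion $2^{1/8} > (2\alpha)^{\alpha}(1-2\alpha)^{\frac12-\alpha}$. Your argument is therefore a self-contained alternative, and it is correct: encoding a string by its initial bit together with its switch set $S$ is a bijection, the constraint \eqref{changing-digits} becomes $|S|\leq\alpha k$, and the binomial-tail/entropy estimate then closes the count. This is essentially the same combinatorial mechanism that underlies Schmidt's lemma, just packaged via the standard bound $\sum_{j\le pn}\binom{n}{j}\le 2^{nH(p)}$ rather than a direct Stirling computation; either route produces the exponent $H(\alpha)k + o(k)$ and hence $2^{3k/4}$ once $H(\alpha)<3/4$.

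Two small points worth tightening if you wrote this out in full. First, your sum runs to $\lfloor\alpha k\rfloor$ while the entropy bound is applied with $n=k-1$; since $\lfloor\alpha k\rfloor$ can exceed $\lfloor\alpha(k-1)\rfloor$ by one, you should either replace $\alpha$ by a slightly larger $\alpha'\in(\alpha,\tfrac14)$ still satisfying $H(\alpha')<\tfrac34$ before invoking the bound, or note that the discrepancy contributes at most one extra binomial coefficient, which is $2^{o(k)}$. Second, the leading factor of $2$ (from the choice of $\mathtt d_0$) and the mismatch between $(k-1)H(\alpha)$ and $\tfrac34 k$ both cost only $O(1)$ bits in the exponent, and both are absorbed by the hypothesis ``for all sufficiently large $k$'' precisely because $\tfrac34 - H(\alpha)>0$ is a strictly positive margin; your explicit choice $\alpha=1/5$ with $H(1/5)=\log_2 5 - 8/5\approx 0.722$ gives such a margin, so the argument is sound. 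Neither issue is a genuine gap, just bookkeeping that should be made explicit.
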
  
\noindent In fact, an explicit value of $\alpha$ obeying Lemma \ref{digit-lemma-1} is provided in \cite{s60}. Specifically, any $\alpha$ with $2^{\frac{1}{8}} > (2 \alpha)^{\alpha} (1 - 2 \alpha)^{\frac{1}{2} - \alpha}$ will do, although this will not be needed in the sequel.  
\vskip0.1in 
\noindent A number of important observations in \cite{s60} quantify a common theme: numbers that are extremely ``non-random'' in a certain base acquire ``random-like'' qualities when viewed in a multiplicatively independent base. Suppose first that $r \sim s$; then it is easy to see that given any integer $k \geq 1$, there is a constant $C_k \geq 1$ such that all except possibly the first $C_k$ of the numbers $\{\varrho r^m : m \geq 0 \}$ are 0 mod $s^k$, for any $\varrho \in \mathbb Z \setminus \{0\}$. A remarkable conclusion of \cite{s60} is that this cannot happen if $r \not\sim s$. For instance, assume that $r$ and $s$ are relatively co-prime, as is the case for us, since $s = 2$ and $r$ is odd. In this case, the numbers $\{\varrho r^m : 0 \leq m < s^k \}$ are roughly equally distributed among the residue classes of $s^k$. In other words, not too many of these numbers can fall in the same residue class mod $s^k$. The following precise version of this statement for $s = 2$ is relevant for us.  
\begin{lemma}[{\cite[Lemma 5A, $s=2$]{s60}}]  \label{digit-lemma-2} 
Given any odd integer $r \in \mathbb N \setminus \{1\}$, there exists a constant $c(r) > 0$ as follows. 
\vskip0.1in
\noindent For any integer $\varrho \in \{\pm 2, \pm 3, \ldots \}$, let us consider the factorization 
\begin{equation} 
\varrho = \varrho_2 \varrho_2' \quad \text{ where } \varrho_2, \varrho_2' \in \mathbb N, \; \varrho_2 \in \bigl\{2^n: n = 0, 1, 2, \ldots \bigr\}, \; 2 \nmid \varrho_2'. 
\end{equation} 
Then for every $k \geq 1$ and for every $\sigma \in \{0, 1, \ldots, 2^k-1 \}$, 
\begin{equation}  \label{digit-lemma-2-ineq} 
\# \bigl\{m \in \{0, 1, \ldots, 2^k-1\} : \varrho r^m = \sigma \text{ mod } 2^k \bigr\} \leq c(r) \varrho_2.  
\end{equation}  
\end{lemma}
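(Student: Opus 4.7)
The plan is to exploit Schmidt's lower bound on the multiplicative order of an odd $r$ modulo $2^j$ (Lemma \ref{Schmidt-lemma}), after stripping the common power-of-$2$ factor from $\varrho$ and $\sigma$. First I would note that by replacing $\varrho \mapsto -\varrho$ and $\sigma \mapsto (-\sigma) \bmod 2^k$, there is no loss of generality in assuming $\varrho > 0$, and I would dispatch the trivial case $\varrho_2 \geq 2^k$ up front: the bound $2^k \leq c(r) \varrho_2$ holds automatically for any $c(r) \geq 1$, so the only genuinely nontrivial situation is $\varrho_2 = 2^a$ with $a < k$.

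In this main case, write $\varrho = 2^a \varrho_2'$ with $\varrho_2'$ odd. Since $r^m$ is odd, the congruence $2^a \varrho_2' r^m \equiv \sigma \pmod{2^k}$ forces $2^a \mid \sigma$, as otherwise the solution set is empty and the claim is trivial. Writing $\sigma = 2^a \sigma'$ and dividing by $2^a$, the congruence reduces to
\[ r^m \equiv (\varrho_2')^{-1} \sigma' \pmod{2^{k-a}}, \]
where the inverse exists because $\varrho_2'$ is odd and hence coprime to $2^{k-a}$. By Lemma \ref{hw-lemma}, the integer solution set of this reduced congruence is either empty or a single coset of $d\mathbb{Z}$, where $d := \mathrm{ord}_{2^{k-a}}(r)$. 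Consequently, the number of $m \in \{0, 1, \ldots, 2^k - 1\}$ satisfying the original congruence is at most $\lceil 2^k / d \rceil$.

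Finally, I would invoke Schmidt's Lemma \ref{Schmidt-lemma} to get $d \geq c_0(r) 2^{k-a}$, which yields
\[ \left\lceil \frac{2^k}{d} \right\rceil \;\leq\; \frac{2^a}{c_0(r)} + 1 \;\leq\; \Bigl(1 + \frac{1}{c_0(r)}\Bigr) \varrho_2, \]
delivering \eqref{digit-lemma-2-ineq} with $c(r) = 1 + 1/c_0(r)$.

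I do not foresee a serious obstacle, since the deepest input, namely the lower bound $\mathrm{ord}_{2^j}(r) \gtrsim_r 2^j$, is already packaged as Lemma \ref{Schmidt-lemma} and is precisely where the multiplicative independence of $r$ and $2$ enters. The one conceptual point that deserves emphasis is the role of the factor $\varrho_2$ on the right-hand side: dividing out the $2^a$ common to $\varrho$ and (necessarily) $\sigma$ shrinks the effective modulus from $2^k$ down to $2^{k-a}$, and the slack $\varrho_2 = 2^a$ exactly absorbs the corresponding loss in the order estimate. Everything else is routine: an invertibility argument to isolate $r^m$, the coset structure of $\{m : r^m \equiv \tau\}$ from Lemma \ref{hw-lemma}, and a counting step for how many terms of an arithmetic progression of common difference $d$ lie in a window of length $2^k$.
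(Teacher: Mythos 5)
The paper does not prove Lemma \ref{digit-lemma-2}; it cites it verbatim as Schmidt's Lemma 5A (with $s=2$) from \cite{s60}. So there is no in-paper proof against which to compare, and the right question is whether your argument is sound on its own. It is. Your reduction correctly isolates the order-theoretic content: stripping $2^a = \varrho_2$ from both sides (after observing that $2^a \nmid \sigma$ gives an empty solution set because $\varrho_2' r^m$ is odd), inverting $\varrho_2'$ modulo $2^{k-a}$, recognizing via Lemma \ref{hw-lemma} that $\{m : r^m \equiv \tau \pmod{2^{k-a}}\}$ is either empty or a single residue class modulo $d := \mathrm{ord}_{2^{k-a}}(r)$, and then counting. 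The cap $\lceil 2^k/d \rceil$ combined with Schmidt's lower bound $d \geq c_0(r)\,2^{k-a}$ from Lemma \ref{Schmidt-lemma} gives $\lceil 2^k/d \rceil \leq 2^a/c_0(r) + 1 \leq (1 + 1/c_0(r))\varrho_2$, and the trivial case $\varrho_2 \geq 2^k$ is absorbed since $c(r) \geq 1$. This is exactly the role the hypothesis that $r$ is odd (equivalently $r \not\sim 2$) plays, and it is the same Schmidt lemma the paper imports for this purpose, so your proof uses the intended machinery and lands the stated constant $c(r) = 1 + 1/c_0(r)$. One very minor point of presentation: after dividing by $2^a$ you should also note that if $\sigma' = \sigma/2^a$ is even the reduced congruence $r^m \equiv (\varrho_2')^{-1}\sigma' \pmod{2^{k-a}}$ has no solution because $r^m$ is odd; you implicitly cover this since an empty solution set trivially satisfies the bound, but stating it keeps the case analysis airtight.
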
 
\subsection{Cardinality of digit sequences with small variations}
The main ingredient in the proof of Lemma \ref{I21-lemma} is an estimate on the number of binary digit sequences without too many changes in consecutive digits. The goal of this subsection is to prove this estimate, in Lemma \ref{U1tau-cardinality} below. To set up the necessary tools, let us recall the definition of $\mathbb V_2$ from \eqref{def-W2}, the parameters $\mathtt t$ and $\mathtt T$ from \eqref{t&T}, the integer $\xi$ from \eqref{def-xi} and \eqref{eta-and-xi}, and the collection $\mathfrak N_{\ell}(\xi)$ from \eqref{def-Nl}. Let us fix indices 
\begin{align} \label{choice-of-tau}  
&v \in \mathbb V_2 \text{ and } \ell \in \{ \mathtt t + 1, \ldots, \mathtt T-1 \}, \text{ and define } \\  
\label{def-U1vtau}
&\mathbb U_1(v, \ell) := \bigl\{u \in \{1, \ldots, N\}: \#\bigl[ \mathfrak N_{\ell}(\xi) \bigr] < \alpha \#(\overline{\mathcal B}_{\ell}) \bigr\}, 
\end{align} 
with $\overline{\mathcal B}_{\ell}$ as in \eqref{bar-block}. Here and henceforth, $\alpha \in (0, \frac{1}{4})$ denotes the fixed constant whose existence has been established in Lemma \ref{digit-lemma-1}. While $\mathbb U_1(v, \ell)$ depends implicitly on $h$ and $r$ through $\xi$, these two quantities remain fixed throughout, and we omit them from the notation. 
\begin{lemma} \label{U1vtau-cardinality-lemma}
There exist constants $c, C_0 > 0$ such that for $N$ in the range \eqref{N-and-R} and all $v, \ell$ as above,  
\begin{equation} \label{U1tau-cardinality}
\# \bigl[ \mathbb U_1(v, \ell) \bigr] \leq  C_0 N 2^{-c \sqrt{R}}. 
\end{equation} 
\end{lemma}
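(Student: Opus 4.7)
The plan is to translate the combinatorial condition $\#[\mathfrak N_{\ell}(\xi)] < \alpha \#(\overline{\mathcal B}_{\ell})$ into membership of $\xi$ in a small union of residue classes modulo $2^{K_{\ell}}$, and then count the admissible $u$ via the arithmetic structure $\xi \equiv h(r^v - 1) r^u \pmod{2^{K_{\ell}}}$, invoking Schmidt's Lemma \ref{digit-lemma-2}. Note that $\ell \leq \mathtt T - 1$ together with \eqref{t&T} gives $K_{\ell} \leq K_{\mathtt T-1} < R$, from which $\xi \equiv \eta \pmod{2^{K_{\ell}}}$ and $N > 2^{R-1} \geq 2^{K_{\ell}}$; both facts are used below.

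For the digit count, observe that $\overline{\mathcal B}_{\ell} = \{K_{\ell-1}-1, \ldots, K_{\ell}-1\}$ has length $k_{\ell} := K_{\ell} - K_{\ell-1} + 1$. By Lemma \ref{digit-lemma-1}, there are at most $2^{3 k_{\ell}/4}$ binary strings of length $k_{\ell}$ with fewer than $\alpha k_{\ell}$ consecutive digit switches. Each such \emph{bad} pattern fixes the digits of $\xi$ at positions $K_{\ell-1}-1, \ldots, K_{\ell}-1$ while leaving the lower digits $\mathtt d_0, \ldots, \mathtt d_{K_{\ell-1}-2}$ free, so it lifts to exactly $2^{K_{\ell-1}-1}$ residue classes modulo $2^{K_{\ell}}$; therefore the number of bad residues $\sigma$ is at most $2^{3k_{\ell}/4 + K_{\ell-1}-1}$. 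For the arithmetic count, set $\varrho := h(r^v - 1)$ and factor $\varrho = \varrho_2 \varrho_2'$ as in Lemma \ref{digit-lemma-2}. The assumption $v \in \mathbb V_2$ means $2^{R_0} \nmid r^v - 1$, so $\varrho_2 \leq C_h \cdot 2^{R_0}$. Lemma \ref{digit-lemma-2} then gives at most $c(r) \varrho_2 \leq C_{r,h} 2^{R_0}$ solutions $u$ in any period block $\{0, \ldots, 2^{K_{\ell}}-1\}$; since $N > 2^{K_{\ell}}$, partitioning $\{1, \ldots, N\}$ into at most $2N/2^{K_{\ell}}$ such blocks yields
\[ \#\bigl\{u \in \{1, \ldots, N\} : \varrho r^u \equiv \sigma \pmod{2^{K_{\ell}}}\bigr\} \leq C_{r,h}\, N \cdot 2^{R_0 - K_{\ell}}. \]

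Multiplying the two bounds and summing over the at most $2^{3k_{\ell}/4 + K_{\ell-1}-1}$ bad residues gives
\[ \#[\mathbb U_1(v, \ell)] \leq C_{r,h}\, N \cdot 2^{R_0 - K_{\ell} + 3 k_{\ell}/4 + K_{\ell-1}-1} \leq C_{r,h}\, N \cdot 2^{R_0 - (K_{\ell} - K_{\ell-1})/4}. \]
By \eqref{choice-of-K}, $K_{\ell} - K_{\ell-1} \geq 9 K_{\ell-1}$; and since $\ell \geq \mathtt t + 1$, \eqref{t&T} gives $K_{\ell-1} \geq K_{\mathtt t} \geq \sqrt{R}$. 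Thus $(K_{\ell} - K_{\ell-1})/4 \geq 9 \sqrt{R}/4$, while $R_0 \leq \sqrt{R}$, so the exponent above is at most $-5\sqrt{R}/4$, establishing \eqref{U1tau-cardinality}. The main obstacle is the balance in the last step: the factor $\varrho_2$ appearing in Lemma \ref{digit-lemma-2} is genuinely unavoidable, and only the hypothesis $v \in \mathbb V_2$ keeps $\varrho_2 \lesssim 2^{R_0}$ small enough for the $2^{-(K_{\ell} - K_{\ell-1})/4}$ gain from the combinatorial digit count to dominate; correspondingly, the growth condition $K_{\ell} \geq 10 K_{\ell-1}$ is essential to ensure that the $\frac{1}{4}$-combinatorial saving on the block $\overline{\mathcal B}_{\ell}$ exceeds the $R_0 \leq \sqrt{R}$ loss from the arithmetic count.
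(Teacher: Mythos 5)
Your proof is correct and follows essentially the same two-step scheme as the paper's: first bound the number of ``bad'' residues using Schmidt's digit-counting Lemma \ref{digit-lemma-1}, then bound the multiplicity of each residue among $\{hr^u(r^v-1)\}$ using Schmidt's Lemma \ref{digit-lemma-2}, with the hypothesis $v \in \mathbb V_2$ controlling $\varrho_2 \lesssim 2^{R_0}$. The only difference is organizational: you reduce everything modulo $2^{K_\ell}$ and then tile $\{1,\ldots,N\}$ by period blocks of length $2^{K_\ell}$, whereas the paper works modulo $2^R$ throughout (so that $N \leq 2^R$ makes the tiling unnecessary, since all $u \leq N$ lie in a single period); the resulting exponent $R_0 - \frac{1}{4}(K_\ell - K_{\ell-1}) \leq -\frac{5}{4}\sqrt{R}$ is identical in both treatments.
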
 
\begin{proof} 
We will estimate the cardinality of $\mathbb U_1(v, \ell)$ in two steps. The first step is to identify how many distinct integers $\xi \in \{0, 1, \ldots, 2^R - 1\}$ obey the defining condition of $\mathbb U_1(v; \ell)$. The next step is to estimate, for a given $v$ and $\ell$, the number of indices $u$ that generate a single element $\xi$ with this property via the formulae \eqref{def-xi} and \eqref{eta-and-xi}. With this in mind, let us define 
\[ \mathbb W^{\ast}(\ell) := \bigl\{ \xi \in \{0, 1, \ldots, 2^{R}-1\} : \# \bigl[ \mathfrak N_{\ell}(\xi) \bigr] < \alpha \#(\overline{\mathcal B}_{\ell}) \bigr\}. \] 
Recalling from \eqref{bar-block} the role of the block $\overline{\mathcal B}_{\ell} := \{K_{\ell-1}-1, K_{\ell-1}, \ldots, K_{\ell}-1 \}$ in the definition \eqref{def-Nl} of $\mathfrak N_{\ell}(\xi)$, let us write an element $\xi \in \mathbb W^{\ast}(\ell)$ in the form
\begin{equation} \label{def-xil'} \xi = \sum_{k=0}^{R-1} \mathtt d_k(\xi) 2^k = \xi_{\ell} + \xi_{\ell}' \quad \text{ where } \quad \xi_{\ell}  = \sum_{k \in \overline{\mathcal B}_{\ell}} \mathtt d_k(\xi) 2^k \quad \text{ and } \quad \xi_{\ell}' = \xi - \xi_{\ell}. 
\end{equation} 
Then Lemma \ref{digit-lemma-1} dictates the maximum number of distinct choices of ordered strings $(\mathtt d_k(\xi) : k \in \overline{\mathcal B}_{\ell} )$, which is the same as the maximum number of distinct choices of $\xi_{\ell}$. Specifically, 
\begin{align*} 
\#\{\xi_{\ell} : \xi \in \mathbb W^{\ast}(\ell) \} &= \#\Bigl\{(\mathtt d_k(\xi): k \in \overline{\mathcal B}_{\ell}) :  \# \bigl[ \mathfrak N_{\ell}(\xi) \bigr] < \alpha \#(\overline{\mathcal B}_{\ell})   \Bigr\} \\ 
&\leq 2^{\frac{3}{4} \#(\overline{\mathcal B}_{\ell})} = 2^{\frac{3}{4}(K_{\ell} - K_{\ell-1}+1)}. \end{align*}  
On the other hand, it follows from the definition \eqref{def-xil'} that the digits of $\xi_{\ell}'$ are all zero on the block $\overline{\mathcal{B}}_{\ell}$, whereas each of its other $R - \#(\overline{\mathcal B}_{\ell})$ digits trivially admits at most two choices 0 and 1. The total number of possible choices of $\xi_{\ell}'$ is therefore $2^{R - \#(\overline{\mathcal B}_{\ell})}$. These observations provide the following bound on the cardinality of $\mathbb W^{\ast}$:  
\begin{equation}  
\# \bigl[\mathbb W^{\ast}(\ell) \bigr] \leq 2^{\frac{3}{4} (K_{\ell} - K_{\ell-1}+1)} 2^{R - (K_\ell - K_{\ell-1} + 1)} < 2^{R - \frac{1}{4}(K_{\ell} - K_{\ell-1})}.   \label{Wstar-cardinality} 
\end{equation}   
The set $\mathbb W^{\ast}(\ell)$ leads to a natural decomposition of $\mathbb U_1$:  
\begin{align} 
&\mathbb U_1(v, \ell) = \bigsqcup \bigl\{ \mathbb U_{1 \xi}(v; \ell) : \xi \in \mathbb W^{\ast}(\ell) \bigr\}, \quad \text{ where } \label{U1-decomp} \\ &\mathbb U_{1 \xi}(v, \ell) := \bigl\{u \in \mathbb U_1(v;\ell): \eta = hr^{u}(r^v-1) \equiv \xi \text{ mod } 2^R \bigr\}. \nonumber
\end{align} 
We will use Lemma \ref{digit-lemma-2} to bound the cardinality of $\mathbb U_{1\xi}$. In the notation of Lemma \ref{digit-lemma-2}, set
\[ \varrho = h(r^v-1), \quad \text{ so that } \quad \varrho_2 = h_2 (r^v-1)_2.  \] 
The defining condition \eqref{def-W2} of $\mathbb V_2$ ensures that $2^{R_0}$ does not divide $(r^v-1)$, i.e., $(r^v-1)_2 < 2^{R_0}$.  Substituting this into \eqref{digit-lemma-2-ineq} leads to 
\begin{align} 
\# \bigl[ \mathbb U_{1\xi}(v, \ell)\bigr] &\leq \# \bigl\{u \in \{1, 2, \ldots, 2^{R}\}: \varrho r^u = \xi \text{ mod } 2^R \bigr\}  \nonumber \\
&\leq c(r) \varrho_2 = c(r) h_2 (r^v-1)_2 \leq c(r) h 2^{R_0} \leq C_0 2^{\sqrt{R}}. \label{U1xi-cardinality} 
\end{align} 
Combining \eqref{U1-decomp}, \eqref{Wstar-cardinality} and \eqref{U1xi-cardinality}, we arrive at 
\begin{align*} 
\#(\mathbb U_1(v, \ell)) &\leq \#\bigl[\mathbb W^{\ast} (\ell) \bigr] \max_{\xi} \bigl[ \#(\mathbb U_{1\xi}) \bigr] \leq C_0 2^{R + \sqrt{R} - \frac{1}{4}(K_{\ell} - K_{\ell-1})} \\
&\leq C_0 N 2^{- c \sqrt{R}}.   
\end{align*}   
The last step follows from the growth condition \eqref{choice-of-K}, the choice \eqref{choice-of-tau} of $\ell > \mathtt t$ and the definition \eqref{t&T} of $\mathtt t$, via the inequality 
\[ \sqrt{R} - \frac{1}{4}(K_{\ell} - K_{\ell-1}) \leq  \sqrt{R} - \frac{9}{40}K_{\ell} \leq - c \sqrt{R}. \]  
Here $c = 5/4$. This establishes \eqref{U1tau-cardinality}, 
\end{proof} 
\subsection{Proof of Lemma \ref{I21-lemma}} 
\begin{proof} 
The set of integers $\mathbb U_1(v)$ defined in \eqref{def-U1} admits a natural decomposition: 
\[ \mathbb U_1(v) = \bigcup_{\ell = \mathtt t+1}^{\mathtt T-1} \mathbb U_1(v, \ell),  \]
with $\mathbb U_1(v, \ell)$ as in \eqref{def-U1vtau}.  Lemma \ref{U1vtau-cardinality-lemma} yields constants $c, C_0 > 0$ such that  
\begin{align*}  \# \bigl[ \mathbb U_1(v) \bigr] &\leq \sum_{\ell=\mathtt t+1}^{\mathtt T-1}  \# \bigl[ \mathbb U_1(v, \ell) \bigr] \leq  \sum_{\ell=\mathtt t+1}^{\mathtt T-1} C_0 N 2^{-c\sqrt{R}} \\
&\leq C_0 \mathtt T N 2^{-c \sqrt{R}} \leq C_0 N (\log R) \, 2^{-c \sqrt{R}} \leq C_0 N 2^{-c_0\sqrt{R}}, 
\end{align*}  
with $c_0 = c/2$. The bound on $\mathtt T$ in the penultimate step above follows from the estimate \eqref{T-growth}. This establishes \eqref{U1-cardinality}.  
 \vskip0.1in
\noindent The inequality \eqref{I21-bound} is an easy consequence of \eqref{U1-cardinality}. Replacing each summand of $\mathtt I_{21}$ in \eqref{def-I2j} by the trivial bound 1, and using \eqref{U1-cardinality}, we obtain 
\[ \mathtt I_{21} \leq \#(\mathbb V_2) \max_{v} \bigl[ \#(\mathbb U_1(v)) \bigr] \leq C_0 N^2 2^{-c_0 \sqrt{R}}.\]
This completes the proof of Lemma \ref{I21-lemma}.  
\end{proof}

\section{Estimation of $\mathtt I_{22}$} \label{I22-estimation-section} 
We prove Lemma \ref{I22-lemma} in this section. The quantity to be analyzed is $\mathtt I_{22}$ given by \eqref{def-I2j}, with $\mathbb V_2$ and $\mathbb U_2(v)$ as in \eqref{def-W2} and \eqref{def-U2} respectively, and $N$ in the range \eqref{N-and-R}.   
\subsection{The Fourier coefficient of $\mu$: from products to sums} 
The size of $\widehat{\mu}(\xi)$ when $v \in \mathbb V_2, u\in \mathbb U_2(v)$is critical for the estimation of $\mathtt I_{22}$. In order to make this precise, let us recall the pointwise bound on $\widehat{\mu}$ given by \eqref{mu-hat-infinite-product-2}, with $\mathtt t$ and $\mathtt T$ being the two $R$-dependent indices defined in \eqref{t&T}. Expanding the product leads to the estimate:
\begin{align} 
\bigl| \widehat{\mu}\bigl(hr^u(r^v-1) \bigr)\bigr| &\leq \prod_{\ell=\mathtt t+1}^{\mathtt T-1} \bigl[ \varepsilon_{\ell} + (1 - \varepsilon_{\ell}) |\mathfrak E_{\ell}(\xi)| \bigr] \nonumber \\
&\leq  \sum_{\mathbf a} \prod_{\ell=\mathtt t+1}^{\mathtt T-1} \varepsilon_{\ell}^{1 - \mathtt a_{\ell}} \bigl[(1 - \varepsilon_{\ell}) |\mathfrak E_{\ell}(\xi)|\bigr]^{\mathtt a_{\ell}}. \label{sum} 
\end{align}  
The summation $\sum_{\mathbf a}$ in \eqref{sum} ranges over all multi-indices $\mathbf a = (\mathtt a_{\mathtt t+1}, \ldots, \mathtt a_{\mathtt T-1}) \in \{0,1\}^{\mathtt T-\mathtt t-1}$.  Inserting the expression in \eqref{sum} into \eqref{def-I2j} results in a further decomposition of $\mathtt I_{22}$: 
\begin{align}
&\mathtt I_{22} (h, r; N) \leq \sum_{\mathbf a }\mathtt I^{\ast}_{\mathbf a} \leq \mathtt J_0 + \mathtt J_1, \text{ where } \label{I22-decomp} \\ 
&\mathtt I^{\ast}_{\mathbf a} = \mathtt I^{\ast}_{\mathbf a}(h, r; N) := \sum_{u,v}^{\ast}
\prod_{\ell=\mathtt t+1}^{\mathtt T-1} \varepsilon_{\ell}^{1 - \mathtt a_{\ell}} \bigl[(1 - \varepsilon_{\ell}) |\mathfrak E_{\ell}(\xi)|\bigr]^{\mathtt a_{\ell}}, \nonumber \\
&\mathtt J_0 = \mathtt J_0(h, r; N) := \mathtt I_{\mathbf 0}^{\ast} = 
\sum_{u,v}^{\ast} \prod_{\ell=\mathtt t+1}^{\mathtt T-1} \varepsilon_{\ell}, \label{def-J0} \\ &\mathtt J_1 = \mathtt J_1(h, r;N) := 
\sum_{u,v}^{\ast}
\sum_{\mathbf a \ne 0} \prod_{\ell=\mathtt t+1}^{\mathtt T-1} |\mathfrak E_{\ell}(\xi)|^{\mathtt a_{\ell}}. \label{def-J1} 
\end{align} 
The summation ${\overset{\ast}\sum}$ occurring in the definitions of $\mathtt I^{\ast}_{\mathbf a}$, $\mathtt J_0$, $\mathtt J_1$ ranges over the collection of indices 
\begin{equation} \label{def-Gamma}
\Gamma := \{(u,v) : v \in \mathbb V_2, u \in \mathbb U_2(v)\}.
\end{equation}  
with $\mathbb V_2$ and $\mathbb U_2(v)$ as in \eqref{def-W2} and \eqref{def-U2} respectively.  
\subsection{A bound on $\mathtt J_0$} 
\begin{lemma} \label{J0-lemma}
Suppose that $\mathcal K$ and $\pmb{\varepsilon}$ obey the condition \eqref{tT-condition}. For $\mu = \mu[\mathcal K, \pmb{\varepsilon}]$  defined by \eqref{def-mu}, let $\mathtt J_0$ be the sum given by \eqref{def-J0}. Then, for any $r \in \mathbb N \setminus \{1\}$ and $h \in \mathbb Z \setminus \{0\}$, there exists a constant $C_0 = C_0(h, r) > 0$ such that for 
 $N$ in the range \eqref{N-and-R}, 
\begin{equation} \label{J0-bound}
\mathtt J_0(h, r; N) \leq N^2 R^{-\gamma} \text{ for all } R \in \mathbb N. 
\end{equation} 
\end{lemma}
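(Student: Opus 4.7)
The proof of this lemma should be essentially immediate, since the summand of $\mathtt J_0$ in \eqref{def-J0} depends only on $\ell$, not on $u$ or $v$. My plan is to first pull the product $\prod_{\ell=\mathtt t+1}^{\mathtt T-1} \varepsilon_{\ell}$ outside the sum over $(u,v) \in \Gamma$, reducing the problem to estimating $\#\Gamma$ times a product of $\varepsilon_{\ell}$'s.

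For the cardinality of $\Gamma$, I would simply use the trivial bound $\#\Gamma \leq N^2$, which follows from $\Gamma \subseteq \mathbb V_2 \times \bigcup_{v} \mathbb U_2(v) \subseteq \{1,\ldots,N\}^2$. This gives
\begin{equation*}
\mathtt J_0(h, r; N) \leq N^2 \prod_{\ell=\mathtt t+1}^{\mathtt T-1} \varepsilon_{\ell}.
\end{equation*}

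Next, I invoke the admissibility hypothesis \eqref{tT-condition}, which directly states that $\prod_{\ell=\mathtt t+1}^{\mathtt T-1} \varepsilon_{\ell} < K_{\mathtt T}^{-\gamma}$ for all sufficiently large $R$. Combining this with the definition \eqref{t&T} of $\mathtt T$, which gives $R \in (K_{\mathtt T-1}, K_{\mathtt T}]$ and in particular $K_{\mathtt T} \geq R$, we obtain (since $\gamma > 0$)
\begin{equation*}
\prod_{\ell=\mathtt t+1}^{\mathtt T-1} \varepsilon_{\ell} < K_{\mathtt T}^{-\gamma} \leq R^{-\gamma}.
\end{equation*}
Chaining the two inequalities yields \eqref{J0-bound}.

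There is no real obstacle here; the entire content lies in correctly unpacking the definitions and applying the admissibility condition together with the trivial inequality $K_{\mathtt T} \geq R$. The constant $C_0(h,r)$ in the statement can be taken to absorb the finitely many small values of $R$ for which \eqref{tT-condition} has not yet kicked in (if one wishes to state the bound uniformly in $R$ rather than ``for all sufficiently large'' $R$), but this is a cosmetic adjustment rather than a substantive step.
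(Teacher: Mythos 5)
Your proof is correct and follows essentially the same route as the paper's: pull the $(u,v)$-independent product of $\varepsilon_\ell$'s out of the sum, bound $\#\Gamma \leq N^2$ trivially, apply the admissibility hypothesis \eqref{tT-condition}, and finish with $K_{\mathtt T} \geq R$ from \eqref{t&T}. Your closing observation about the constant $C_0$ absorbing the finitely many small $R$ (since \eqref{tT-condition} is only asserted for sufficiently large $R$, yet \eqref{J0-bound} as written carries no constant) is a reasonable reading of a minor imprecision in the statement.
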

\noindent {\em{Remark: }} Lemma \ref{J0-lemma} does not require $r$ to be odd; any integer $r \geq 2$ suffices. 
\begin{proof} 
As we see from the definitions \eqref{N-and-R} and \eqref{t&T}, the indices $\mathtt t$ and $\mathtt T$ depend only on $N$ and $\mathcal K$. Since $\{\varepsilon_{\ell} : \mathtt t < \ell \leq \mathtt T-1 \}$ is independent of $u$ and $v$, we observe that $\mathtt J_0$ is a sum of constant terms. The assumption \eqref{tT-condition} then implies that for all sufficiently large $N$, 
\[\mathtt J_0 \leq \sum_{u=1}^{N} \sum_{v \in \mathbb V_2} K_{\mathtt T}^{-\gamma} \leq N \# \bigl(\mathbb V_2 \bigr) R^{-\gamma} \leq N^2 R^{-\gamma}, \] 
where the penultimate step follows from the choice of $\mathtt T$ in \eqref{t&T}. This is the conclusion \eqref{J0-bound}. 
\end{proof}

\subsection{A bound on $\mathtt J_1$} 
Let us observe from \eqref{def-J1} that each summand of $\mathtt J_1$ involves a product of factors $\mathfrak E_{\ell}(\xi)$ defined in \eqref{def-El}. Each factor $\mathfrak E_{\ell}$, in turn, is a product of simple trigonometric functions. In order to estimate $\mathtt J_1$, let us start with an estimate on these fundamental building blocks of  $\mathfrak E_{\ell}$. 
\begin{lemma} \label{cos-lemma}
Let us fix any $\varrho \in \mathbb N$ whose binary digits are $\{\mathtt d_{\ell}(\varrho): \ell \geq 0\}$: 
\begin{equation} \label{rho-expansion}   
\varrho = \sum_{\ell \geq 0} \mathtt d_\ell(\varrho) 2^{\ell}, \quad \mathtt d_{\ell}(\cdot) \in \{0,1\}. 
\end{equation} 
Assume there exists an index $k \in \mathbb N$ such that $ \bigl(\mathtt d_{k-1}(\varrho), \mathtt d_k(\varrho) \bigr) \ne (0,0)$ or $(1,1)$. Then  
\begin{equation} \label{cos-small}
\bigl| \cos \bigl(\pi \varrho 2^{-k-1}\bigr) \bigr| = \Bigl| \frac{1}{2} \Bigl(1 + e(\varrho 2^{-k-1})\Bigr) \Bigr| \leq \cos \Bigl(\frac{\pi}{4}\Bigr) = \frac{\sqrt{2}}{2}<1.   
\end{equation} 
\end{lemma}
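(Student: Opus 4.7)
The plan is to reduce \eqref{cos-small} to the location of the fractional part $\{\varrho 2^{-k-1}\}$ inside a quarter-interval of $[0,1)$, which is pinned down by the two highest-order binary digits of $\varrho$ at positions $k-1$ and $k$. Using the $1$-periodicity of $|\cos(\pi\cdot)|$ and the expansion \eqref{rho-expansion}, I would first write
\[
\varrho 2^{-k-1} = \sum_{\ell \geq k+1} \mathtt d_\ell(\varrho)\, 2^{\ell-k-1} \; + \; 2^{-k-1} \sum_{\ell=0}^{k} \mathtt d_\ell(\varrho)\, 2^{\ell},
\]
and observe that the first sum is an integer. Hence $\{\varrho 2^{-k-1}\} = 2^{-k-1} \sum_{\ell=0}^{k} \mathtt d_\ell(\varrho) 2^{\ell}$, and it remains only to estimate $|\cos(\pi \{\varrho 2^{-k-1}\})|$.

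Next I would isolate the two leading contributions $\mathtt d_k(\varrho)/2$ and $\mathtt d_{k-1}(\varrho)/4$ from the tail
\[
\mathfrak r := 2^{-k-1}\sum_{\ell=0}^{k-2} \mathtt d_\ell(\varrho)\, 2^\ell, \qquad 0 \leq \mathfrak r \leq 2^{-k-1}(2^{k-1}-1) = \tfrac14 - 2^{-k-1} < \tfrac14.
\]
Writing $a := 2\mathtt d_k(\varrho) + \mathtt d_{k-1}(\varrho) \in \{0,1,2,3\}$, this forces $\{\varrho 2^{-k-1}\} \in [a/4,\, (a+1)/4)$. Under the hypothesis that $(\mathtt d_{k-1}(\varrho), \mathtt d_k(\varrho))$ is neither $(0,0)$ nor $(1,1)$, one has $a \in \{1,2\}$, and therefore $\{\varrho 2^{-k-1}\} \in [\tfrac14, \tfrac34)$.

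Finally, since $|\cos(\pi y)|$ on the interval $[\tfrac14, \tfrac34]$ attains its maximum at the two endpoints with value $\cos(\pi/4) = \sqrt 2/2$, the bound \eqref{cos-small} follows. The equivalence of $|\cos(\pi\varrho 2^{-k-1})|$ with $\bigl|\tfrac12(1 + e(\varrho 2^{-k-1}))\bigr|$ is the standard identity $\tfrac12(1 + e^{2i\theta}) = e^{i\theta}\cos\theta$ applied at $\theta = \pi\varrho 2^{-k-1}$. This is essentially a bookkeeping argument, reading off the quarter-interval containing $\{\varrho 2^{-k-1}\}$ from its two top binary digits, so I do not anticipate any genuine obstacle.
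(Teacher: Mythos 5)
Your proposal is correct and follows essentially the same route as the paper: both compute the fractional part $\{\varrho 2^{-k-1}\}$ by isolating the contributions of the digits at positions $k$ and $k-1$, bound the remaining tail by $\tfrac14 - 2^{-k-1} < \tfrac14$, conclude that $\{\varrho 2^{-k-1}\} \in [\tfrac14, \tfrac34]$, and invoke the monotonicity of $|\cos(\pi\cdot)|$ on that interval. Your parametrization $a = 2\mathtt d_k + \mathtt d_{k-1} \in \{1,2\}$ merely packages the paper's two-case analysis a little more compactly.
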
 
\begin{proof} 
Since $|\cos(\cdot)|$ is $\pi$-periodic, proving \eqref{cos-small} requires a bound on the fractional part of $\varrho 2^{-k-1}$. It follows from the digit expansion \eqref{rho-expansion} of $\varrho$ that  
\begin{align*} 
\left\{ \frac{\varrho}{2^{k+1}} \right\}  &= \sum_{\ell=0}^k \mathtt d_{\ell}(\varrho) 2^{\ell - (k+1)}  \\ 
&= \sum_{\ell=0}^{k-2} \mathtt d_{\ell}(\varrho) 2^{\ell - (k+1)} + \mathtt d_{k-1}(\varrho) 2^{-2} + \mathtt d_k(\varrho) 2^{-1} \\ 
&\leq 2^{-(k+1)} \sum_{\ell=0}^{k-2} 2^{\ell} + \begin{cases} \frac{1}{2} &\text{ if } (\mathtt d_{k-1}, \mathtt d_k) = (0,1), \\ 
\frac{1}{4} &\text{ if }  (\mathtt d_{k-1}, \mathtt d_k) = (1,0),
 \end{cases}  \\
 &\leq \begin{cases}
 \frac{3}{4} - 2^{-k-1} &\text{ if }  (\mathtt d_{k-1}, \mathtt d_k) = (0,1), \\ 
 \frac{1}{2} - 2^{-k-1} &\text{ if }  (\mathtt d_{k-1}, \mathtt d_k) = (1,0).
 \end{cases} 
\end{align*} 
Combining the two estimates above, we find that 
\[ \frac{1}{4} \leq \left\{ \frac{\varrho}{2^{k+1}} \right\} \leq \frac{3}{4}, \quad \text{ which implies } \quad \bigl|\cos(\pi \varrho 2^{-k-1})\bigr| \leq \cos(\pi/4) = \frac{\sqrt{2}}{2} < 1. \] 
This is the bound claimed in \eqref{cos-small}.
\end{proof} 
\noindent With Lemma \ref{cos-lemma} behind us, we are now ready to estimate $\mathtt J_1$. 
\begin{lemma} \label{J1-lemma}
Let $N$, $\Gamma$ and $\mathtt J_1$ be as in \eqref{N-and-R}, \eqref{def-Gamma} and \eqref{def-J1} respectively.  If $(u, v) \in \Gamma$ and $\xi$ is as in \eqref{def-xi}, then 
\begin{equation} \label{El-bound} \bigl| \mathfrak E_{\ell}(\xi) \bigr| \leq \left( \frac{\sqrt 2}{2}\right)^{\alpha(K_{\ell} - K_{\ell-1})} \text{ for all } \ell \in \{\mathtt t+1, \ldots, \mathtt T-1\}, \end{equation} 
where $\mathfrak E_{\ell}(\cdot)$ has been defined in \eqref{def-El}. As a result, 
\begin{equation}  \label{J1-bound}
\mathtt J_1(h, r; N)  \leq C_0 N^2 2^{-c_0 \sqrt{R}}. 
\end{equation}  
 \end{lemma}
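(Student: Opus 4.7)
The plan is to establish the two assertions in sequence, leaning on Lemma \ref{cos-lemma}, the defining property of $\mathbb U_2(v)$, and the growth condition \eqref{choice-of-K}; no deeper tools are needed.

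For the pointwise estimate \eqref{El-bound}, I would begin with the observation that membership $(u,v) \in \Gamma$ forces $u \in \mathbb U_2(v)$, so the contrapositive of \eqref{def-U1}--\eqref{def-U2} gives
$$\mathfrak N_\ell(\xi) \geq \alpha \#(\overline{\mathcal B}_\ell) \geq \alpha(K_\ell - K_{\ell-1}) \quad \text{for every } \ell \in \{\mathtt t+1,\ldots,\mathtt T-1\}.$$
Next, converting the complex exponentials in \eqref{def-El} into cosines yields $\mathfrak E_\ell(\xi) = \prod_{k \in \mathcal B_\ell} \cos(\pi \xi 2^{-k})$. For each $k \in \mathcal B_\ell$ with $(\mathtt d_{k-2}(\xi),\mathtt d_{k-1}(\xi)) \in \{(0,1),(1,0)\}$, Lemma \ref{cos-lemma} applied with $\varrho = \xi$ at index $k-1$ (so $2^{-((k-1)+1)} = 2^{-k}$) gives $|\cos(\pi \xi 2^{-k})| \leq \sqrt 2/2$, while all other factors are bounded trivially by $1$. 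Since the number of favourable indices is exactly $\mathfrak N_\ell(\xi)$, the bound \eqref{El-bound} follows at once.

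For the global estimate \eqref{J1-bound}, set $q := (\sqrt 2/2)^\alpha \in (0,1)$, so that \eqref{El-bound} reads $|\mathfrak E_\ell(\xi)| \leq q^{K_\ell - K_{\ell-1}}$ for $(u,v) \in \Gamma$. Recognising the inner sum in \eqref{def-J1} as a product expansion, one has
$$\sum_{\mathbf a \neq 0} \prod_{\ell=\mathtt t+1}^{\mathtt T-1} |\mathfrak E_\ell(\xi)|^{\mathtt a_\ell} = \prod_{\ell=\mathtt t+1}^{\mathtt T-1}\bigl(1 + |\mathfrak E_\ell(\xi)|\bigr) - 1 \leq \exp\Bigl(\sum_{\ell=\mathtt t+1}^{\mathtt T-1} q^{K_\ell - K_{\ell-1}}\Bigr) - 1.$$
The growth hypothesis \eqref{choice-of-K} gives $K_\ell - K_{\ell-1} \geq 9 K_{\ell-1} \geq 9 \cdot 10^{\ell - \mathtt t - 1} K_\mathtt t$, and \eqref{t&T} yields $K_\mathtt t \geq \sqrt{R}$. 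Hence the exponent is dominated by its first term $q^{K_{\mathtt t+1} - K_\mathtt t} \leq q^{9\sqrt{R}}$, and for $R$ sufficiently large the whole series is bounded by $2 q^{9\sqrt{R}}$; combined with $e^x - 1 \leq 2x$ for small $x$, this produces a per-pair bound of $C_0 \, 2^{-c_0 \sqrt{R}}$ with $c_0 = 9\alpha/2 > 0$. Summing against the trivial count $\#\Gamma \leq N \cdot \#\mathbb V_2 \leq N^2$ yields \eqref{J1-bound}.

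The only step requiring care is the index bookkeeping: matching the digit-change count in $\mathfrak N_\ell$ (which inspects pairs $(\mathtt d_{k-2},\mathtt d_{k-1})$) with the arguments $\pi \xi 2^{-k}$ appearing in $\mathfrak E_\ell$, and verifying that the super-geometric series $\sum_\ell q^{K_\ell - K_{\ell-1}}$ is dominated by its leading term under \eqref{choice-of-K}. Neither is deep; both reflect the calibration of $\mathtt t$, $\mathtt T$ and $\mathbb U_2$ to guarantee that precisely the needed number of suppression factors arises in every factor $\mathfrak E_\ell$ with $\ell$ in the relevant window.
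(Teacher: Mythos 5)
Your argument is correct and follows the same line as the paper: membership of $u$ in $\mathbb U_2(v)$ supplies at least $\alpha(K_\ell - K_{\ell-1})$ digit-changes, each converted via Lemma \ref{cos-lemma} (with the same $k \mapsto k-1$ shift) into a factor of $\sqrt{2}/2$ in $\mathfrak E_\ell(\xi)$, and the global bound then follows from the product expansion of $\sum_{\mathbf a\ne 0}$, the growth condition \eqref{choice-of-K}, and $K_{\mathtt t}\geq \sqrt R$. The only, purely cosmetic, departure from the paper is in closing the $\mathbf a$-sum: you keep the $\ell$-dependent bound $q^{K_\ell-K_{\ell-1}}$ and exploit its super-geometric decay to dominate $\sum_\ell q^{K_\ell-K_{\ell-1}}$ by its leading term, whereas the paper replaces every $|\mathfrak E_\ell(\xi)|$ by the uniform bound $\kappa = 2^{-cK_{\mathtt t}}$ and then pays a $(\mathtt T-\mathtt t-1)\lesssim \log R$ factor from the binomial expansion, absorbing it by halving the exponent. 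Both routes deliver the claimed $C_0 N^2 2^{-c_0\sqrt R}$.
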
 
\begin{proof}
Since $(u,v) \in \Gamma$, the defining condition \eqref{def-U2} for $\mathbb U_2(v)$ implies that  
\[ \# \bigl[ \mathfrak N_{\ell}(\xi)\bigr] \geq \alpha \#(\overline{\mathcal B}_{\ell}) \text{ for all } \ell \in \{\mathtt t+1, \ldots, \mathtt T-1\}.  \] 
In other words, the number of indices $k \in \mathcal B_{\ell}$ for which $(\mathtt d_{k-2}(\xi), \mathtt d_{k-1}(\xi)) = (1,0)$ or $(0,1)$ is at least $\alpha(K_{\ell} - K_{\ell-1})$. In view of \eqref{cos-small},
\begin{align} 
&\Bigl| \frac{1}{2} \Bigl(1 + e(h2^{-k})\Bigr) \Bigr|  \leq \frac{\sqrt{2}}{2}  \quad \text{ for all such $k$, and therefore } \nonumber \\   
&\bigl|\mathfrak E_{\ell}(\xi) \bigr| = \prod_{k \in \mathcal B_{\ell}} \Bigl| \frac{1}{2} \bigl(1 + e(2^{-k} \xi )\bigr) \Bigr| \leq \left( \frac{\sqrt{2}}{2}\right)^{\alpha(K_{\ell} - K_{\ell-1})}. \label{El-small}
\end{align} 
This proves \eqref{El-bound}.
 \vskip0.1in
\noindent The growth condition \eqref{choice-of-K} on $K_{\ell}$ places a lower bound on the exponent in \eqref{El-small}: 
\[ \alpha(K_{\ell} - K_{\ell-1}) \geq \frac{9\alpha K_{\ell}}{10} \geq 9\alpha K_{\mathtt t} \text{ for all } \ell \in \{\mathtt t+1, \ldots, \mathtt T-1\},  \] 
from which we obtain the estimate
\begin{equation} \label{El-small-1}
\bigl|\mathfrak E_{\ell}(\xi) \bigr| \leq \kappa, \quad \text{ with } \quad \kappa := \left(\frac{\sqrt{2}}{2} \right)^{9 \alpha K_{\mathtt t}} = 2^{-c K_{\mathtt t}}; \text{ for some }  c > 0.  
\end{equation}
We can choose $c = 9 \alpha/2$. 
\vskip0.1in 
\noindent Using \eqref{El-small-1}, \eqref{t&T} and \eqref{T-growth}, we estimate the following sum: 
\begin{align*}
\sum_{\mathbf a \ne 0} \prod_{\ell = \mathtt t +1}^{\mathtt T-1} \bigl|\mathfrak E_{\ell}(\xi) \bigr|^{a_{\ell}} \leq \sum_{\mathbf a \ne 0} \kappa^{|\mathbf a|} &= \sum_{m=1}^{\mathtt T-\mathtt t-1} {{\mathtt T-\mathtt t-1}\choose{m}} \kappa^m = (1 + \kappa)^{\mathtt T- \mathtt t-1} -1 \\
&\leq C_0 (\mathtt T- \mathtt t-1) \kappa \leq C_0 \log(R) 2^{- c \sqrt{R}} \leq C_0 2^{- c_0 \sqrt{R}}.
\end{align*}
Substituting this into \eqref{def-J1} yields 
\[ \mathtt J_1(h, r; N) \leq \sum^{\ast} \sum_{\mathbf a \ne 0} \prod_{\ell = \mathtt t +1}^{\mathtt T-1} \bigl|\mathfrak E_{\ell}(\xi) \bigr|^{a_{\ell}} \leq C_0 2^{-c_0 \sqrt{R}} \#(\Gamma) \leq C_0 N^2 2^{- c_0 \sqrt{R}},\]
as claimed in \eqref{J1-bound}. This completes the proof of Lemma \ref{J1-lemma}.    
\end{proof} 

\subsection{Proof of Lemma \ref{I22-lemma}} 
The main ingredients of the proof have been collected in the previous subsections. The admissibility assumption on $\mathcal K, \pmb{\varepsilon}$ made in Lemma \ref{I22-lemma} ensures that \eqref{tT-condition} holds for some $\gamma > 1$. This permits the application of Lemma \ref{J0-lemma}.  Combining the decomposition \eqref{I22-decomp} with the estimates \eqref{J0-bound} and \eqref{J1-bound} from Lemmas \ref{J0-lemma} and \ref{J1-lemma} respectively, we arrive at 
\[ \mathtt I_{22} \leq \mathtt J_0 + \mathtt J_1 \leq C_0 2^{2R} \bigl[ R^{-\gamma} + 2^{-c_0 \sqrt{R}}\bigr]. \] 
This is the estimate claimed in \eqref{I22-bound}. 
\qed
\section{A special choice of parameters $\mathcal K$ and $\pmb{\varepsilon}$} \label{special-choices-section} 
It remains to establish Lemma \ref{special-choices-lemma}; namely, if $\mathcal K$ and $\pmb{\varepsilon}$ are chosen to satisfy \eqref{choice-of-K-epsilon}, then the conclusions of Proposition \ref{Ii-lemma-partb} are met. We have seen in Section \ref{I2-lemma-proof-section} that Proposition \ref{Ii-lemma-partb} follows Lemmas \ref{I21-lemma} and \ref{I22-lemma}. Lemma \ref{I21-lemma} does not require any special assumptions on $\mathcal K$ and $\pmb{\varepsilon}$ beyond \eqref{block-def} and \eqref{epsilon-assumption}, whereas Lemma \ref{I22-lemma} requires $\mathcal K$ and $\pmb{\varepsilon}$ to be admissible with some exponent $\gamma > 1$, according to the definition on page \pageref{tT-condition}. In view of this, Lemma \ref{special-choices-lemma} is a consequence of the following result. Proving it is the main objective of this section. 
\begin{lemma} \label{K-epsilon-verification-lemma} 
The choice \eqref{choice-of-K-epsilon} of parameters $\mathcal K$ and $\pmb{\varepsilon}$ is admissible for every $\gamma > 1$. Consequently, Lemma \ref{special-choices-lemma} holds.  
\end{lemma}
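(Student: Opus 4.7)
The verification of the three bullet points defining admissibility for the explicit choice $K_\ell = K^{\ell\omega(\ell)}$, $\varepsilon_\ell = 1/\ell$ is the plan, with the first two being routine and essentially reducing to arithmetic on the exponents, while the third (the product inequality \eqref{tT-condition}) carries all the content. The condition $\varepsilon_\ell \to 0$ and $\sum \varepsilon_\ell = \infty$ is the harmonic series, and the growth condition $K_\ell \geq 10 K_{\ell-1}$ follows once one notes that
\[
\ell\omega(\ell) - (\ell-1)\omega(\ell-1) = \omega(\ell) + (\ell-1)\bigl(\omega(\ell) - \omega(\ell-1)\bigr) \geq \omega(\ell) \geq 1
\]
for $\ell \geq 3$ (both summands being non-negative, since $\omega$ is non-decreasing). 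The secondary clause $\mathtt t < \mathtt T - 2$ of \eqref{choice-of-K} reduces, via $R \leq K_{\mathtt T}$ and $\sqrt R \leq K_{\mathtt t}$, to verifying the exponent inequality $(\mathtt T-3)\omega(\mathtt T-3) \geq \tfrac{1}{2}\mathtt T\omega(\mathtt T)$; using $\omega(\mathtt T-3) \geq \omega(\mathtt T) - 1$ and the fact that $\omega(\mathtt T) \to \infty$ as $R \to \infty$ (by \eqref{T-growth}), this holds for all large $R$.

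The core of the plan is to take logarithms in \eqref{tT-condition} and show
\[
\sum_{\ell = \mathtt t+1}^{\mathtt T - 1} \log \ell \;>\; \gamma \, \mathtt T\,\omega(\mathtt T)\, \log K
\qquad \text{for all large } R.
\]
The left-hand side I intend to estimate using Stirling, whence it equals $\mathtt T \log \mathtt T - \mathtt t \log \mathtt t - (\mathtt T - \mathtt t) + O(\log \mathtt T)$. The defining relations $K_{\mathtt t -1} < \sqrt R \leq K_{\mathtt t}$ and $K_{\mathtt T-1} < R \leq K_{\mathtt T}$ translate, after taking $\log$ base $K$, to the two-sided bound
\[
(\mathtt t -1)\omega(\mathtt t -1) \,<\, \tfrac{1}{2}\mathtt T\,\omega(\mathtt T),
\qquad \mathtt t\,\omega(\mathtt t) \,\geq\, \tfrac{1}{2}(\mathtt T-1)\,\omega(\mathtt T-1).
\]
Combined with monotonicity of $\omega$ and the observation that $\omega$ changes by at most $1$ between consecutive integers, these force $\mathtt t \leq \tfrac{1}{2}\mathtt T + O(1)$, and in particular $\mathtt T - \mathtt t \geq c \mathtt T$ for an absolute $c > 0$. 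Substituting into the Stirling estimate gives the lower bound $\sum_{\ell = \mathtt t+1}^{\mathtt T - 1}\log\ell \geq c' \mathtt T \log \mathtt T$, and the desired inequality reduces to
\[
c' \log \mathtt T \;>\; \gamma\, \omega(\mathtt T)\, \log K \;=\; \gamma \lfloor \sqrt{\log \mathtt T}\rfloor \log K,
\]
equivalently $\sqrt{\log \mathtt T} > (\gamma/c') \log K$. Since $K$ and $\gamma$ are fixed while $\mathtt T \to \infty$, this holds for all sufficiently large $R$, as needed.

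The main technical obstacle is the lower bound $\mathtt T - \mathtt t \geq c \mathtt T$, which I expect to require a short case split. When $\omega(\mathtt t) = \omega(\mathtt T)$, the two displayed inequalities immediately yield $\mathtt t \leq \mathtt T/2 + O(1)$. The delicate case is $\omega(\mathtt t) < \omega(\mathtt T)$, which can occur only when $\mathtt T$ lies near one of the jump points $L_m := \lceil e^{m^2}\rceil$ of $\omega$; here I would use $\mathtt t < L_{\omega(\mathtt T)}$ together with $\mathtt t \,\omega(\mathtt t) \geq \tfrac{1}{2}(\mathtt T-1)(\omega(\mathtt T)-1)$ to conclude that $\mathtt T \leq 2 L_{\omega(\mathtt T)}(1 + o(1))$, and then a direct estimate yields $\mathtt T - \mathtt t \geq \tfrac{1}{2}\mathtt T - O(1)$ in this sub-case as well. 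Once this is settled, assembling the three pieces gives admissibility with exponent $\gamma$ for every $\gamma > 1$, completing the proof of Lemma \ref{K-epsilon-verification-lemma} and thus of Lemma \ref{special-choices-lemma}.
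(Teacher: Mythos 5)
Your proposal is correct and reaches the same endpoint as the paper: reduce \eqref{tT-condition} to showing $\mathtt T - \mathtt t \gtrsim \mathtt T$, then observe that the resulting inequality is of the form $\sqrt{\log \mathtt T} > \text{const}$, which holds for all large $R$. The place where you deviate from the paper is the proof of the crucial bound $\mathtt T - \mathtt t \geq c\,\mathtt T$. The paper isolates this as Lemma \ref{special-tT-lemma} and proves it via an abstract ``slow growth'' lemma for $\omega(x) = \lfloor\sqrt{\log x}\rfloor$ (Lemma \ref{omega-lemma}: $\omega(\mathtt M x) \leq (1+\tau)\omega(x)$ for large $x$, packaged as Corollary \ref{Omega-cor} for $\Omega(x) = x\,\omega(x)$); feeding the two inequalities $2\Omega(\mathtt t - 1) < \Omega(\mathtt T)$ and $\Omega(\mathtt T - 1) < 2\Omega(\mathtt t)$ through this gives $2(\mathtt t - 1)(1 - \kappa) < \mathtt T < 2\mathtt t + 1$ uniformly, with no case analysis. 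You instead propose a direct case split on whether $\omega$ has a jump in $[\mathtt t, \mathtt T]$, handling the jump-free case by dividing through and the jumping case by locating $\mathtt t$ and $\mathtt T$ relative to the jump point $L_{\omega(\mathtt T)}$. Both routes work; the paper's abstract lemma is a little cleaner and avoids the delicate near-jump bookkeeping, while your version is more hands-on and makes the dependence on the specific shape of $\omega$ explicit. Two minor cosmetic differences worth noting: you invoke Stirling for $\sum_{\ell=\mathtt t + 1}^{\mathtt T-1}\log\ell$, where the paper uses the cruder (and sufficient) bound $\prod \varepsilon_\ell < \mathtt t^{-(\mathtt T - \mathtt t - 1)}$; and the paper derives the secondary clause $\mathtt t < \mathtt T - 2$ directly from $\mathtt T - \mathtt t \geq c_0\mathtt T > 2$, whereas you give a separate exponent computation for it, which is unnecessary once $\mathtt T - \mathtt t \geq c\,\mathtt T$ is in hand.
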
 
\subsection{A function of slow growth}
\begin{lemma} \label{omega-lemma} 
The non-decreasing function 
\begin{equation}  \omega(x) := \lfloor  \sqrt{\log x} \rfloor, \qquad x > 0 \label{def-omega}  \end{equation}  
obeys the following ``slow growth'' property. Namely, for every $\mathtt M \in (1, \infty)$ and every $\tau > 0$, there exists $\mathtt X = \mathtt X(\mathtt M, \tau)$ such that 
\begin{equation} \label{omega-inequalities} 
\omega(x) \leq \omega(\mathtt M x) \leq (1 + \tau) \omega(x) \quad \text{ for all } x \geq \mathtt X. 
\end{equation}  
\end{lemma}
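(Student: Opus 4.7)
The plan is to dispatch the left inequality $\omega(x) \le \omega(\mathtt M x)$ immediately. Since $\mathtt M > 1$, we have $\mathtt M x > x$, and the map $x \mapsto \sqrt{\log x}$ is non-decreasing on $(1,\infty)$, as is the floor function; composing gives monotonicity of $\omega$, so this inequality is trivial for all $x$ large enough that $\log x > 0$. All of the actual work therefore lies in the right inequality $\omega(\mathtt M x) \le (1+\tau)\omega(x)$.

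For the right inequality, my first step is to analyze the continuous surrogate $\sqrt{\log(\mathtt M x)} - \sqrt{\log x}$. Rationalizing,
\[
\sqrt{\log(\mathtt M x)} - \sqrt{\log x} \;=\; \frac{\log \mathtt M}{\sqrt{\log x + \log \mathtt M} + \sqrt{\log x}} \;\le\; \frac{\log \mathtt M}{2\sqrt{\log x}},
\]
which tends to $0$ as $x \to \infty$. Thus, for any preassigned $\epsilon \in (0,1)$ there exists $\mathtt X_1 = \mathtt X_1(\mathtt M, \epsilon)$ such that $\sqrt{\log(\mathtt M x)} \le \sqrt{\log x} + \epsilon$ for $x \ge \mathtt X_1$. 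Taking floors and using $\lfloor a + \epsilon \rfloor \le \lfloor a \rfloor + 1$ yields the additive bound
\[
\omega(\mathtt M x) \;\le\; \omega(x) + 1 \quad \text{for } x \ge \mathtt X_1.
\]

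The second step is to convert this additive bound into the desired multiplicative one, exploiting that $\omega(x) \to \infty$. Given $\tau > 0$, I will choose $\mathtt X_2 = \mathtt X_2(\tau)$ large enough that $\omega(x) \ge 1/\tau$ whenever $x \ge \mathtt X_2$; an explicit choice is $\mathtt X_2 := \exp((\lceil 1/\tau \rceil + 1)^2)$. Then for $x \ge \mathtt X := \max(\mathtt X_1, \mathtt X_2)$,
\[
\omega(\mathtt M x) \;\le\; \omega(x) + 1 \;\le\; \omega(x) + \tau\, \omega(x) \;=\; (1+\tau)\, \omega(x),
\]
which is \eqref{omega-inequalities}.

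There is no conceptual obstacle here; the only care needed is in bookkeeping the two thresholds $\mathtt X_1$ and $\mathtt X_2$ and in choosing $\epsilon$ small enough (any $\epsilon < 1$ works) so that the floor comparison gives an additive loss of at most $1$. The underlying reason the lemma is true is simply that $\omega$ grows so slowly—like $\sqrt{\log x}$—that multiplying its argument by a bounded constant alters $\omega$ by at most $1$ eventually, and this $O(1)$ difference is negligible compared to $\omega(x)$ itself once $x$ is large.
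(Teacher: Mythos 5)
Your proof is correct. Both inequalities are handled properly: the left one by monotonicity of $\omega$, and the right one via the rationalization trick to control $\sqrt{\log(\mathtt M x)} - \sqrt{\log x}$ and then the floor comparison $\lfloor a + \epsilon\rfloor \leq \lfloor a\rfloor + 1$ for $\epsilon \in (0,1)$.

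Your route is genuinely different from the paper's in its bookkeeping, though the two share the same underlying analytic fact. The paper attacks the ratio $\omega(\mathtt M x)/\omega(x)$ directly, writing
\[
\frac{\omega(\mathtt M x)}{\omega(x)} \leq \frac{\sqrt{\log(\mathtt M x)}}{\sqrt{\log x} - 1} = \frac{\sqrt{\log x + \log \mathtt M}}{\sqrt{\log x} - 1} \longrightarrow 1,
\]
using the sandwich $\sqrt{\log x} - 1 < \lfloor\sqrt{\log x}\rfloor \leq \sqrt{\log x}$ on numerator and denominator separately and letting the limit do the work. You instead first isolate the clean, $\tau$-free additive statement $\omega(\mathtt M x) \leq \omega(x) + 1$ for $x$ large (depending only on $\mathtt M$), and then convert it to the multiplicative bound by invoking $\omega(x) \to \infty$. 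Your two-step structure has a mild advantage: it makes explicit that multiplying the argument by a constant shifts $\omega$ by at most one unit eventually, a sharp intermediate fact that the paper's one-line ratio estimate leaves implicit. Your version is also the more quantitatively explicit of the two, since you give a concrete $\mathtt X_2(\tau)$. The paper's version is more compact. Both are sound.
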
 
\begin{proof}
The first inequality in \eqref{omega-inequalities} is trivial for any $N > 1$, since $\omega$ is a non-decreasing function. For the second inequality, we observe that 
\[ \frac{\omega(\mathtt M x)}{\omega(x)} \leq \frac{\sqrt{\log (\mathtt M x)}}{\sqrt{\log x}-1} \leq \frac{\sqrt{\log x + \log \mathtt M}}{\sqrt{\log x} - 1}  \rightarrow 1 \text{ as } x \rightarrow \infty, \]
which leads to the desired conclusion. 
\end{proof} 
\begin{corollary} \label{Omega-cor} 
Let $\Omega(x) := x \omega(x)$, where $\omega$ is any function on $(0,\infty)$ obeying \eqref{omega-inequalities}. Then for every $N \in (1, \infty)$ and $\kappa > 0$, the inequality
\begin{equation} \label{Omega-inequality}
\Omega \bigl(N(1 - \kappa)x \bigr) < N \Omega(x) \text{ holds for all sufficiently large $x > 0$.}
\end{equation}  
In particular, \eqref{Omega-inequality} holds for $\omega$ as in \eqref{def-omega}.
\end{corollary}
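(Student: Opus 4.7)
\textbf{Proof plan for Corollary \ref{Omega-cor}.} The plan is to reduce the claimed inequality to a statement directly about $\omega$, and then dispatch it by invoking the slow-growth property \eqref{omega-inequalities}. After dividing both sides of
\[ \Omega\bigl(N(1-\kappa)x\bigr) = N(1-\kappa)x\,\omega\bigl(N(1-\kappa)x\bigr) < Nx\,\omega(x) = N\Omega(x) \]
by $Nx > 0$, the inequality to be established is equivalent to
\[ (1-\kappa)\,\omega\bigl(N(1-\kappa)x\bigr) < \omega(x). \]
I intend to verify this in two cases depending on the size of $N(1-\kappa)$.

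First I would handle the easy case $N(1-\kappa) \le 1$: since $\omega$ is non-decreasing, $\omega(N(1-\kappa)x) \le \omega(x)$, and then $(1-\kappa)\omega(N(1-\kappa)x) \le (1-\kappa)\omega(x) < \omega(x)$ because $\kappa > 0$ and $\omega(x) > 0$ for $x$ large. This case requires no largeness of $x$ beyond ensuring $\omega(x) > 0$.

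The main case is $N(1-\kappa) > 1$, which is where the slow-growth property is used. Choose a constant $\tau \in (0,\infty)$ small enough that $(1-\kappa)(1+\tau) < 1$; explicitly, any $\tau < \kappa/(1-\kappa)$ works. Applying \eqref{omega-inequalities} with $\mathtt M := N(1-\kappa) > 1$ and this $\tau$ produces a threshold $\mathtt X = \mathtt X(N(1-\kappa),\tau)$ such that
\[ \omega\bigl(N(1-\kappa)x\bigr) \le (1+\tau)\,\omega(x) \quad \text{for all } x \ge \mathtt X. \]
Multiplying by $(1-\kappa)$ then yields $(1-\kappa)\omega(N(1-\kappa)x) \le (1-\kappa)(1+\tau)\omega(x) < \omega(x)$, as required. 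The ``in particular'' clause is immediate: Lemma \ref{omega-lemma} has already verified that $\omega(x) = \lfloor\sqrt{\log x}\rfloor$ satisfies \eqref{omega-inequalities}, so the same argument applies.

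There is no significant obstacle here; the only subtlety is the split into the two cases $N(1-\kappa) \le 1$ versus $N(1-\kappa) > 1$, since Lemma \ref{omega-lemma} is stated only for $\mathtt M > 1$. The case $N(1-\kappa)\le 1$ is disposed of by monotonicity alone, and the main case is a one-line application of the slow-growth bound with a judicious choice of $\tau$.
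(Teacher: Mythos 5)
Your proof is correct and follows the same basic reduction as the paper: divide by $Nx$ to reduce to $\omega(N(1-\kappa)x)/\omega(x) < 1/(1-\kappa)$ and invoke Lemma \ref{omega-lemma} with $\mathtt M = N(1-\kappa)$. Two details you handle more carefully than the paper does. First, Lemma \ref{omega-lemma} is stated only for $\mathtt M > 1$, and since $N > 1$ and $\kappa > 0$ do not force $N(1-\kappa) > 1$, your split into the cases $N(1-\kappa) \leq 1$ (disposed of by monotonicity of $\omega$) and $N(1-\kappa) > 1$ is genuinely needed; the paper applies the lemma without comment. Second, the paper takes $\tau = \kappa/(1-\kappa)$ exactly, which from \eqref{omega-inequalities} yields only the non-strict bound $\omega(\mathtt M x)/\omega(x) \leq 1/(1-\kappa)$, whereas the Corollary asserts strict inequality; your choice of $\tau$ strictly below $\kappa/(1-\kappa)$, so that $(1-\kappa)(1+\tau) < 1$, cleanly delivers the strict inequality. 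In short, same approach, but your version closes two small gaps in the paper's proof.
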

\begin{proof}
The inequality \eqref{Omega-inequality} is the the same as
\[ \Omega \bigl(N(1 - \kappa) x\bigr) = N(1-\kappa)x \omega(N(1 - \kappa)x)  < Nx \omega(x) = N \Omega(x), \]  which simplifies to  
\[ \frac{\omega \bigl(N(1- \kappa)x \bigr)}{\omega(x)} < \frac{1}{1 - \kappa}. \]
Setting $\mathtt M = N(1 - \kappa)$ and $\tau = {\kappa}/{(1 - \kappa)}$ in Lemma \ref{omega-lemma} confirms that the above inequality holds for all sufficiently large $x$. 
\end{proof}

\subsection{Size of $\mathtt t$ and $\mathtt T$ for slow-growing $\omega$} 
\begin{lemma} \label{special-tT-lemma} 
Suppose that $K_{\ell}$ is given by \eqref{choice-of-K-epsilon}, and that for any $R \in \mathbb N$, the indices $\mathtt t$ and $\mathtt T$ are defined by the conditions \eqref{t&T}. Then for every $\kappa > 0$, 
\begin{equation} \label{tT-similar}
2 (\mathtt t - 1)(1 - \kappa) < \mathtt T < 2\mathtt t+1 \quad \text{ for all sufficiently large $R$.}
\end{equation} 
As a result, there exist absolute constants $0< c_0 < 1 < C_0$ such that 
\begin{equation} \label{T-t} 
c_0\mathtt T \leq \mathtt T-\mathtt t \leq C_0 \mathtt T \quad  \text{ for all } R \in \mathbb N. 
\end{equation} 
\end{lemma}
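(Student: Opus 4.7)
The plan is to reduce both assertions to statements about the function $\Omega(x) := x \omega(x)$. Taking logarithms in $K_\ell = K^{\ell \omega(\ell)}$ gives $\log K_\ell = \Omega(\ell) \log K$, so the conditions \eqref{t&T} translate to
\begin{equation}
\Omega(\mathtt t - 1) < \frac{\log R}{2 \log K} \leq \Omega(\mathtt t), \qquad \Omega(\mathtt T - 1) < \frac{\log R}{\log K} \leq \Omega(\mathtt T).
\end{equation}
Eliminating $\log R$ between the two pairs yields the only facts I would carry forward,
\begin{equation}
\Omega(\mathtt T - 1) < 2 \Omega(\mathtt t) \qquad \text{and} \qquad \Omega(\mathtt T) \geq 2 \Omega(\mathtt t - 1).
\end{equation}
The function $\Omega$ is strictly increasing on $\mathbb N$ beyond the initial regime where $\omega$ is identically zero, because $\omega$ is non-decreasing with integer values, so $\Omega(n+1) - \Omega(n) \geq \omega(n+1) \geq 1$ eventually; this lets me invert $\Omega$ at integer points.

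For the upper bound $\mathtt T < 2\mathtt t + 1$, monotonicity of $\omega$ immediately gives $\Omega(2\mathtt t) = 2\mathtt t\,\omega(2\mathtt t) \geq 2\mathtt t\,\omega(\mathtt t) = 2\Omega(\mathtt t) > \Omega(\mathtt T - 1)$, and inverting $\Omega$ yields $\mathtt T - 1 < 2\mathtt t$, i.e.\ $\mathtt T \leq 2\mathtt t$. The matching lower bound $2(1-\kappa)(\mathtt t - 1) < \mathtt T$ is precisely the type of inequality that Corollary \ref{Omega-cor} is designed to supply: apply it with $N = 2$ and the prescribed $\kappa$ at $x = \mathtt t - 1$ (which tends to infinity with $R$, as $\mathtt t \to \infty$ follows from the left half of the displayed inequalities above). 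One obtains $\Omega(2(1-\kappa)(\mathtt t - 1)) < 2\Omega(\mathtt t - 1) \leq \Omega(\mathtt T)$, and strict monotonicity of $\Omega$ concludes the proof of \eqref{tT-similar}.

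To derive \eqref{T-t}, the upper estimate is immediate: $\mathtt T - \mathtt t \leq \mathtt T - 1 < \mathtt T$, so $C_0 = 1$ works. For the lower estimate, rearranging $\mathtt T > 2(1-\kappa)(\mathtt t - 1)$ gives $\mathtt t < \mathtt T/[2(1-\kappa)] + 1$ and hence
\begin{equation}
\mathtt T - \mathtt t > \mathtt T \Bigl( 1 - \frac{1}{2(1-\kappa)} \Bigr) - 1.
\end{equation}
Choosing $\kappa = 1/4$ makes the bracket equal to $1/3$, furnishing $\mathtt T - \mathtt t \geq \mathtt T/4$ for all sufficiently large $R$. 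For the finitely many small values of $R$ not yet covered, the structural condition \eqref{choice-of-K} already gives $\mathtt T - \mathtt t \geq 3$, and any residual loss is absorbed by further shrinking $c_0$ so that the inequality holds uniformly over $R \in \mathbb N$.

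The main delicate point is pinning down the threshold at which Corollary \ref{Omega-cor} becomes applicable, and then ensuring the ratio $(\mathtt T - \mathtt t)/\mathtt T$ stays uniformly bounded below. Once one verifies $\mathtt t \to \infty$ as $R \to \infty$ (transparent from $\Omega(\mathtt t) \geq \log R/(2\log K)$ and $\Omega$ being bounded on bounded integer ranges), the asymptotic analysis carries the argument, and bookkeeping over the initial regime is the only residual task.
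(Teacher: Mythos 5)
Your proof is correct and follows essentially the same route as the paper: you translate the defining conditions \eqref{t&T} into the inequalities $2\Omega(\mathtt t - 1) < \Omega(\mathtt T)$ and $\Omega(\mathtt T-1) < 2\Omega(\mathtt t)$, then use monotonicity of $\omega$ for the upper bound $\mathtt T \leq 2\mathtt t$ and Corollary \ref{Omega-cor} for the lower bound, exactly as the paper does. Your explicit derivation of \eqref{T-t} (which the paper leaves to the reader) is a welcome addition; the only caveat is that \eqref{choice-of-K} guarantees $\mathtt t < \mathtt T - 2$ only for large $R$, so the small-$R$ bookkeeping you invoke really requires reading \eqref{T-t} as "for all sufficiently large $R$" (as the paper tacitly does), since for very small $R$ one has $\mathtt t = \mathtt T$ and the stated inequality cannot hold with any $c_0 > 0$.
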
 
\begin{proof} 
For any choice of $R \in \mathbb N$, the defining conditions of $\mathtt t$ and $\mathtt T$ given by \eqref{t&T} yield
\[ K_{\mathtt t-1}^2 < R \leq K_{\mathtt t}^2 \quad \text{ and } \quad K_{\mathtt T-1} < R \leq K_{\mathtt T}. \] 
Combining the two inequalities above leads to
\[
K_{\mathtt t-1}^2 = K^{2 \Omega(\mathtt t -1)} < K_{\mathtt T} = K^{\Omega(\mathtt T)} \quad \text{ and } \quad 
K_{\mathtt T-1} = K^{\Omega(\mathtt T-1)} < K_{\mathtt t}^2 = K^{2 \Omega(\mathtt t)}, 
\]
where $\Omega(x) := x \omega(x)$, with $\omega$ as in \eqref{def-omega}. In other words, 
\begin{equation}  \label{2-inequalities}
2 \Omega(\mathtt t -1) < \Omega(\mathtt T) \quad \text{ and } \quad \Omega(\mathtt T-1) < 2 \Omega(\mathtt t). 
\end{equation}  
The first inequality in \eqref{2-inequalities} combined with Corollary \ref{Omega-cor} says that for every $\kappa > 0$, 
\begin{equation}  \label{T-t-left}
\Omega(2(\mathtt t-1)(1 - \kappa)) \leq 2 \Omega(\mathtt t-1) < \Omega(\mathtt T). 
\end{equation} 
The second inequality in  \eqref{2-inequalities} gives, in view of the non-decreasing nature of $\omega$,  
\begin{equation} \label{T-t-right}  
\Omega(\mathtt T -1) < 2 \Omega(\mathtt t) = 2 \mathtt t \omega(\mathtt t)  \leq 2 \mathtt t \omega(2 \mathtt t) = \Omega(2\mathtt t). 
\end{equation}  
Since $\Omega$ is strictly increasing, the inequalities \eqref{T-t-left} and \eqref{T-t-right} lead to the conclusion \eqref{tT-similar}. The deduction of \eqref{T-t} from \eqref{tT-similar} is left to the reader. 
\end{proof} 
\subsection{Proof of Lemma \ref{K-epsilon-verification-lemma}}
Let us assume the parameters $K_{\ell}$ and $\varepsilon_{\ell}$ have been chosen according to \eqref{choice-of-K-epsilon}. In order to verify the admissibility criteria \eqref{tT-condition} on page \pageref{tT-condition}, let us observe that 
\[ \frac{K_{\ell}}{K_{\ell-1}} = K^{(\ell + 1) \omega(\ell+1) - \ell \omega(\ell)} \geq K \geq 10 \] 
if $K$ is chosen sufficiently large. By Lemma \ref{tT-similar}, we also obtain that $\mathtt T - \mathtt t \geq c_0 \mathtt T > 2$ for sufficiently large $R$. Thus both conditions in \eqref{choice-of-K} are verified. Further, the sequence $\varepsilon_{\ell} \rightarrow 0$ and has a divergent sum, confirming \eqref{epsilon-assumption}.  
\vskip0.1in
\noindent It remains to verify \eqref{tT-condition}. For this, we observe that   
\[ \prod_{\ell=\mathtt t+1}^{\mathtt T-1 } \varepsilon_{\ell} = \prod_{\ell=\mathtt t+1}^{\mathtt T-1 } \ell^{-1} < \mathtt t^{-(\mathtt T- \mathtt t - 1 )} \quad \text{ and } \quad K_{\mathtt T} \leq K^{\mathtt T \sqrt{\log \mathtt T}}, \]
the desired condition \eqref{tT-condition} would follow from the inequality 
\begin{equation} 
K^{\gamma \mathtt T \sqrt{\log \mathtt T}} \leq \mathtt t^{\mathtt T - \mathtt t - 1}, \quad \text{ or equivalently } \quad \gamma \mathtt T \sqrt{\log \mathtt T} \log K \leq (\mathtt T - \mathtt t - 1) \log \mathtt t. 
\end{equation}    
In view of the relations \eqref{tT-similar}, \eqref{T-t} between $\mathtt t$ and $\mathtt T$ established in Lemma \ref{special-tT-lemma}, the last estimate is a consequence  of
\[\gamma \mathtt T \sqrt{\log \mathtt T} \log K \leq \frac{c_0}{2}  \mathtt T \log \bigl((\mathtt T-1)/{2} \bigr), \quad \text{ i.e. } \quad \gamma \log K \sqrt{\log \mathtt T} \leq \frac{c_0}{2} \log \bigl((\mathtt T-1)/{2} \bigr). \]
For any $\gamma > 1$, the above inequality holds for all sufficiently large $\mathtt T$, completing the proof.  
\qed

\Addresses


\begin{thebibliography}{99}
\bibitem{Algom-Hertz-Wang}
A.~Algom, F.~Rodriguez Hertz and Z.~Wang,  
{\em{Pointwise normality and Fourier decay for self-conformal measures}}, 
Adv.~Math. 393 (2021), Paper No.~108096, 72 pp. 

\bibitem{Bary}
N.~Bary, 
{\em{A treatise on trigonometric series. Vols. I, II,}}
Authorized translation by Margaret F. Mullins, Pergamon Press, 
The Macmillan Company, New York (1964).

\bibitem{Bluhm}
C.~Bluhm, 
{\em{Liouville numbers, Rajchman measures and small Cantor sets}}, 
Proc. Amer. Math.~Soc. 128 (2000), no.~9, 2637--2640.

\bibitem{b09}
\'{E}. Borel,
{\em{Les probabilit\'{e}s d\'{e}nombrables et leurs applications arithm\'{e}tiques}},
Rend. Circ. Math. Palermo 27 (1909), 247-271.

\bibitem{Bremont} 
J.~Br\'emont, 
{\em{Self-similar measures and the Rajchman property}}, 
Ann.~H.~Lebesgue 4 (2021), 973--1004. 
	
\bibitem{b12}	
Y. Bugeaud, 
{\em{Distribution modulo one and Diophantine approximation}},
Cambridge Tracts in Math., 193 Cambridge University Press, Cambridge, 2012.

\bibitem{Cantor} 
G.~Cantor, 
{\em{Gesammelte Abhandlungen mathematischen und philosophischen Inhalts}}, 
Reprint of the 1932 original, Springer, Berlin (1980).

\bibitem{c59}
J.~W.~S.~Cassels, 
{\em{On a problem of Steinhaus about normal numbers}},
Colloq. Math. 7(1959), 95-101.

\bibitem{del63}
H. Davenport, P. Erd\H{o}s and W. J. LeVeque, 
{\em{On Weyl's criterion for uniform distribution}},
Michigan Math. J. 10 (1963), 311--314.

\bibitem{d10}
R.~Durrett, 
{\em{Probability: Theory and Examples}},
Fourth edition, Camb. Ser. Stat. Probab. Math., 31
Cambridge University Press, Cambridge, 2010.  

\bibitem{Falconer-book}
K.~Falconer, 
{\em{Fractal geometry: mathematical foundations and applications}}, second edition, 
Wiley \& Sons, England (2003).  

\bibitem{Gao-Ma-Song-Zhang}
X.~Gao, J.~Ma, K.~Song, Y.~Zhang, 
{\em{On the Fourier transform of coin-tossing type measures}},
J. Math. Anal. Appl. 484, 123706, 14 (2020)

\bibitem{hw08}
G.~H.~Hardy and E.~M.~Wright,  
{\em{An Introduction to the Theory of Numbers}}, sixth edition, 
Oxford University Press, Oxford, 2008.

\bibitem{Harman-book} 
G.~Harman, 
{\em{Metric Number Theory}}, 
LMS Monographs New Series, vol.~18, Clarendon Press, Oxford (1998).

\bibitem{Kahane-1}
J.~P.~Kahane, 
{\em{La multiplication de Rajchman et les ensembles $U(\varepsilon)$ de Zygmund}} (French),
Studia Math. 149 (2002), no. 2, 191--196. 

\bibitem{Kahane-Salem-63} 
J.~P.~Kahane and R.~Salem,
{\em{Ensembles Parfaits et S\'eries Trigonometriques}}, 
Hermann, Paris (1963). 

\bibitem{Kahane-Salem-64} 
J.~P.~Kahane and R.~Salem, 
{\em{Distribution modulo 1 and sets of uniqueness}},
Bull. Amer. Math. Soc. 70 (1964), 259--261.

\bibitem{KL-book}
A.~Kechris and A.~Louveau,
{\em{Descriptive set theory and the structure of sets of uniqueness}}, 
London Math. Soc. Lecture Note Ser. 128 (1987), 
Cambridge University Press, Cambridge. 

\bibitem{Koksma-book}
J.~F.~Koksma, 
{\em{Diophantische Approximationen, Ergebnisse d. Math. u. ihrer
Grenzgebiete}}, vol. 4, Springer, Berlin (1936).

\bibitem{KN-book}
L.~Kuipers and H.~Niederreiter, 
{\em{Uniform distribution of sequences}}, 
Wiley-Interscience, New York (1974).

\bibitem{Li-Sahlsten}
J.~Li and T.~Sahlsten,
{\em{Trigonometric series and self-similar sets}},
J. Eur. Math. Soc. 24 (2022), no. 1, 341--368.

\bibitem{Lyons-thesis}
R.~Lyons, 
{\em{A characterization of measures whose Fourier-Stieltjes transforms vanish at infinity}}, 
Ph.D. Thesis, Univ. of Michigan (1983). 

\bibitem{Lyons-85}
R.~Lyons,
{\em{Fourier-Stieltjes coefficients and asymptotic distribution modulo 1}}, 
Ann. of Math., 122 (1985), 155-170. 

\bibitem{l86}
R. Lyons,
{\em{The measure of nonnormal sets}},
Invent. Math. 83 (1986), 605--616.

\bibitem{Menshov}
D.~Menshov, 
{\em{Sur l’unicit\'e du d\'evelloppement trigonom\'etrique}}, 
CRASP 163 (1916), 433--436.

\bibitem{Niven-book}
I.~Niven, 
{\em{Irrational Numbers}}, 
The Carus Mathematical Monographs, no.~11, The Mathematical Association of America, 
Wiley and Sons, Inc., New York (1956). 

\bibitem{nz51}
I. Niven and H. S. Zuckerman,
{\em{On the definition of normal numbers}},
Pacific J. Math. 1 (1951), 103--109.

\bibitem{P-S1}
I.~I.~Piatetski-Shapiro,
{\em{On the problem of uniqueness of expansion of a function in a trigonometric series}} (Russian), 
Moscov.~Gos.~Univ.~Uc. Zap. 155, Mat.~5 (1952), 54--72. 

\bibitem{P-S2}
I.~I.~Piatetski-Shapiro,
{\em{Supplement to the work: ``On the problem of uniqueness of expansion of a function in a trigonometric series''}} (Russian), 
Moscov.~Gos.~Univ.~Uc. Zap. 165, Mat.~7 (1954), 79--97. 

\bibitem{p81}
A. D. Pollington,
{\em{The Hausdorff dimension of a set of normal numbers}},
Pacific J. Math. 95 (1981), 193-204.

\bibitem{PVZZ}
A.~Pollington, S.~Velani, A.~Zafeiropoulos, E.~Zorin, 
{\em{Inhomogeneous Diophantine approximation on $M_0$-sets with restricted denominators}}, 
Int. Math. Res. Not. (2022), no.~11, 8571--8643. 

\bibitem{PZ-2}
M.~Pramanik and J.~Zhang, 
{\em{Measures supported on partly normal numbers}}, preprint (2024).  

\bibitem{Rapaport}
A.~Rapaport, 
{\em{On the Rajchman property for self-similar measures on $\mathbb R^d$}}, 
Adv. Math. 403 (2022), Paper No. 108375, 53 pp.

\bibitem{Riemann} 
B.~Riemann, 
{\em{Habilitatsionschrift}}, 
Abh. der Ges. der Wiss. zu Gott. 13, 87–132 (1868)

\bibitem{Salem}
R.~Salem, 
{\em{Sets of uniqueness and sets of multiplicity}}, 
Trans. Amer. Math. Soc. 54 (1943), 218–228. 

\bibitem{Salem-book}
R.~Salem, 
{\em{Algebraic Numbers and Fourier Analysis}}, D.C. Heath and
Co., Boston, MA (1963).

\bibitem{Salem-Zygmund} 
R.~Salem, A.~Zygmund, 
{\em{Sur un th\'eor\`me de Piatetski-Shapiro}}, 
C.~R.~Acad.~Sci. Paris 240 (1955), 2040--2042.

\bibitem{s60}
W. M. Schmidt, 
{\em{On normal numbers}},
Pacific J. Math. 10 (1960), 661--72.

\bibitem{Varju-Yu}
P. Varj\'u and H. Yu, 
{\em{Fourier decay of self-similar measures and self-similar sets of uniqueness}}, 
Anal.~PDE 15 (2022), no.~3, 843--858.

\bibitem{w49}
D. D. Wall, 
{\em{Normal numbers}}, 
PhD thesis, University of California, Berkeley, CA, 1949. 

\bibitem{Young}
W.~Young, 
{\em{A note on trigonometrical series}}, 
Mess. Math. 38 (1909), 44--48. 


\bibitem{Zygmund}
A.~Zygmund, 
{\em{Trigonometric series}}, 2nd edition, 
Cambridge Univ.~Press, (1959).

\end{thebibliography}
\end{document}